\newtheorem{proposition}{Proposition}
\newtheorem{lemma}[proposition]{Lemma}
\newtheorem{corollary}[proposition]{Corollary}
\newtheorem{theorem}[proposition]{Theorem}
\newcommand{\GG}{{\mathcal G}}
\newcommand{\RR}{\mathbb{R}}
\def\mC{\mathcal{C}}
\def\KK{\mathcal{K}}
\def\cQ{\mathcal{Q}}
\definecolor{damjana}{rgb}{.8,.2,.2}
\def\II{\mathds I}
\def\CC{\mathcal C}
\def\FF{\mathcal F}
\begin{document}

\title[Spearman's footrule and Gini's gamma]{Spearman's footrule and Gini's gamma: Local bounds for bivariate copulas and the exact region with respect to Blomqvist's beta}

\author[D. Kokol Bukov\v sek]{Damjana Kokol Bukov\v{s}ek}
\address{School of Economics and Business, University of Ljubljana, and Institute of Mathematics, Physics and Mechanics, Ljubljana, Slovenia}
\email{damjana.kokol.bukovsek@ef.uni-lj.si}

\author[T. Ko\v sir]{Toma\v{z} Ko\v{s}ir}
\address{Faculty of Mathematics and Physics, University of Ljubljana, and Institute of Mathematics, Physics and Mechanics, Ljubljana, Slovenia}
\email{tomaz.kosir@fmf.uni-lj.si}

\author[B. Moj\v skerc]{Bla\v{z} Moj\v{s}kerc}
\address{School of Economics and Business, University of Ljubljana, and Institute of Mathematics, Physics and Mechanics, Ljubljana, Slovenia}
\email{blaz.mojskerc@ef.uni-lj.si}

\author[M. Omladi\v c]{Matja\v{z} Omladi\v{c}}
\address{Institute of Mathematics, Physics and Mechanics, Ljubljana, Slovenia}
\email{matjaz@omladic.net}

\begin{abstract}
Copulas are becoming an essential tool in analyzing data thus encouraging interest in related questions. In the early stage of exploratory data analysis, say, it is helpful to know local copula bounds with a fixed value of a given measure of association. These bounds have been computed for Spearman's rho, Kendall's tau, and Blomqvist's beta. The importance of another two measures of association, Spearman's footrule and Gini's gamma, has been reconfirmed recently. It is the main purpose of this paper to fill in the gap and present the mentioned local bounds for these two measures as well. It turns out that this is a quite non-trivial endeavor as the bounds are quasi-copulas that are not copulas for certain values of the two measures. We also give relations between these two measures of association and Blomqvist's beta.
\end{abstract}

\thanks{All four authors acknowledge financial support from the Slovenian Research Agency (research core funding No. P1-0222).\\ \date}
\keywords{Copula; dependence concepts; supremum and infimum of a set of copulas; measures of concordance; quasi-copula; local bounds}
\subjclass[2010]{Primary:     60E05; Secondary: 60E15, 62H20}

\maketitle

\section{ Introduction }

In the study of complex systems it is of interest to synthesize the information coming from different sources into a single output, which is either numerical or represented by a suitable function, graph, etc. For instance, the copula representation has proved to be a suitable tool to describe uncertain inputs in a probabilistic framework (see, e.g., \cite{DuSe,Nels}) as well as in an imprecise setting (see, e.g., \cite{DuSp,OmSt2}). When seeking a copula that fits given data best, a practitioner would perform what is called an \textit{Exploratory Data Analysis}. A possible way to do that would be to go through the following steps: (1) starting with a rank plot, (2) measuring association, (3) testing exchangeability, (4) testing for independence; and of course, performing any additional test necessary depending on the situation. The motivation is given in the overview paper by Genest and Favre \cite{GeFa}, cf. also\ \cite{GeNeRe,GeRe}.

Step (2) may be seen as one of the reasons why it has become so popular recently to study local Fr\'echet-Hoeffding bounds of families of copulas with mutually related measures of concordance. Some history of explorations connected to the kind of bounds can be found in \cite[Sections 3\&4]{KoBuKoMoOm2}. In \cite{OmSt3} theoretical aspects of local bounds are given, called there \textit{constrained bounds}, and studied in more details. In particular, the local bounds of sets of copulas having a fixed value of Spearman's rho, Kendall's tau, respectively Blomqvist's beta, have been worked out \cite{NQRU,NeUbFl}. On the other hand, the analogous question is still open for the Spearman's footrule and Gini's gamma, two measures of association whose importance has recently been brought up in \cite{GeNeGh}.

We denote by $\CC$ the set of all bivariate copulas and by $\II$ the interval $[0,1]\subseteq\RR$. Some transformations are naturally defined on $\CC$: 
The transpose of copula $C$ will be denoted by $C^t$, i.e., $C^t(u,v)= C(v,u)$. We denote by $C^{\sigma_1}$ and $C^{\sigma_2}$ the two reflections of a copula $C$ defined by $C^{\sigma_1}(u,v) =v-C(1-u,v)$ and $C^{\sigma_2}(u,v)=u-C(u,1-v)$ (see \cite[{\S}1.7.3]{DuSe}), and by $\widehat{C}=\left(C^{\sigma_1}\right)^{\sigma_2}$ the survival copula of $C$.

Several orders can be introduced on  $\CC$. Copula $C$ is preceding $D$ in the \textit{concordance order} if $C(u,v)\leqslant D(u,v)$ and $\widehat{C}(u,v)\leqslant \widehat{D}(u,v)$ for all $(u,v)\in\II^2$ {\cite[Definition 2.4]{Joe}. Copula $C$ is preceding $D$ in the \textit{pointwise order} if only $C(u,v)\leqslant D(u,v)$ for all $(u,v)\in\II^2$. (See \cite[Definition 2.8.1]{Nels} and \cite[\S{2.11}]{Joe} for further details.) The concordance order and the pointwise order coincide on the set of two-dimensional copulas. (See \cite[\S{2.2.1}]{Joe97} for a proof of this statement.) Hence, we will simply refer to them as the order, and write $C\leqslant D$ for $C,D\in\CC$ if $C(u,v)\leqslant D(u,v)$ for all $(u,v)\in\II^2$.
It is well known that $\CC$ is 
{a partially ordered set} with respect to the order{,  but not a lattice \cite[Theorem 2.1]{NeUbF2},} and that $W(u,v)=\max\{0,u+v-1\}$ and $M(u,v)=\min\{u,v\}$ are the lower and the upper bound of all copulas, respectively. Copulas $W$ and $M$ are called the \textit{Fr\'echet-Hoeffding lower and upper bound}, respectively. It was proved in \cite{NeUbF2} that the set of all bivariate quasi-copulas is a complete lattice that is order isomorphic to the Dedekind--MacNeille completion of the set of all bivariate copulas.

The paper is organized as follows. Sections 2 and 3 present preliminaries on measures of concordance and on local bounds. Local bounds of the set of all copulas corresponding to a fixed value of the Spearman's footrule $\phi\in[-\frac12,1]$ are given in Section 4. So, Theorem 5 is one of the two main results of the paper. The second main result is Theorem 7 presented in Section 5; the local bounds of the set of copulas that have the value of Gini's gamma $\gamma\in[-1, 1]$ fixed are determined there. Section 6 is devoted to comparison of these local bounds and Section 7 to the study of relations between the two measures of concordance and a third one, namely Blomqvist's beta.

\section{Preliminaries on measures of concordance}\label{sec:prelim}

A mapping $\kappa:\CC\to [-1,1]$ is called a \textit{measure of concordance} if it satisfies the following properties (see \cite[Definition 2.4.7]{DuSe}):
\begin{enumerate}[(C1)]
\item $\kappa(C)=\kappa(C^t)$ for every $C\in\CC$.
\item $\kappa(C)\leqslant\kappa(D)$ when $C\leqslant D$.  \label{monotone}
\item $\kappa(M)=1$.
\item $\kappa(C^{\sigma_1})=\kappa(C^{\sigma_2})=-\kappa(C)$.
\item If a sequence of copulas $C_n$, $n\in\mathbb{N}$, converges uniformly to $C\in\CC$, then $\lim_{n\to\infty}\kappa(C_n)=\kappa(C)$.
\end{enumerate}

We will refer to property (C\ref{monotone}) above simply by saying that a measure of concordance under consideration is \textit{monotone}.

{Certain properties that are sometimes stated in definitions of a measure of concordance follow from the properties listed above. Namely, a measure of concordance also satisfies the following (see \cite[\S{3}]{KoBuKoMoOm2} for further details):
\begin{enumerate}
\item[(C{6})] $\kappa(\Pi)=0$, where $\Pi$ is the independence copula $\Pi(u,v)=uv$. \label{kappa_Pi}
\item[(C{7})] $\kappa(W)=-1$.\label{kappaW}
\item[(C{8})] $\kappa(C)=\kappa(\widehat{C})$ for every $C\in\CC$.
\end{enumerate}

{Because of their significance in statistical analysis, measures of concordance and their relatives measures of association and measures of dependence are a classical topic of research. It was Scarsini \cite{Scar} who introduced formal axioms of a measure of concordance. Some of more recent references on bivariate measures of concordance include \cite{EdTa,FrNe,FuSch,Lieb,NQRU,NQRU2}. Their multivariate generalization was studied e.g. in \cite{BeDoUF2,DuFu,Tayl,UbFl}. For bivariate copulas the main measures of concordance are naturally studied through symmetries that are a consequence of properties of the \textit{concordance function} $\cQ$ (see for instance \cite{BeDoUF,EdMiTa,EdMiTa2}). } The concordance function is defined for a pair of random vectors $(X_1,Y_1)$ and $(X_2,Y_2)$. If the corresponding copulas are $C_1$ and $C_2$ and if the distribution functions of $X_1$, $X_2$, $Y_1$ and $Y_2$ are continuous, then we have
\begin{equation}\label{concordance}
  \cQ=\cQ(C_1,C_2)= 4 \int_{\II^2} C_2(u,v)dC_1(u,v) -1.
\end{equation}
(See \cite[Theorem 5.1.1]{Nels}.)
The concordance function was introduced by Kruskal \cite{Krus} and it has a number of useful properties \cite[Corollary 5.1.2]{Nels} and \cite[\S{3}]{KoBuKoMoOm2}.}} In the sequel, we use the following two:
\begin{enumerate}[(Q1)]
\item It remains unchanged when both copulas are replaced by their survival copulas: \\ {$\cQ\left(C_1,C_2\right)=\cQ\left(\widehat{C}_1,\widehat{C}_2\right)$.}
\item When both copulas are replaced by their reflected copulas the sign changes: \\ $\cQ\left(C_1^{\sigma_j},C_2^{\sigma_j}\right)=-\cQ\left(C_1,C_2\right)$ for $j=1,2$.
\end{enumerate}

The {four} most commonly used measures of concordance of a copula $C$ are Kendall's tau, Spearman's rho,
Gini's gamma and Blomqvist's beta. {We refer to \cite{Lieb} for an extended definition of a measure of concordance. If we replace Property (C4) by Property (C6) in the definition of a measure of concordance, we get what Liebscher in \cite{Lieb} calls a \textit{weak measure of concordance}. Spearman's footrule is an example of such a weak measure of concordance.} The range of a measure of concordance is the interval $[-1,1]$, while the range of Spearman's footrule is equal to $\left[-\frac12,1\right]$ (see \cite[\S4]{UbFl}). The sets of all copulas where the bounds $-\frac12$ and $1$ for Spearman's footrule are attained are given in \cite{FuMcC}, where also the generalization to multidimensional setting $d\geqslant 3$ is given.

To simplify the discussion from now on, we include Spearman's footrule when we talk about measures of concordance in general thus omitting the word 'weak'. In formal statements of our results we include the word 'weak' for precision.

Statistical significance of all five measures of concordance is already well established. See e.g. \cite{DiaGra,CoNi,GeNeGh,Nels1998,UbFl,SSQ} for Gini's gamma, Blomqvist's beta and Spearman's footrule. Nelsen \cite{Nels1998} discusses the $l_1$ nature of Gini's gamma and Spearman's footrule as compared to $l_2$ nature of Spearman's rho. Spearman's footrule depends only on $l_1$ distance of copula $C$ to the upper bound $M$, while Gini's gamma depends on $l_1$ distances to both bounds $W$ and $M$ \cite{Nels1998}.
All three measures of concordance that we study are of degree one \cite{EdTa}.
\\

In this paper, we focus on Spearman's footrule, Gini's gamma, and, in the last section, on their relation with Blomqvist's beta. The first two of them may be defined in terms of the concordance function $\cQ$. The \textit{Spearman's footrule} is defined by
\begin{equation}\label{phi}
\phi(C)=
\frac12\left(3\cQ(C,M)-1\right)=6\int_0^1 C(t,t) dt - 2
\end{equation}
and \textit{Gini's gamma} by
\begin{equation}\label{gamma}
\gamma(C)=\cQ(C,M)+\cQ(C,W) = 4\int_0^1 \left(C(t,t)+C(t,1-t)\right) dt - 2.
\end{equation}
On the other hand, \textit{Blomqvist's beta} is defined by
\begin{equation}\label{beta}
\beta(C)=
4\,C\left(\frac12,\frac12\right)-1.
\end{equation}
(See \cite[{\S}2.4]{DuSe} and \cite[Ch. 5]{Nels}.) Note that Gini's gamma and Blomqvist's beta are measures of concordance, so (C1)-(C8) hold for them. On the other hand, only Properties (C4) and (C7) do not hold for Spearman's footrule. Property (C8) holds for it since
\begin{equation}\label{C8 for phi}
\phi(C)=\frac12\left(3\cQ(C,M)-1\right)=\frac12\left(3\cQ(\widehat{C},\widehat{M})-1\right)=
\frac12\left(3\cQ(\widehat{C},{M})-1\right)=\phi(\widehat{C}).
\end{equation}
Here we used (Q1) and the fact that $\widehat{M}=M$.

Gini's gamma and Spearman's footrule are related. In fact, Gini's gamma is a symmetrized version of Spearman's footrule \cite{Nels2004}, since
\begin{equation}\label{symm_gama}
\gamma(C)=\frac23\left(\phi(C)-\phi(C^{\sigma_i})\right)
\end{equation}
for either $i=1$ or $i=2$. To show \eqref{symm_gama} we use (Q2) and the fact that $W^{\sigma_i}=M$:
$$\gamma(C)=\cQ(C,M)+\cQ(C,W)=\cQ(C,M)-\cQ(C^{\sigma_i},M)=\frac23\left(\phi(C)-\phi(C^{\sigma_i})\right).$$

\section{Preliminaries on local bounds}\label{sec:local}

Besides the Fr\'{e}chet-Hoeffding upper and lower bound, which are global bounds for the ordered set of copulas, one often studies local bounds of certain subsets. Perhaps among the first known examples of the kind is given in Theorem 3.2.3 of Nelsen's book \cite{Nels} (cf.\ also \cite[Theorem 1]{NQRU}), where the bounds of the set of copulas $C\in\CC$ with $C(a,b)=d$ for fixed $a,b\in\II$ and $d\in[W(a,b),M(a,b)]$ are given. In general, if $\mathcal{C}_0$ is a set of copulas, we let
\begin{equation}\label{eq:inf:sup}
  \underline{C}=\inf\mathcal{C}_0\quad\text{and}\quad\overline{C} =\sup\mathcal{C}_0,
\end{equation}
where the infimum and the supremum are taken point-wise. In \cite{NQRU} the authors study the bounds for the set of copulas whose Kendall's tau equals a given number $t\in [-1,1]$ and for the set of copulas whose Spearman's rho equals a given number $t\in [-1,1]$. In both cases the bounds are copulas that do not belong to the set. Similar bounds for the set of copulas having a fixed value of Blomqvist's beta were found in \cite{NeUbFl}. In \cite{BBMNU14} the authors present the local bounds for the set of copulas having a fixed value of the degree of non-exchangeability. In all these cases the bounds are again copulas. We know that this is not true in general since the bound of a set of copulas may be a proper quasi-copula. This is true in several cases considered in our paper.\\

Suppose that $\kappa:\mathcal{C}\to[-1,1]$ is a given measure of concordance and that $k\in [-1,1]$ is a fixed value in the range of $\kappa$. Then we write
\begin{equation}\label{eq:kappa}
  \KK_{k}:=\{C\in\mC\,|\,\kappa(C)=k\}.
\end{equation}
We denote by $\underline{K}_{k}=\inf\mathcal{K}_{k}$ {and} $\overline{K}_{k} =\sup\mathcal{K}_{k}$ the lower and the upper bound of \eqref{eq:kappa}, respectively. The symmetries that the concordance function and measures of concordance possess imply symmetries on the bounds \cite{BeDoUF,EdMiTa,EdMiTa2}.

\begin{lemma}\label{lem:symm}
\textit{(a)} Suppose that $\kappa$ is at least a weak measure of concordance. Then the lower and the upper bounds  $\underline{K}_{k}$ {and} $\overline{K}_{k}$ are symmetric and radially symmetric: $$\underline{K}_{k}(a,b)=\underline{K}_{k}(b,a)\ \text{and}\ \underline{K}_{k}(a,b)=\widehat{\underline{K}}_{k}(a,b)$$
and
$$\overline{K}_{k}(a,b)=\overline{K}_{k}(b,a)\ \text{and}\ \overline{K}_{k}(a,b)=\widehat{\overline{K}}_{k}(a,b).$$

\textit{(b)} If $\kappa$ is a (proper) measure of concordance then also
$$\underline{K}_{k}^{\sigma_i}(a,b)=\overline{K}_{-k}(a,b)\ \text{and}\ \overline{K}_{k}^{\sigma_i}(a,b)={\underline{K}}_{\, -k}(a,b)$$
for $i=1,2.$
\end{lemma}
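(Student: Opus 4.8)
The plan is to exploit three natural involutions on $\CC$ --- transposition $C\mapsto C^t$, taking the survival copula $C\mapsto\widehat C$, and reflection $C\mapsto C^{\sigma_i}$ --- together with the fact that the infimum and supremum in \eqref{eq:inf:sup} are taken pointwise. The guiding principle is that each of these maps sends the value $C(u,v)$ to an affine function, with leading coefficient $\pm1$, of the value of $C$ at a point obtained by permuting or reflecting the coordinates. Consequently each map either commutes with both pointwise $\inf$ and $\sup$ (when it is order-preserving, coefficient $+1$, as for $C^t$ and $\widehat C$) or interchanges them (when it is order-reversing, coefficient $-1$, as for $C^{\sigma_i}$). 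I would record these three commutation facts first, since they are the mechanical heart of the argument; each is a one-line consequence of the defining formulas, e.g. $\widehat C(u,v)=u+v-1+C(1-u,1-v)$ gives $\widehat{\inf_\alpha C_\alpha}=\inf_\alpha\widehat{C_\alpha}$, while $C^{\sigma_1}(u,v)=v-C(1-u,v)$ gives $(\inf_\alpha C_\alpha)^{\sigma_1}=\sup_\alpha C_\alpha^{\sigma_1}$.

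For part (a) I would treat symmetry and radial symmetry separately. Transposition is an order-preserving involution commuting with pointwise $\inf$; by axiom (C1) it maps the level set $\KK_k$ of \eqref{eq:kappa} bijectively onto itself, so
$$\underline K_k^{\,t}=\inf_{C\in\KK_k}C^t=\inf_{D\in\KK_k}D=\underline K_k,$$
which is precisely $\underline K_k(a,b)=\underline K_k(b,a)$; the same computation with $\sup$ handles $\overline K_k$. Radial symmetry is identical in structure but uses the survival map: $C\mapsto\widehat C$ is again an order-preserving involution commuting with pointwise $\inf$, and by property (C8) it permutes $\KK_k$, whence $\widehat{\underline K_k}=\inf_{C\in\KK_k}\widehat C=\underline K_k$.

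For part (b) I would use reflection. Here $C\mapsto C^{\sigma_i}$ is order-reversing, so it interchanges pointwise $\inf$ and $\sup$; and by axiom (C4) --- available now because $\kappa$ is a proper measure of concordance --- it maps $\KK_k$ bijectively onto $\KK_{-k}$. Combining these,
$$\underline K_k^{\,\sigma_i}=\Big(\inf_{C\in\KK_k}C\Big)^{\sigma_i}=\sup_{C\in\KK_k}C^{\sigma_i}=\sup_{D\in\KK_{-k}}D=\overline K_{-k},$$
and the companion identity $\overline K_k^{\,\sigma_i}=\underline K_{-k}$ follows in the same way.

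The one genuinely delicate point --- the main obstacle --- is the use of (C8) in part (a). For a proper measure of concordance (C8) is automatic, since $\widehat C=(C^{\sigma_1})^{\sigma_2}$ gives $\kappa(\widehat C)=-\kappa(C^{\sigma_1})=\kappa(C)$ by applying (C4) twice; but part (a) is asserted already for a weak measure of concordance, for which (C4) fails. I would therefore flag that $\kappa(\widehat C)=\kappa(C)$ must be verified independently for the weak measures at hand. For Spearman's footrule this is exactly the content of \eqref{C8 for phi}, where $\phi(C)=\phi(\widehat C)$ was deduced from (Q1) and $\widehat M=M$. Once this and the three commutation facts are in place, the remainder is routine bookkeeping.
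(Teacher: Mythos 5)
Your proof is correct and takes essentially the same route as the paper: both arguments rest on the observations that transposition, the survival transform, and the reflections $\sigma_i$ map the level set $\KK_k$ bijectively onto $\KK_k$, $\KK_k$, and $\KK_{-k}$ respectively (via (C1), (C8), (C4)), and that the first two commute with the pointwise infimum and supremum while the reflections interchange them. Your caveat about (C8) for weak measures is also exactly the paper's treatment --- its proof of part (a) simply assumes (C1) and (C8), with (C8) for Spearman's footrule verified separately in \eqref{C8 for phi}.
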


\begin{proof}
Suppose that $\kappa$ satisfies Properties (C1) and (C8). Then for a copula $C\in\mC$ we have $C\in\KK_{k}$ if and only if $C^t\in\KK_k$, and $C\in\KK_{k}$ if and only if  $\widehat{C}\in\KK_{k}$. So we conclude that
\begin{equation*}
\underline{K}_{k}(a,b)=
\inf_{C\in \KK_{k}}C(b,a)\\
=\underline{K}_{k}(b,a)
\end{equation*}
and
\begin{equation*}
\underline{K}_{k}(a,b)=\inf_{C\in \KK_{k}}C(a,b)
=a+b-1+\inf_{C\in \KK_{k}}C(1-a,1-b)
=\widehat{\underline{K}}_{\, k}(a,b).
\end{equation*}
The equalities for the upper bound are proved analogously.

Suppose now that $\kappa$ satisfies property (C4). Then for each $i$ we have $C\in \underline{K}_{k}$ if and only if $C^{\sigma_i}\in \underline{K}_{\, -k}$. This implies that
\begin{equation*}
\underline{K}_{k}(a,b)=\inf_{C\in \KK_{k}}C(a,b)
=\inf_{C\in \KK_{-k}}C^{\sigma_1}(a,b)
=b-\sup_{C\in \KK_{-k}}C(1-a,b)
=\overline{K}_{-k}^{\sigma_1}(a,b).
\end{equation*}
The other equalities follow analogously.
\end{proof}

The notion \textit{maximal asymmetry function} was introduced in \cite[\S 2]{KoBuKoMoOm1} following the ideas of \cite{KlMe}, its value at a fixed point $(u, v)\in\II^2$ was computed as
\[
    d^*_\FF (u,v) = \sup_{C\in\FF} \{|C(u,v)-C(v,u)|\},
\]
where $\FF\subseteq\CC$ is an arbitrary family of copulas. If $\FF=\CC$, this supremum is attained since $\CC$ is a compact set by \cite[Theorem 1.7.7]{DuSe}. Klement and Mesiar \cite{KlMe} and Nelsen \cite{N} showed that
\begin{equation}\label{eq:kle mes}
  d_\CC^*(u,v)=\min\{u,v,1-u,1-v,|v-u|\}.
\end{equation}

In \cite{KoBuKoMoOm2}, extremal copulas where the asymmetry bounds are attained were introduced. Choose $(a,b)\in\II^2$ and a $c\in\II$ such that $0\leqslant c\leqslant d_\CC^*(a,b)$. Define $\CC_0$ to be the set of all $C$ such that
\begin{equation}\label{eq:asym_point}
  C(a,b)-C(b,a) = c.
\end{equation}
Note that this set is nonempty since the set $\CC$ is convex by
\cite[Theorem 1.4.5]{DuSe}. The local bounds $\underline{C}$ and $\overline{C}$ of this set were computed in \cite[Theorem 1]{KoBuKoMoOm2}

\begin{equation*}
    \underline{C}^{(a,b)}_{c}(u,v)= \max\{W(u,v),\min\{d_1,u-a+d_1,v-b+d_1,u+v-a-b+d_1\}\},
\end{equation*}
and
\begin{equation*}
    \overline{C}^{(a,b)}_{c}(u,v)=\min\{M(u,v),\max\{d_2,u-b+d_2,v-a+d_2,u+v-a-b+d_2\}\},
\end{equation*}
where
\begin{equation*}
d_1=W(a,b)+c,
{\text{ and }
d_2=M(a,b)-c}
\end{equation*}
for $0 \leqslant c \leqslant d^*_C(a,b)$. Observe that $c$ is small enough so that everywhere close to the boundary of the square $\II^2$ copula $W$ prevails in the definition of $\underline{C}^{(a,b)}_{c}$, and that copula $M$ prevails close to the boundary of the square $\II^2$ in the definition of $\overline{C}^{(a,b)}_{c}$.

Copulas $\underline{C}^{(a,b)}_{c}$ and $\overline{C}^{(a,b)}_{c}$ can be considered for any $c$ such that $0\leqslant c\leqslant\min\{a,b,1-a,1-b\}$, not necessarily $c \leqslant |b-a|$. It turns out that they are exactly the minimal and  the maximal copulas with the property $C(a,b) = d_1$ and $C(a, b) = d_2$, respectively \cite[Theorem 3.2.3]{Nels}.

Note that $\underline{C}^{(a,b)}_{c}$ and $\overline{C}^{(a,b)}_{c}$ are shuffles of $M$, compare \cite[{\S}3.2.3]{Nels} and  \cite[\S3.6]{DuSe} (cf.\ also \cite{N}), so they are automatically copulas. More precisely, as shuffles of $M$ they are rewritten as
\begin{equation}\label{eq:C shuffle}
  \begin{split}
     \underline{C}^{(a,b)}_{c} & =M(4,\{[0,a-d_1],[a-d_1,a],[a, 1-b+d_1], [1-b+d_1,1]\},(4,2,3,1),-1)\\
     \overline{C}^{(a,b)}_{c}  & =M(4,\{[0,d_2],[d_2,b],[b,a+b-d_2], [a+b-d_2,1]\},(1,3,2,4),1),
  \end{split}
\end{equation}
for $0 \leqslant c \leqslant min\{a, b, 1-a, 1-b\}$, where the last parameter on the righthand-side in the above expressions is a function $f:\{1,2,\ldots,n\}\to\{-1,1\}$ which is in the first line of Equation \eqref{eq:C shuffle} identically equal to $-1$ and in the second one identically equal to 1.


To compute the values of various measures of concordance of these copulas we need the values of the concordance function $\cQ$ introduced in Section \ref{sec:prelim} for various copulas such as $W$, $M$, and $\underline{C}^{(a,b)}_{c}$, respectively $\overline{C}^{(a,b)}_{c}$.

The following proposition is proved in \cite[Propositions 4\&5]{KoBuKoMoOm2}. It was also pointed out there that these results are symmetric with respect to the main diagonal and to the counter-diagonal \cite[Proposition 6]{KoBuKoMoOm2}.

\begin{proposition} \label{prop1}
Let $(a,b)\in\II^2$ and $0\leqslant c\leqslant\min\{a,b,1-a,1-b\}$. For copulas $\underline{C}^{(a,b)}_{c}$ and $\overline{C}^{(a,b)}_{c}$ it holds:
\begin{enumerate}[(a)]
\item $\cQ(M, \underline{C}^{(a,b)}_{c}) =$ \\  $=\left\{ \begin{array}{ll}
        0;                        & \text{if } b \geqslant d_1 + \frac12, \vspace{1mm}\\
        (2d_1+1-2b)^2;            & \text{if } \frac12(1+d_1) \leqslant b \leqslant d_1 + \frac12, a \leqslant b-d_1, \vspace{1mm}\\
        (1+d_1-a-b)(1+3d_1+a-3b); & \text{if } \frac12(1+d_1) \leqslant b \leqslant d_1 + \frac12, a \geqslant b-d_1, \vspace{1mm}\\
        d_1(2+3d_1-4b);           & \text{if } b \leqslant \frac12(1+d_1), a \leqslant b-d_1, \vspace{1mm}\\
        2d_1(1+d_1-a-b)-(a-b)^2;  & \text{if } d_1 \geqslant 2a-1, d_1 \geqslant 2b-1, d_1 \geqslant a-b,d_1 \geqslant b-a, \vspace{1mm}\\
        d_1(2+3d_1-4a);           & \text{if } a \leqslant \frac12(1+d_1), b \leqslant a-d_1, \vspace{1mm}\\
        (1+d_1-a-b)(1+3d_1-3a+b); & \text{if } \frac12(1+d_1) \leqslant a \leqslant d_1 + \frac12, b \geqslant a-d_1, \vspace{1mm}\\
        (2d_1+1-2a)^2;            & \text{if } \frac12(1+d_1) \leqslant a \leqslant d_1 + \frac12, b \leqslant a-d_1, \vspace{1mm}\\
                0;                        & \text{if } a \geqslant d_1 + \frac12,
                \end{array} \right.$
\item $\cQ(M, \overline{C}^{(a,b)}_{c}) = 1 - 4(a-d_2)(b-d_2),$
\item $\cQ(W, \underline{C}^{(a,b)}_{c}) = 4d_1(1-a-b+d_1)-1,$
\end{enumerate}
\end{proposition}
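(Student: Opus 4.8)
The plan is to reduce each of the three concordance functions to a one–dimensional integral of a section of the relevant copula, and then to integrate the resulting piecewise–linear function. The starting point is that the doubly stochastic measure $dM$ is concentrated on the main diagonal $u=v$ and $dW$ on the counter–diagonal $u+v=1$; hence $\int_{\II^2}C\,dM=\int_0^1 C(t,t)\,dt$ and $\int_{\II^2}C\,dW=\int_0^1 C(t,1-t)\,dt$. Substituting into the definition \eqref{concordance} gives, for every copula $C$,
\[
\cQ(M,C)=4\int_0^1 C(t,t)\,dt-1 \qquad\text{and}\qquad \cQ(W,C)=4\int_0^1 C(t,1-t)\,dt-1,
\]
consistently with \eqref{phi} and \eqref{gamma}. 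Thus (a) and (b) amount to integrating the diagonal sections $t\mapsto\underline{C}^{(a,b)}_{c}(t,t)$ and $t\mapsto\overline{C}^{(a,b)}_{c}(t,t)$, while (c) amounts to integrating the counter–diagonal section $t\mapsto\underline{C}^{(a,b)}_{c}(t,1-t)$.

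Next I would evaluate these sections from the min/max formulas for $\underline{C}^{(a,b)}_{c}$ and $\overline{C}^{(a,b)}_{c}$. Setting $u=v=t$ and factoring out $d_1$ (resp.\ $d_2$) leaves a term $\min\{0,t-a,t-b,2t-a-b\}$ (resp.\ its max analogue), which simplifies via the elementary identity $\min\{0,X,Y,X+Y\}=\min\{0,X\}+\min\{0,Y\}$ with $X=t-a$, $Y=t-b$ (and its dual for the max). This shows each section is a continuous piecewise–linear function with breakpoints at $t=\min\{a,b\}$ and $t=\max\{a,b\}$, together with the points where it meets $W(t,t)=\max\{0,2t-1\}$ or $M(t,t)=t$. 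The substitution $u=t$, $v=1-t$ treats the counter–diagonal section in (c), where $X=t-a$ and $Y=1-b-t$.

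For (b) and (c) the integral collapses to a single closed form. In (c) the section equals $\max\{0,\,d_1+\min\{0,t-a\}+\min\{0,1-b-t\}\}$, a symmetric trapezoidal bump of height $d_1$ whose integral is $d_1(1-a-b+d_1)$ (the plateau of length $1-a-b$ together with two half–ramps contributing $d_1$ to the effective width); multiplying by $4$ and subtracting $1$ yields (c). Part (b) is handled the same way, the diagonal section being clamped above by $M(t,t)=t$, and the product $(a-d_2)(b-d_2)$ emerging as the area between the diagonal sections of $M$ and of $\overline{C}^{(a,b)}_{c}$, since $\int_0^1 M(t,t)\,dt=\tfrac12$.

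The main obstacle is part (a). There the outer envelope is $\max\{W(t,t),g(t)\}$ with $W(t,t)=\max\{0,2t-1\}$ itself piecewise, so the points where the diagonal section of $\underline{C}^{(a,b)}_{c}$ detaches from and re-attaches to $W$ depend on the relative order of the breakpoints $\min\{a,b\}$, $\max\{a,b\}$, $\tfrac12$ and on the slopes governed by $d_1$. Each admissible ordering produces a different integrand, hence a different polynomial in $a,b,d_1$, which is exactly the source of the nine cases; the inequalities labelling them (such as $b\geqslant d_1+\tfrac12$ or $a\leqslant b-d_1$) are precisely the thresholds separating these orderings. I would control the bookkeeping by exploiting the symmetry of $\underline{C}^{(a,b)}_{c}$ under $(a,b)\mapsto(b,a)$, which pairs the cases and lets me treat only $a\leqslant b$ and then reflect, reducing the work to the genuinely distinct configurations and producing the nine listed expressions after integrating the piecewise–linear section on each.
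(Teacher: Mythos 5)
Your strategy is sound, and it is worth noting at the outset that the paper itself does \emph{not} prove Proposition \ref{prop1}: it defers entirely to \cite[Propositions 4\&5]{KoBuKoMoOm2}. So your write-up is a self-contained derivation where the paper gives only a citation. Your reduction is the natural mechanism behind such formulas: since $dM$ is concentrated on the diagonal and $dW$ on the counter-diagonal (each with uniform marginal), $\cQ(M,C)=4\int_0^1C(t,t)\,dt-1$ and $\cQ(W,C)=4\int_0^1C(t,1-t)\,dt-1$, and the identity $\min\{0,X,Y,X+Y\}=\min\{0,X\}+\min\{0,Y\}$ (with its max dual) turns the sections of $\underline{C}^{(a,b)}_{c}$ and $\overline{C}^{(a,b)}_{c}$ into explicit piecewise-linear functions. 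The parts you actually execute check out: for (b), with $a\leqslant b$ the area between the diagonal sections of $M$ and $\overline{C}^{(a,b)}_{c}$ is $\tfrac12(a-d_2)^2+(a-d_2)(b-a)+\tfrac12(a-d_2)^2=(a-d_2)(b-d_2)$, giving $\cQ(M,\overline{C}^{(a,b)}_{c})=1-4(a-d_2)(b-d_2)$; for (c), the counter-diagonal integral is $d_1(1-a-b+d_1)$, as you say.

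Two points need tightening. First, in (c) your description of the section as a trapezoidal bump of height $d_1$ with plateau of length $1-a-b$ is literally correct only when $a+b\leqslant 1$; when $a+b>1$ the plateau sits over $[1-b,a]$ at height $c=d_1-(a+b-1)$, and the integral evaluates to $c(a+b-1+c)$, which happens to equal $d_1(1-a-b+d_1)$ again --- so the stated formula survives, but this case must be treated, not glossed. Second, and more substantially, for (a) you describe the bookkeeping without carrying it out, and that case analysis \emph{is} the content of (a): the nine polynomial expressions and the inequalities delimiting them can only be confirmed by performing the integrations in each configuration of the breakpoints $\min\{a,b\}$, $\max\{a,b\}$, $\tfrac12$ and the crossings of $d_1+\min\{0,t-a\}+\min\{0,t-b\}$ with $\max\{0,2t-1\}$. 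Your structural claims are correct --- the transposition symmetry $\underline{C}^{(b,a)}_{c}=\bigl(\underline{C}^{(a,b)}_{c}\bigr)^t$ does pair the cases $1\leftrightarrow 9$, $2\leftrightarrow 8$, $3\leftrightarrow 7$, $4\leftrightarrow 6$ and leaves $5$ fixed, and, for instance, under $b\geqslant d_1+\tfrac12$ one verifies the inner function lies below the diagonal section of $W$ everywhere, whence $\cQ=0$ --- but as written the proposal is a correct plan for (a) rather than a proof of it, so the remaining integrations must be executed (or the result cited, as the paper does).
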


\section{Local bounds for Spearman's footrule}\label{sec:footrule}

In this section we compute local bounds of the set of all copulas corresponding to a fixed value of Spearman's footrule $\phi\in[-\frac12,1]$~:
\begin{equation}\label{eq:phi}
  \FF_{\phi}:=\{C\in\CC\,|\,\phi(C)=\phi\}.
\end{equation}
We choose a point $(a,b)$ in the interior of the unit square $\II^2$. We need to find the minimal and maximal value of $C(a,b)$ for all copulas in $\FF_{\phi}$.

Suppose now that $C\in \FF_{\phi}$ and $C(a,b)=d$. By \cite[Theorem 3.2.3]{Nels} it follows that
$$\underline{C}^{(a,b)}_{c_1} \leqslant C \leqslant \overline{C}^{(b,a)}_{c_2},$$
where $d=\underline{C}^{(a,b)}_{c_1}(a,b)=W(a,b)+c_1$ and $d=\overline{C}^{(b,a)}_{c_2}(a,b)=M(a,b)-c_2$, so that
\begin{equation}\label{c_i}
c_1=d-W(a,b)\ \text{and}\ c_2=M(a,b)-d.
\end{equation}
Here we prefer to view $c=c(d)$ as a function of $d=C(a,b)$. Note that $c$ takes values on $[c_1,c_2]$.
Since concordance functions are monotone it follows that
$$\underline{f}_{a,b}(d)\leqslant \phi(C)\leqslant \overline{f}_{a,b}(d),$$
where we write
\begin{equation}
\label{f-lower}\underline{f}_{a,b}(d)=\phi\left(\underline{C}^{(a,b)}_{c_1}\right)=\phi\left(\underline{C}^{(a,b)}_{d-W(a,b)}\right)
\end{equation}
and
\begin{equation}\label{f-upper}
\overline{f}_{a,b}(d)=\phi\left(\overline{C}^{(b,a)}_{c_2}\right)=\phi\left(\overline{C}^{(b,a)}_{M(a,b)-d}\right).
\end{equation}


\begin{theorem}\label{thm_phi}\label{thm_phi_low}
The pointwise infimum $\underline{F}_{\phi}$ 
of $\FF_{\phi}$ for $\phi\in[-\frac12,1]$ is given by
\begin{equation}\label{F_lower}
\underline{F}_{\phi}(a,b)=\left\{ \begin{array}{ll}
        \frac12\left(a+b-\sqrt{\frac{2}{3}(1-\phi)+(b-a)^2}\right);      & \text{if } b\notin\{0,1\}, \text{ and }\frac{1-\phi}{6b} \leqslant a \leqslant 1- \frac{1-\phi}{6(1-b)}, \vspace{1mm}\\
        W(a,b);            & \text{otherwise, }
        \end{array} \right.
\end{equation}
for any $(a,b)\in\II^2$.
\end{theorem}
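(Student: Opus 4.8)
The plan is to reduce the computation of $\underline{F}_{\phi}(a,b)=\inf_{C\in\FF_\phi}C(a,b)$ to finding the smallest value $d=C(a,b)$ that a copula with $\phi(C)=\phi$ can attain at the fixed point $(a,b)$. The excerpt already records that $\underline{f}_{a,b}(d)\leqslant\phi(C)\leqslant\overline{f}_{a,b}(d)$ whenever $C(a,b)=d$, with the two envelopes realized by the extremal copulas $\underline{C}^{(a,b)}_{c_1}$ and $\overline{C}^{(b,a)}_{c_2}$. Since the set of copulas with $C(a,b)=d$ is convex and $\phi$ is continuous by Property (C5), a convex-combination and intermediate-value argument shows that a value $d\in[W(a,b),M(a,b)]$ is attained by some $C\in\FF_\phi$ if and only if $\underline{f}_{a,b}(d)\leqslant\phi\leqslant\overline{f}_{a,b}(d)$. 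Hence $\underline{F}_{\phi}(a,b)$ is the minimal such $d$.

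First I would observe that both envelopes are non-decreasing in $d$: as $d$ grows the extremal copulas $\underline{C}^{(a,b)}_{c_1}$ and $\overline{C}^{(b,a)}_{c_2}$ increase pointwise, and $\phi$ is monotone. Consequently, as $d$ decreases the constraint $\underline{f}_{a,b}(d)\leqslant\phi$ can only become easier to satisfy, while $\phi\leqslant\overline{f}_{a,b}(d)$ is the binding one. This isolates the \emph{upper} envelope as the quantity governing the \emph{lower} bound on $C(a,b)$, and shows $\underline{F}_{\phi}(a,b)=\max\{W(a,b),\,d_-\}$, where $d_-$ is the smallest solution of $\overline{f}_{a,b}(d)=\phi$.

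Next I would compute $\overline{f}_{a,b}$ explicitly. Writing $\phi(C)=\tfrac12(3\cQ(C,M)-1)$ and applying Proposition~\ref{prop1}(b) after exchanging the roles of $a$ and $b$ (using the symmetry with respect to the main diagonal recorded before Proposition~\ref{prop1}), together with the fact that the relevant parameter equals $M(a,b)-c_2=d$, gives
\begin{equation*}
\overline{f}_{a,b}(d)=\phi\bigl(\overline{C}^{(b,a)}_{c_2}\bigr)=1-6(a-d)(b-d).
\end{equation*}
Solving $1-6(a-d)(b-d)=\phi$ is a quadratic in $d$ whose discriminant simplifies via $(a+b)^2-4ab=(b-a)^2$, and the smaller root is
\begin{equation*}
d_-=\tfrac12\Bigl(a+b-\sqrt{\tfrac{2}{3}(1-\phi)+(b-a)^2}\Bigr),
\end{equation*}
which is exactly the first branch of \eqref{F_lower}. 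One checks that $c_2=M(a,b)-d_-$ stays in the admissible range $[0,\min\{a,b,1-a,1-b\}]$, so Proposition~\ref{prop1}(b) indeed applies.

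Finally I would assemble the two branches by identifying when $d_-\geqslant W(a,b)$. A short computation shows that $d_-\geqslant 0$ is equivalent to $ab\geqslant\frac{1-\phi}{6}$, i.e. $a\geqslant\frac{1-\phi}{6b}$, while $d_-\geqslant a+b-1$ is equivalent to $(1-a)(1-b)\geqslant\frac{1-\phi}{6}$, i.e. $a\leqslant 1-\frac{1-\phi}{6(1-b)}$; imposing both gives $d_-\geqslant\max\{0,a+b-1\}=W(a,b)$, which is precisely the region where the quadratic branch is used. In the complementary region $d_-<W(a,b)$, the value $W(a,b)$ is itself attained by a suitable convex combination of the two extremal copulas at $d=W(a,b)$ (feasible because $\underline{f}_{a,b}$ is non-decreasing and $\FF_\phi$ is nonempty, which forces $\underline{f}_{a,b}(W(a,b))\leqslant\phi\leqslant\overline{f}_{a,b}(W(a,b))$), yielding the $W(a,b)$ branch; the degenerate cases $b\in\{0,1\}$ fall here as well. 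I expect the main obstacle to be the careful bookkeeping that the upper rather than the lower envelope controls the infimum and that the smaller root is the admissible one, rather than the algebra itself, which is routine once Proposition~\ref{prop1}(b) is in hand.
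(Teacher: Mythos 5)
Your proposal is correct and takes essentially the same route as the paper's proof: reduce to the one-parameter family of extremal copulas, compute $\overline{f}_{a,b}(d)=1-6(a-d)(b-d)$ from Proposition~\ref{prop1}(b), observe it is increasing in $d$ on $[W(a,b),M(a,b)]$, invert the quadratic taking the smaller root, and identify the region $ab\geqslant\frac{1-\phi}{6}$, $(1-a)(1-b)\geqslant\frac{1-\phi}{6}$ where that root dominates $W(a,b)$. Your explicit convexity/intermediate-value argument for attainability and the abstract pointwise-monotonicity argument for the envelopes are refinements of points the paper leaves implicit, not a different approach.
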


\begin{proof}
Proposition \ref{prop1}\textit{(b)} implies that function $\overline{f}_{a,b}$ of \eqref{f-upper} is given by
\begin{equation}\label{f_ab_dep_on_d_2}
\overline{f}_{a,b}(d)=\frac32\cQ\left(M,\overline{C}^{(b,a)}_{c_2}\right)-\frac12=1-6(a-d)(b-d).
\end{equation}
Since
\begin{equation}\label{d_two}
d=\overline{C}^{(b,a)}_{c_2}(a,b)
\end{equation}
it follows that $W(a,b)\leqslant d\leqslant M(a,b)$. For such values of $d$, the expression on the right-hand side of \eqref{f_ab_dep_on_d_2} is increasing in $d$ since its maximum is achieved at $\frac12(a+b)$ that is greater or equal to $M(a,b)$. Thus, the  minimal possible value of $\overline{f}_{a,b}(d)$ is achieved when $d=W(a,b)$. Then, we have
$$\overline{f}_{a,b}\left(W(a,b)\right)=\left\{ \begin{array}{ll}
        1-6ab;           & \text{if } a+b \leqslant 1, \vspace{1mm}\\
        1-6(1-a)(1-b);   & \text{if } a+b \geqslant  1.
        \end{array} \right.$$
For function $\overline{f}_{a,b}:[W(a,b), M(a,b)]\to [\overline{f}_{a,b}\left(W(a,b)\right),1]$ we need to find its inverse.
If for a given value $\phi \in \left[ -\frac12, 1 \right]$ it holds that $\phi \leqslant \overline{f}_{a,b}\left(W(a,b)\right)$, then we take $d=W(a,b)$.
Otherwise, we take the inverse of the expression \eqref{f_ab_dep_on_d_2}, i.e.
\begin{equation}\label{bound for d_2}
d = \frac12\left(a+b-\sqrt{\frac{2}{3}(1-\phi)+(b-a)^2}\right).
\end{equation}
The inequality $\phi \geqslant \overline{f}_{a,b}\left(W(a,b)\right)$ gives us the condition
$$b\notin\{0,1\}, \text{ and }\frac{1-\phi}{6b} \leqslant a \leqslant 1- \frac{1-\phi}{6(1-b)}.$$
We conclude that the required lower bound is given by \eqref{F_lower}.
\end{proof}

\begin{corollary}\label{cor_phi}
Suppose that $\underline{F}_{\phi}$ is the infimum given in Theorem \ref{thm_phi_low}. Then:
\begin{enumerate}[(i)]
 \item $\underline{F}_{\phi}$ is  a copula for every $\phi \in \left[ -\frac12, 1\right]$.
 \item We have $\underline{F}\,_{-\frac12}=W$ and $\underline{F}_{1}=M$, while for every $\phi \in \left( -\frac12, 1 \right)$ the copula $\underline{F}_{\phi}$ is different from Fr\'echet-Hoeffding lower and upper bounds $W$ and $M$. It has a singular component distributed on graphs of hyperbolas
     $$ab=\frac{1-\phi}{6}\ \text{ and }\ (1-a)(1-b)=\frac{1-\phi}{6}$$
     for $a\in [\frac12-\ell(\phi),\frac12+\ell(\phi)]$ and on the anti-diagonal $a+b=1$ for other $a\in\II$. Here, we write $\ell(\phi)=\frac16{\sqrt{3(1+2\phi)}}$.  The absolutely continuous part of  $\underline{F}_{\phi}$ is distributed inside the region enclosed by both hyperbolas. (See Figure \ref{fig Fspodaj}.)
 \item $\underline{F}_{\phi}$ is increasing in $\phi$ (in the concordance order).
 \item $\underline{F}_{\phi}$ is symmetric and radially symmetric: $\underline{F}_{\phi}(a,b)=\underline{F}_{\phi}(b,a)$ and $\underline{F}_{\phi}(a,b)=\widehat{\underline{F}}_{\phi}(a,b)$.
 \item For $\phi\in\left( -\frac12, 1 \right)$, copula $\underline{F}_{\phi}$ is not a member of $\FF_{\phi}$, but it holds that $\phi\left(\underline{F}_{\phi}\right)<\phi$. (See Figure \ref{phi(phi)}.)
 \end{enumerate}
 \end{corollary}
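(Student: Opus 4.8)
The plan is to treat parts (i) and (ii) together, since the cleanest route to proving that $\underline{F}_\phi$ is a copula is to exhibit its mass distribution explicitly and check that all masses are nonnegative and integrate to $1$. First I would dispose of the extreme values: at $\phi=1$ the admissibility inequalities in \eqref{F_lower} reduce to $0\leqslant a\leqslant 1$ and the radical collapses to $|b-a|$, giving $\underline{F}_1=M$; at $\phi=-\frac12$ one checks that the region cut out by $ab\geqslant\frac{1-\phi}{6}$ and $(1-a)(1-b)\geqslant\frac{1-\phi}{6}$ shrinks to the single point $(\frac12,\frac12)$, so $\underline{F}_{-1/2}=W$. For $\phi\in(-\frac12,1)$ I would identify the smooth region $R_\phi$: setting the smooth branch of \eqref{F_lower} equal to $W$ and squaring shows that the two pieces meet exactly along the hyperbolas $ab=\frac{1-\phi}{6}$ (on $\{a+b\leqslant1\}$) and $(1-a)(1-b)=\frac{1-\phi}{6}$ (on $\{a+b\geqslant1\}$); this proves continuity across the interface and names the curves of (ii). Intersecting the first hyperbola with the anti-diagonal $a+b=1$ yields the endpoints $a=\frac12\pm\ell(\phi)$, pinning down the interval $[\frac12-\ell(\phi),\frac12+\ell(\phi)]$.

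For the absolutely continuous part I would differentiate the smooth branch. Writing $S=\frac{2}{3}(1-\phi)+(b-a)^2$, a direct computation gives $\partial_a\partial_b\underline{F}_\phi=\frac{1}{2}\cdot\frac{\frac{2}{3}(1-\phi)}{S^{3/2}}$, which is strictly positive on $R_\phi$; this is the density of the continuous component, supported inside the region enclosed by the two hyperbolas. The remaining mass is singular and I would locate it by straddling the interface with thin rectangles: outside $R_\phi$ the copula equals $W$, which carries its mass on the anti-diagonal, while across each hyperbola the jump in the normal derivative deposits a nonnegative singular mass, with the crossover between the hyperbolic and anti-diagonal singular support occurring precisely at $a=\frac12\pm\ell(\phi)$. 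Confirming that these nonnegative masses together with the continuous density total $1$ establishes both the $2$-increasing property (hence (i)) and the structural description (ii).

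Part (iii) is immediate from the formula: the smooth branch is increasing in $\phi$ because decreasing $1-\phi$ shrinks the radical, and since $\underline{F}_\phi\geqslant W$ always while the admissible region grows with $\phi$, the pointwise inequality $\underline{F}_\phi\leqslant\underline{F}_{\phi'}$ for $\phi\leqslant\phi'$ follows by comparing the three cases (both $W$, both smooth, mixed); as the order coincides with the pointwise order, this is monotonicity in the concordance order. Part (iv) follows directly from Lemma \ref{lem:symm}(a), since $\phi$ is a weak measure of concordance satisfying (C1) and, by \eqref{C8 for phi}, (C8); alternatively one reads symmetry off \eqref{F_lower}, which depends only on $a+b$ and $(b-a)^2$, and checks radial symmetry by substituting $(1-a,1-b)$.

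For part (v) I would compute $\phi(\underline{F}_\phi)$ from \eqref{phi} using the diagonal values. Writing $r=\sqrt{\frac{1-\phi}{6}}\leqslant\frac12$, on the diagonal one has $\underline{F}_\phi(t,t)=t-r$ for $t\in[r,1-r]$ and $\underline{F}_\phi(t,t)=W(t,t)$ otherwise, whence $\int_0^1\underline{F}_\phi(t,t)\,dt=\frac12-r+r^2$ and therefore $\phi(\underline{F}_\phi)=2-\phi-\sqrt{6(1-\phi)}$. Setting $s=1-\phi>0$, the desired inequality $\phi(\underline{F}_\phi)<\phi$ is equivalent to $2s<\sqrt{6s}$, i.e. to $s<\frac32$, i.e. to $\phi>-\frac12$; this gives the strict inequality, hence $\underline{F}_\phi\notin\FF_\phi$. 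I expect the main obstacle to be the singular-mass bookkeeping in (ii): verifying nonnegativity of the mass laid down along the hyperbolas and checking that the total mass equals $1$, which is exactly what rules out $\underline{F}_\phi$ being only a proper quasi-copula.
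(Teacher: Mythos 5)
Your proposal is correct and takes essentially the same route as the paper: the same identification of the lens-shaped support between the two hyperbolas, the same absolutely continuous density $\tfrac12\cdot\tfrac{2}{3}(1-\phi)\bigl(\tfrac{2}{3}(1-\phi)+(b-a)^2\bigr)^{-3/2}$ (your factor $\tfrac12$ is in fact the correct one; the paper's display \eqref{F_lower-ab} drops it), the same singular components read off from derivative jumps, Lemma~\ref{lem:symm} for (iv), and the same diagonal integral yielding $\phi(\underline{F}_{\phi})=2-\phi-\sqrt{6(1-\phi)}$ for (v). The only packaging difference is that the paper certifies copula-hood by invoking the Durante--Jaworski criterion \cite[Theorem 2.1]{DuJa} applied to the Dini derivative, whereas you propose direct mass bookkeeping; the step you flag as the main obstacle does go through, since on the lower hyperbola the derivative jumps from $0$ to $\tfrac{b}{a+b}>0$ and on the upper one from $\tfrac{1-a}{2-a-b}$ to $1$, a jump of $\tfrac{1-b}{2-a-b}>0$, while total mass $1$ is automatic from the boundary conditions.
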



\begin{figure}[h]
            \includegraphics[width=5cm]{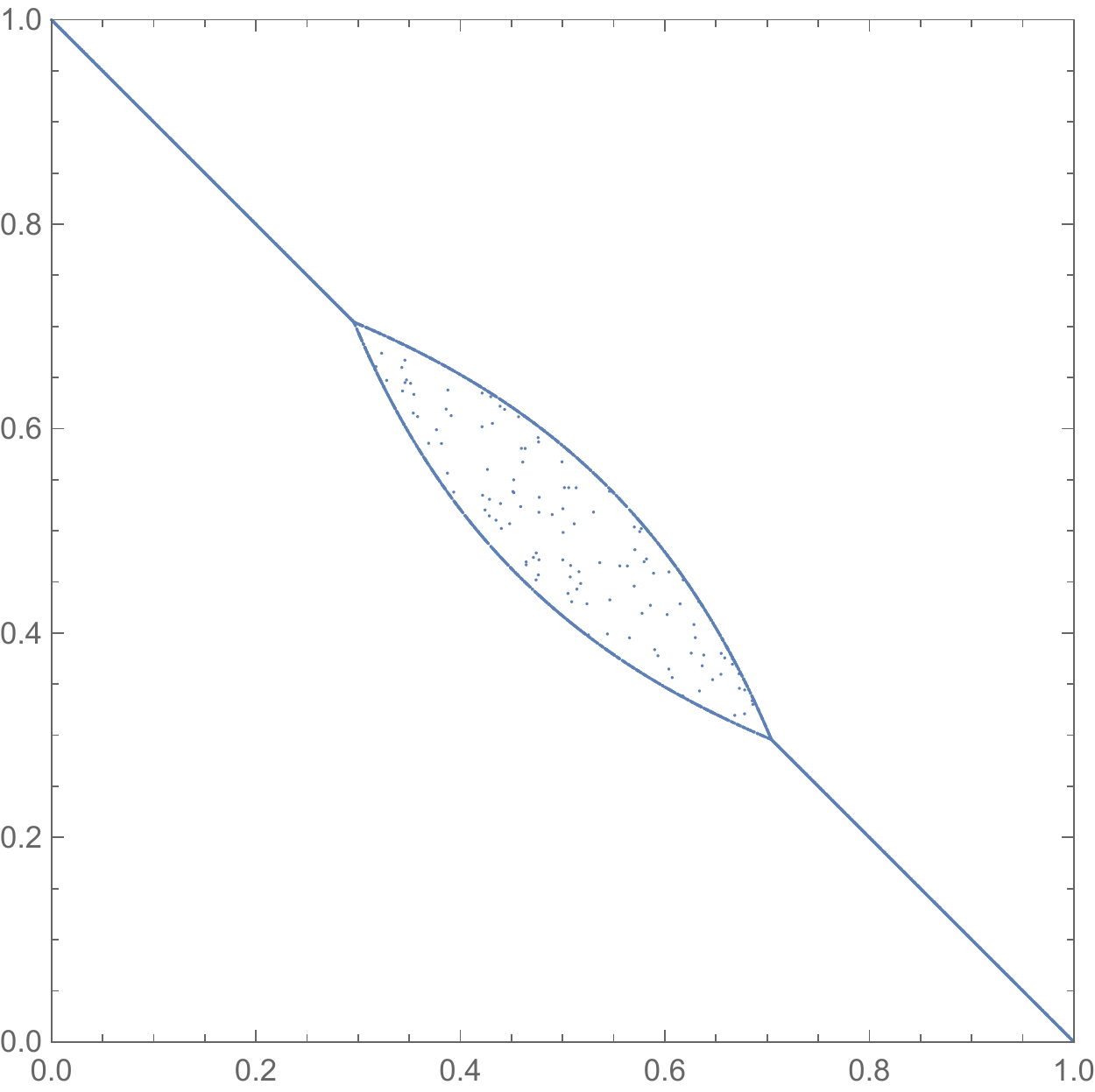} \hfil \includegraphics[width=5cm]{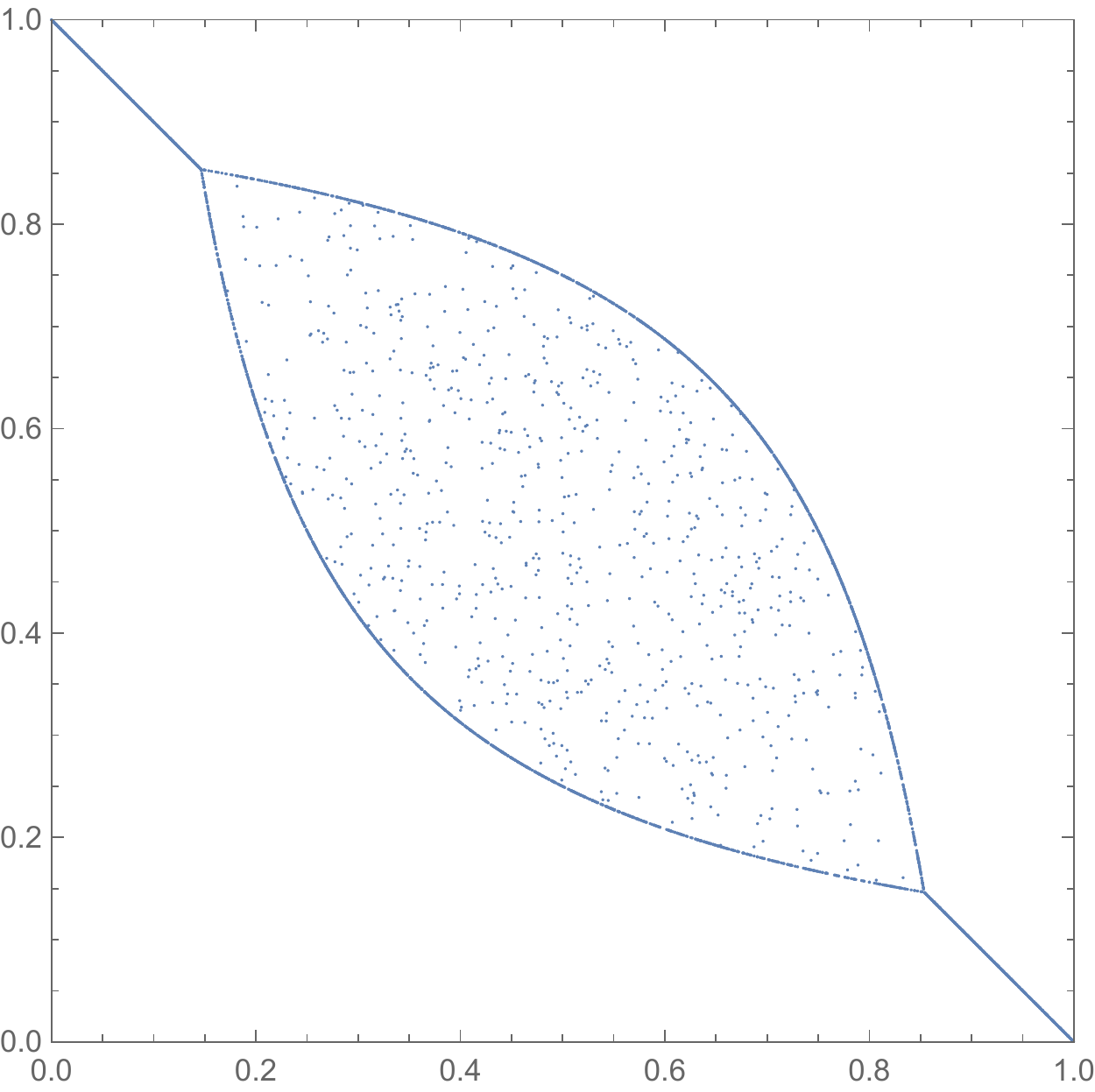} \\
						\includegraphics[width=5cm]{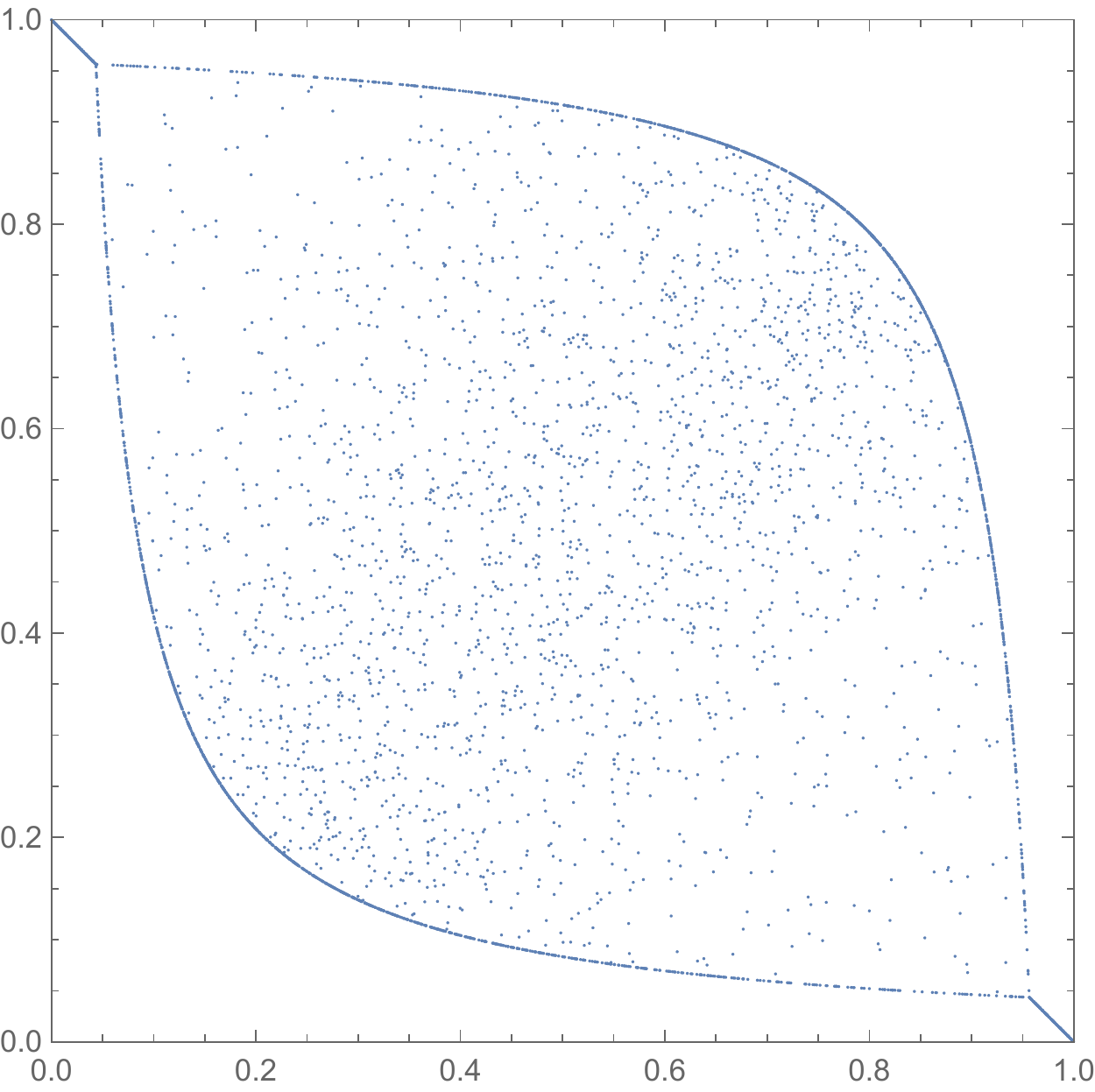} \hfil \includegraphics[width=5cm]{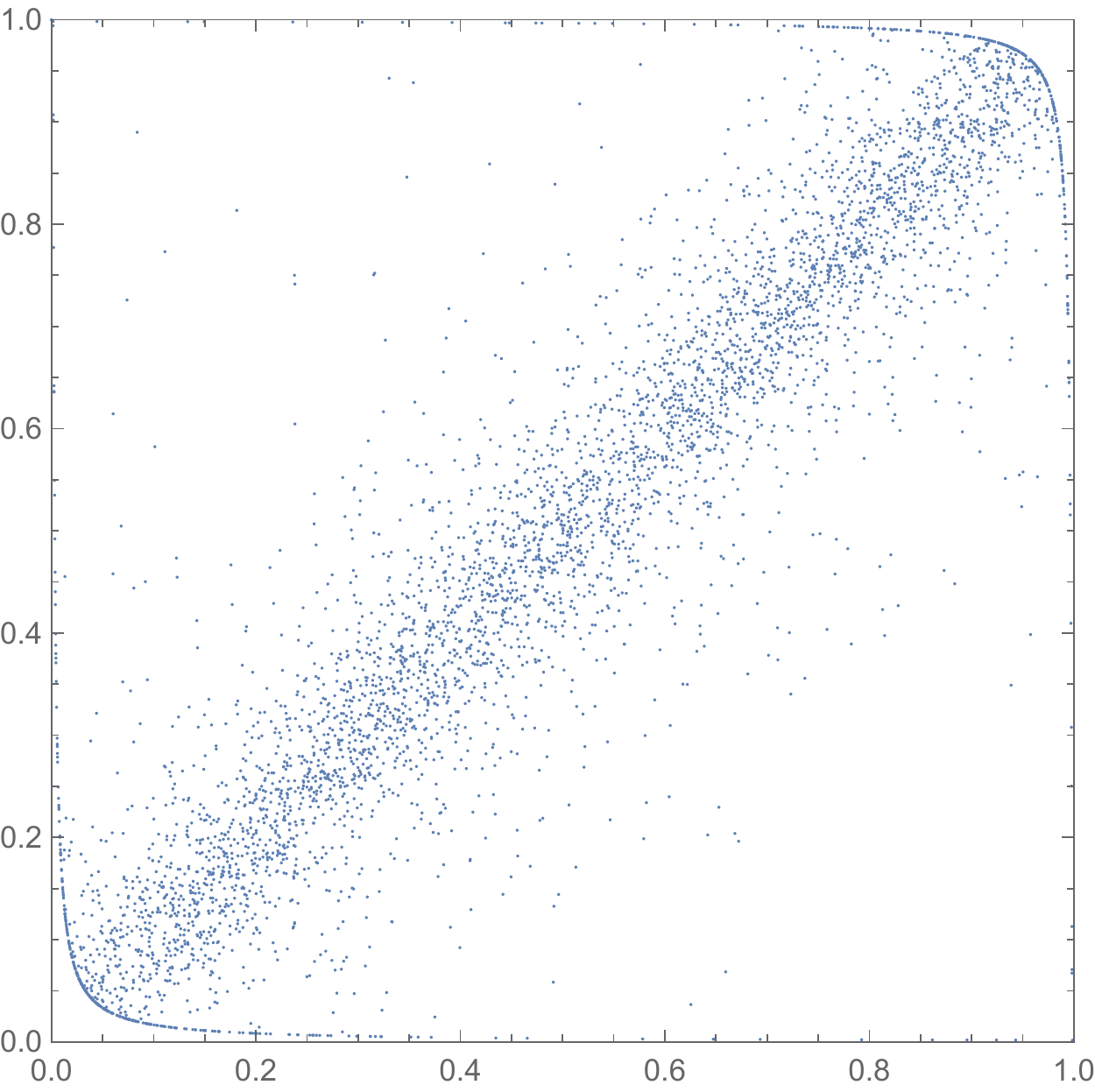}
            \caption{ Scatterplots of $\underline{F}_{\phi}$ for  $\phi = -\frac14, \frac14$, (first row), and $\phi= \frac34, 0.99$ (second row).} \label{fig Fspodaj}
\end{figure}

\begin{proof}
From \eqref{F_lower} we see that $\underline{F}\,_{-\frac12}=W$ and $\underline{F}_{1}=M$. Assume now that $\phi \in \left( -\frac12, 1 \right)$. To prove that $\underline{F}_{\phi}$ is a copula we use \cite[Theorem 2.1]{DuJa}. (See \cite{DuJa} for the definitions of Dini's derivatives as well.) For fixed $b$ the righthand side upper Dini derivative of $\underline{F}_{\phi}(a,b)$ is
\begin{equation}\label{F_lower-a}
D^+\underline{F}_{\phi}(a,b)=\left\{ \begin{array}{ll}
        0; & \text{if } b=0 \text{ or }b>0  \text{ and }a < \min\left\{1-b, \frac{1-\phi}{6b}\right\}, \vspace{1mm}\\
        \frac12\left(1+\frac{b-a}{\sqrt{\frac{2}{3}(1-\phi)+(b-a)^2}}\right);      & \text{if } b\notin\{0,1\}, \text{ and }\frac{1-\phi}{6b} \leqslant a < 1- \frac{1-\phi}{6(1-b)}, \vspace{1mm}\\
        1;            & \text{otherwise. }
        \end{array} \right.
\end{equation}
For $(a,b)\in\II^2$ such that $\frac{1-\phi}{6b} < a < 1- \frac{1-\phi}{6(1-b)}$, we have
\begin{equation}\label{F_lower-ab}
\frac{\partial^2}{\partial a\partial b}\underline{F}_{\phi}(a,b)=
        \frac{\frac{2}{3}(1-\phi)}{\left(\frac{2}{3}(1-\phi)+(b-a)^2\right)^{\frac32}}.
\end{equation}
Since the second derivative in \eqref{F_lower-ab} is positive and the Dini derivative in \eqref{F_lower-a} has a positive jump at points on the graphs of hyperbolas
     $$ab=\frac{1-\phi}{6}\text{ and }(1-a)(1-b)=\frac{1-\phi}{6}$$
     for $a\in [\frac12-\ell(\phi),\frac12+\ell(\phi)]$ or on the anti-diagonal $a+b=1$ for other $a\in\II$, it follows that statements in \textit{(i)} and \textit{(ii)} hold.

The derivative with respect to $\phi$ of the expression on the righthand side of \eqref{bound for d_2} is positive for $\phi \in \left( -\frac12, 1 \right)$. Thus \textit{(iii)} follows.

Statement \textit{(iv)} is a special case of Lemma \ref{lem:symm}.

Finally, we have $\phi(\underline{F}_{\phi})=6\int_0^1 \underline{F}_{\phi}(t,t)\, dt-2=2-\phi-\sqrt{6(1-\phi)}$, which is less then $\phi$ for $\phi\in\left(-\frac12,1\right)$. So, \textit{(v)} holds as well.
\end{proof}

The computation of the upper bound $\overline{F}_{\phi}$ is done using the opposite bounds to the ones that were used in the above proof. However, the computation is much more involved and it requires a careful analysis of several cases depending on $\phi$. We divide the unit square in several areas. With increasing value of $\phi$ their shapes evolve and they disappear one after another. (See Figure \ref{fig obmocja phi}.)

We define the areas in the unit square as
\begin{align*}
\Delta_{\phi}^1 = \bigg\{ (a,b)\in \II^2; \, & a \leqslant \frac12 \left( 1- \frac{\sqrt{3}}{3} \sqrt{1+2\phi} \right), \, b \geqslant  \frac12 \left( 1+ \frac{\sqrt{3}}{3} \sqrt{1+2\phi} \right),  \\ &   b \leqslant  a+\frac12 \left( 1- \frac{\sqrt{3}}{3} \sqrt{1+2\phi} \right) \bigg\} \\
\Delta_{\phi}^2 = \bigg\{ (a,b)\in \II^2; \, & b \leqslant  \frac12 \left( 1+ \frac{\sqrt{3}}{3} \sqrt{1+2\phi} \right), \\ & \frac13 \left( 2b-1 +\sqrt{(2b-1)^2+1+2\phi} \right) \leqslant a \leqslant \frac13 \left( b+1 -\sqrt{(2b-1)^2+1+2\phi} \right) \bigg\}
\\
\Delta_{\phi}^3 = \bigg\{ (a,b)\in \II^2; \, & a \geqslant \frac12 \left( 1- \frac{\sqrt{3}}{3} \sqrt{1+2\phi} \right), \\ & \frac13 \left( a+1 +\sqrt{(2a-1)^2+1+2\phi} \right) \leqslant b \leqslant \frac13 \left(2a+2 -\sqrt{(2a-1)^2+1+2\phi} \right) \bigg\} \\
\Delta_{\phi}^4 = \bigg\{ (a,b)\in \II^2; \, &  \frac13 \left(b+1 -\sqrt{(2b-1)^2+1+2\phi} \right) \leqslant a \leqslant \frac13 \left( b+1 +\sqrt{(2b-1)^2+1+2\phi} \right),\\
& \frac13 \left( a+1 -\sqrt{(2a-1)^2+1+2\phi} \right) \leqslant b \leqslant \frac13 \left(a+1 +\sqrt{(2a-1)^2+1+2\phi} \right), \\
& a \leqslant \sqrt{\frac23 (1-\phi)-(b-1)^2}, \, b \leqslant \sqrt{\frac23 (1-\phi)-(a-1)^2}  \bigg\} \\
\Delta_{\phi}^5 = \bigg\{ (a,b)\in \II^2; \, & b \geqslant \frac12 \left( 1- \frac{\sqrt{3}}{3} \sqrt{1+2\phi} \right), \\ & \frac13 \left( b+1 +\sqrt{(2b-1)^2+1+2\phi} \right) \leqslant a \leqslant \frac13 \left(2b+2 -\sqrt{(2b-1)^2+1+2\phi} \right) \bigg\}
\end{align*}
\begin{align*}
\Delta_{\phi}^6 = \bigg\{ (a,b)\in \II^2; \, & a \leqslant  \frac12 \left( 1+ \frac{\sqrt{3}}{3} \sqrt{1+2\phi} \right), \\ & \frac13 \left( 2a-1 +\sqrt{(2a-1)^2+1+2\phi} \right) \leqslant b \leqslant \frac13 \left( a+1 -\sqrt{(2a-1)^2+1+2\phi} \right) \bigg\}
\\
\Delta_{\phi}^7 = \bigg\{ (a,b)\in \II^2; \, & b \leqslant \frac12 \left( 1- \frac{\sqrt{3}}{3} \sqrt{1+2\phi} \right), \, a \geqslant  \frac12 \left( 1+ \frac{\sqrt{3}}{3} \sqrt{1+2\phi} \right),  \\ &   b \geqslant  a-\frac12 \left( 1- \frac{\sqrt{3}}{3} \sqrt{1+2\phi} \right) \bigg\}
\end{align*}

Notice that for $\phi$ close to $-\frac12$ all these areas are nonempty. When $\phi$ increases some of the areas vanish. More precisely, all the areas are nonempty for $\phi \in [-\frac12, \, -\frac13]$. For $\phi=-\frac12$ area $\Delta_\phi^4$ is reduced to the main diagonal. For $\phi \in \left( -\frac13,\, -\frac15 \right]$ only areas $\Delta_\phi^1$ and $\Delta_\phi^7$ are empty. For $\phi \in \left( -\frac15, \frac14 \right]$ only area $\Delta_\phi^4$ is nonempty. For $\phi \in \left( \frac14, 1\right]$ all the areas are empty.
In Figure \ref{fig obmocja phi} this dynamics is illustrated by the regionplots of the areas  for $\phi = -\frac12, -\frac25$ (first row), and $\phi= -\frac{32}{100}, 0$ (second row).

\begin{figure}[h]
            \includegraphics[width=5cm]{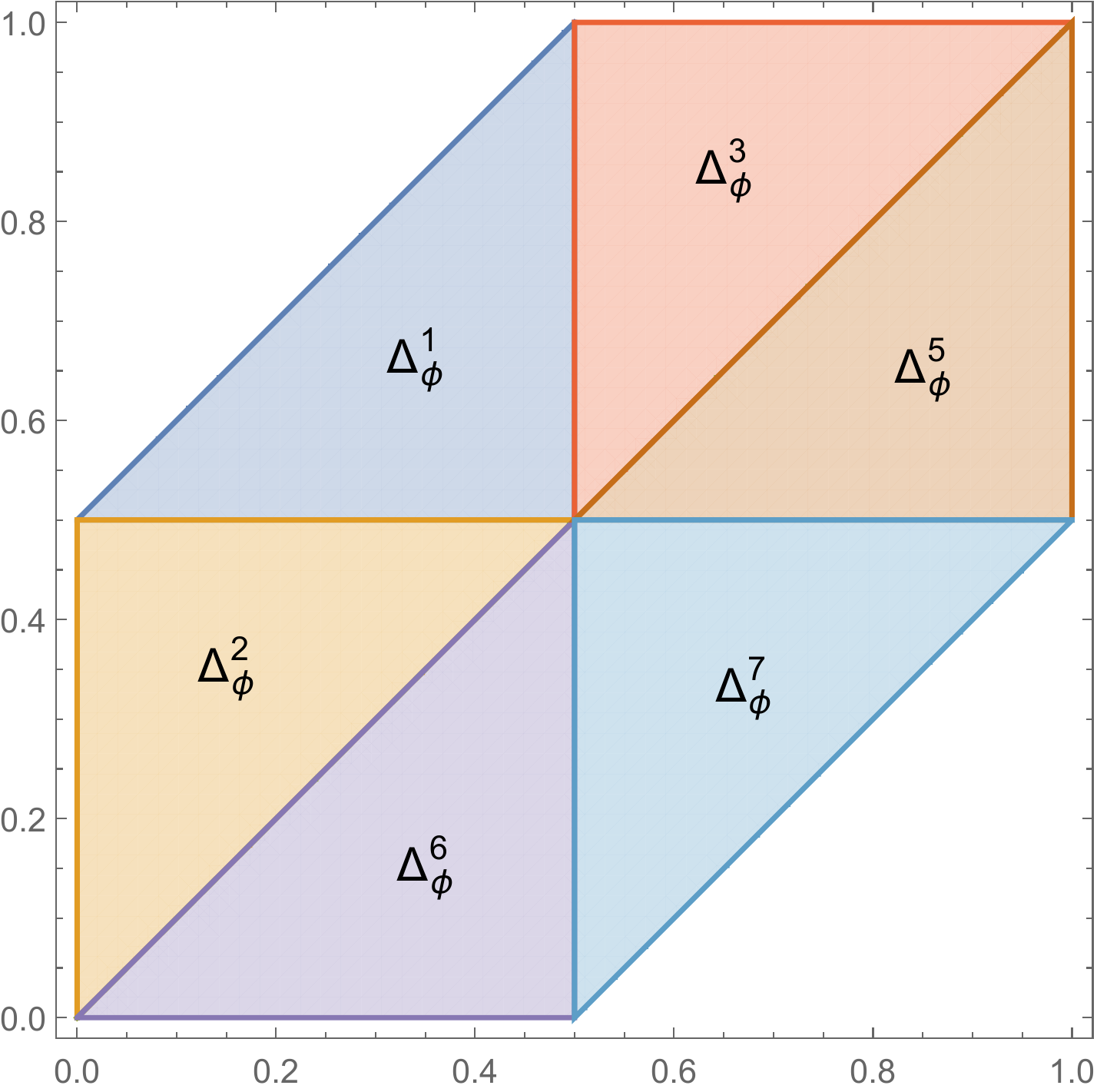} \hfil \includegraphics[width=5cm]{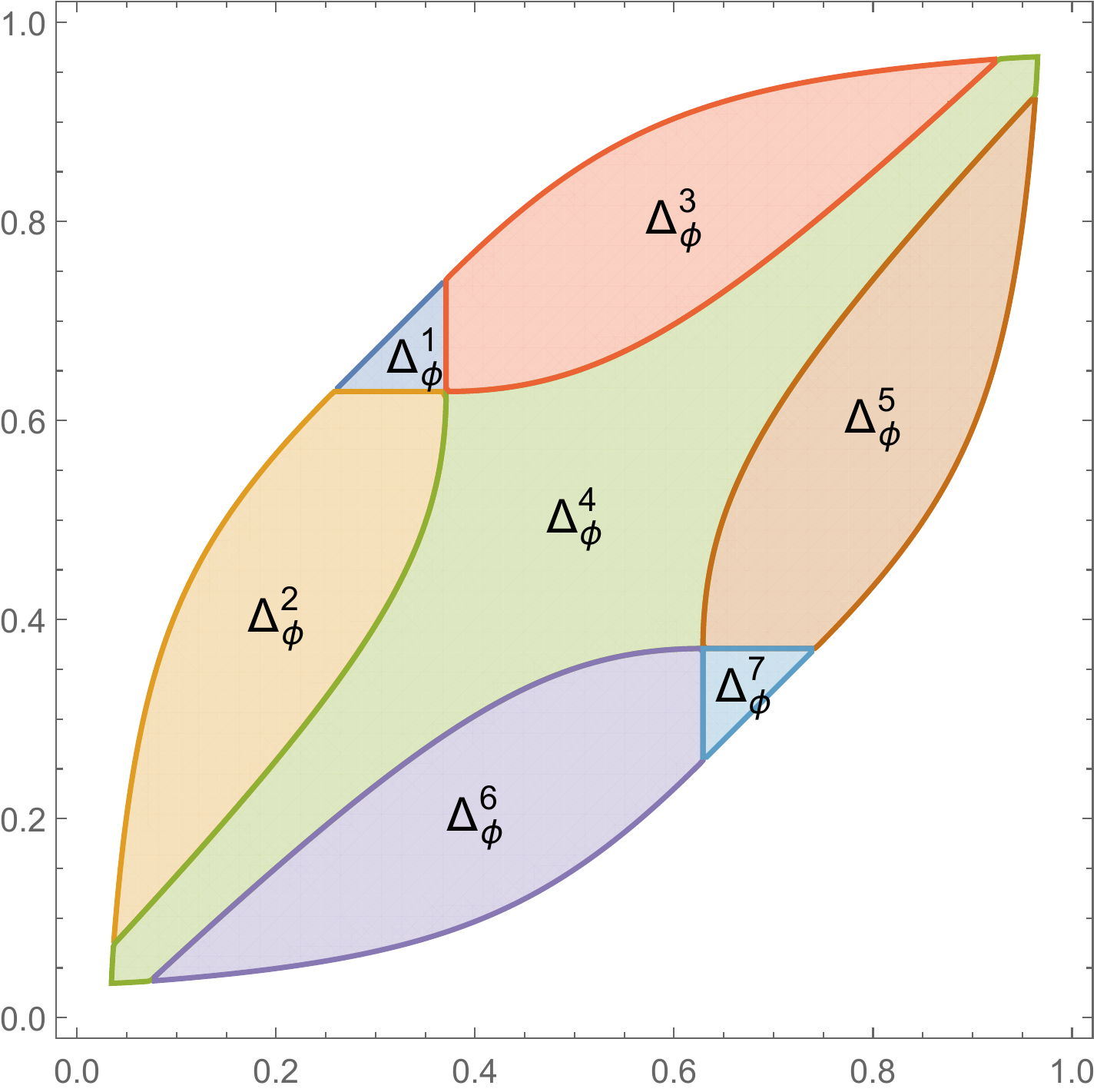} \\
						\includegraphics[width=5cm]{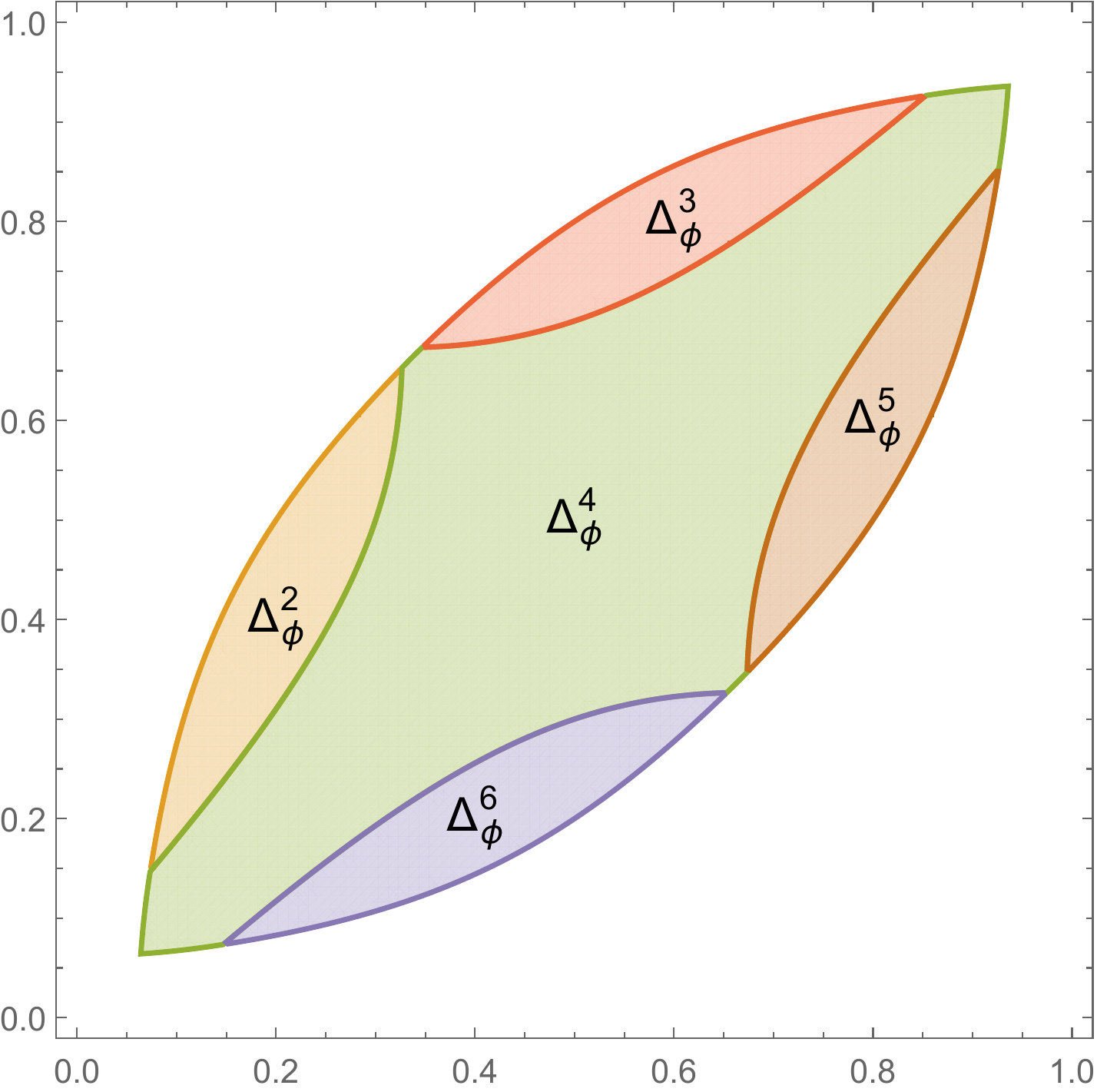} \hfil \includegraphics[width=5cm]{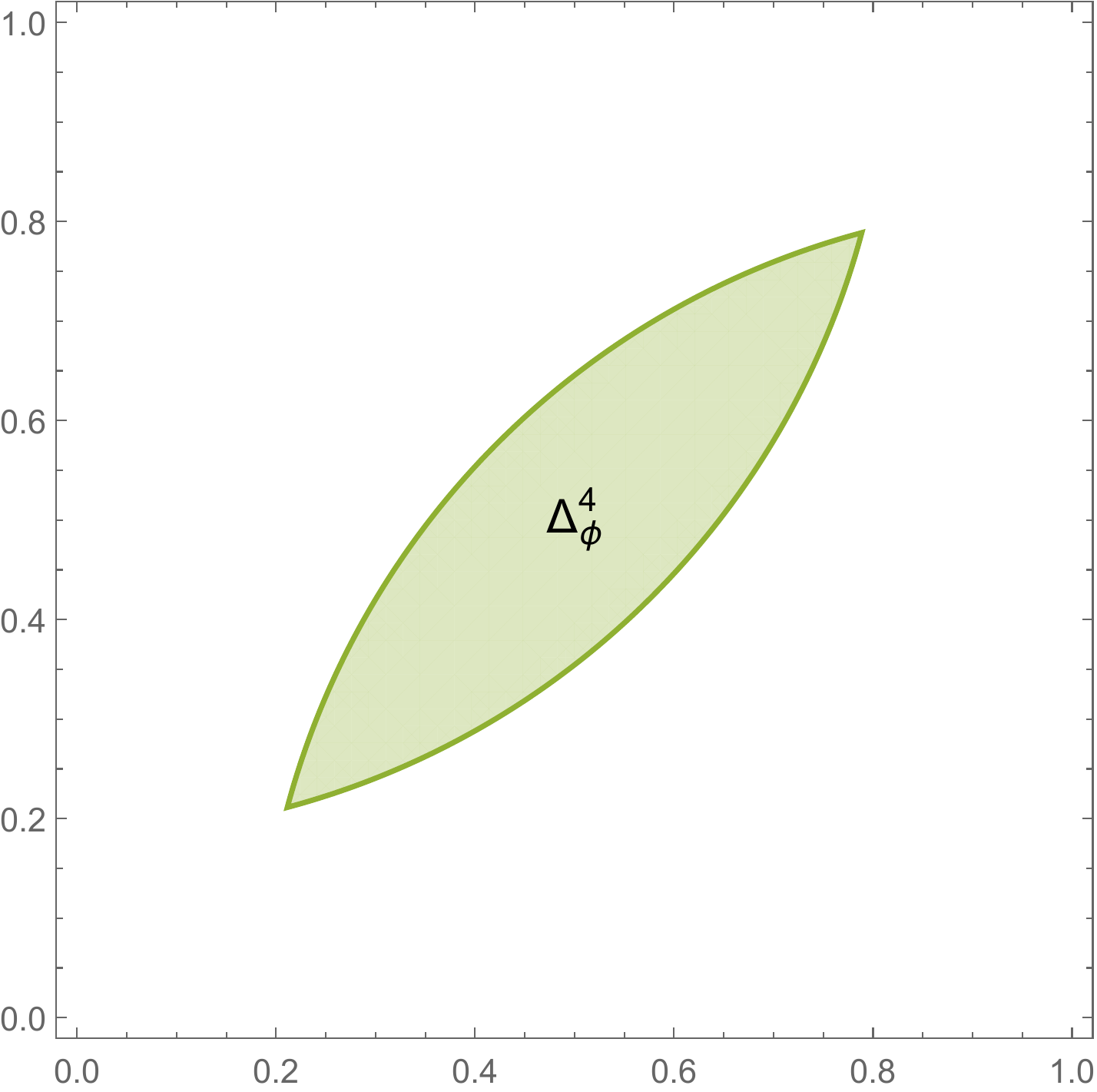}
            \caption{ Regionplots of the areas $\Delta_\phi^1, \ldots, \Delta_\phi^7$ for  $\phi = -\frac12, -\frac25, -\frac{32}{100}, 0$.} \label{fig obmocja phi}
\end{figure}

Next, we define functions of $\phi$ depending on $(a,b)$ on these areas. Since they are, for a fixed value of $(a,b)$, inverses of $\underline{f}_{a,b}(d)$ we chose to denote them by $\delta^i_{a,b}(\phi)$. They are
\begin{align*}\label{delta_ab^i}
\delta_{a,b}^1(\phi)&=\frac12 \left(2b-1+\frac{\sqrt{3}}{3}\sqrt{1+2\phi}\right),\\
\delta_{a,b}^2(\phi)&=\frac13 \left(2b-1+\sqrt{(1-2b)^2+1+2\phi}\right),
\\
\delta_{a,b}^3(\phi)&=\frac13 \left(a+3b-2+\sqrt{(1-2a)^2+1+2\phi}\right),\\
\delta_{a,b}^4(\phi)&=\frac12 \left(a+b-1+\sqrt{3(b-a)^2+(1-2a)(1-2b)+\frac23\left(1+2\phi\right)}\right),\\
\delta_{a,b}^5(\phi)&=\frac13 \left(3a+b-2+\sqrt{(1-2b)^2+1+2\phi}\right),\\
\delta_{a,b}^6(\phi)&=\frac13 \left(2a-1+\sqrt{(1-2a)^2+1+2\phi}\right),\\
\delta_{a,b}^7(\phi)&=\frac12 \left(2a-1+\frac{\sqrt{3}}{3}\sqrt{1+2\phi}\right).
\end{align*}

We are now ready to state one of our main results.

\begin{theorem}\label{thm_phi_upp1}
The pointwise supremum $\overline{F}_{\phi}$ 
of $\FF_{\phi}$ for any $\phi\in[-\frac12, 1]$ and for any $(a,b)\in\II^2$ 
is given by
\begin{equation}\label{F_upper1}
\overline{F}_{\phi}(a,b)=\left\{ \begin{array}{ll}
        \delta_{a,b}^1(\phi);      & \text{if } (a,b) \in \Delta_{\phi}^1 , \vspace{1mm}\\
        \delta_{a,b}^2(\phi);      & \text{if } (a,b) \in \Delta_{\phi}^2 , \vspace{1mm}\\
        \delta_{a,b}^3(\phi);      & \text{if } (a,b) \in \Delta_{\phi}^3 , \vspace{1mm}\\
        \delta_{a,b}^4(\phi);      & \text{if } (a,b) \in \Delta_{\phi}^4 , \vspace{1mm}\\
        \delta_{a,b}^5(\phi);      & \text{if } (a,b) \in \Delta_{\phi}^5 , \vspace{1mm}\\
        \delta_{a,b}^6(\phi);      & \text{if } (a,b) \in \Delta_{\phi}^6 , \vspace{1mm}\\
        \delta_{a,b}^7(\phi);      & \text{if } (a,b) \in \Delta_{\phi}^7 , \vspace{1mm}\\
        M(a,b);       & \text{otherwise. }
       \end{array} \right.
\end{equation}
\end{theorem}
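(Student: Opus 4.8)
The plan is to mirror the proof of Theorem~\ref{thm_phi_low}, but with the roles of the two extremal copulas interchanged. Fix $(a,b)$ in the interior of $\II^2$; I want the \emph{largest} value $d=C(a,b)$ that a copula $C\in\FF_{\phi}$ can take. As in \eqref{c_i}, any $C$ with $C(a,b)=d$ satisfies $\underline{C}^{(a,b)}_{c_1}\leqslant C\leqslant\overline{C}^{(b,a)}_{c_2}$ with $c_1=d-W(a,b)$ and $c_2=M(a,b)-d$ by \cite[Theorem 3.2.3]{Nels}, so monotonicity of the weak measure of concordance $\phi$ gives $\underline{f}_{a,b}(d)\leqslant\phi(C)\leqslant\overline{f}_{a,b}(d)$. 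The lower bound of Theorem~\ref{thm_phi_low} was controlled by $\overline{f}_{a,b}$ of \eqref{f_ab_dep_on_d_2}; the upper bound is instead controlled by $\underline{f}_{a,b}$ of \eqref{f-lower}. The first step is to show that the set of attainable values is exactly $\{d\in[W(a,b),M(a,b)]:\underline{f}_{a,b}(d)\leqslant\phi\leqslant\overline{f}_{a,b}(d)\}$: the inequalities are necessary by the sandwich, and they are sufficient because the fibre $\{C\in\CC:C(a,b)=d\}$ is convex, $\phi$ is continuous, and the endpoints of the resulting $\phi$-range are realised by $\underline{C}^{(a,b)}_{c_1}$ and $\overline{C}^{(b,a)}_{c_2}$, so the intermediate value theorem produces a copula in $\FF_{\phi}$ with $C(a,b)=d$.

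Next I would record that $\underline{f}_{a,b}$ is continuous and nondecreasing in $d$, and strictly increasing off the flat pieces where $\cQ\equiv0$: from the explicit formula for $\underline{C}^{(a,b)}_{c}$ this copula is pointwise nondecreasing in $d_1=d$, so $\underline{f}_{a,b}$ inherits monotonicity from $\phi$, and hence $\underline{f}_{a,b}^{-1}$ is well defined. Since $\underline{f}_{a,b}(W(a,b))=\phi(W)=-\tfrac12$ and $\overline{f}_{a,b}(M(a,b))=1$, the attainable set is a nonempty subinterval $[\overline{f}_{a,b}^{-1}(\phi),\underline{f}_{a,b}^{-1}(\phi)]\cap[W(a,b),M(a,b)]$, whose maximum is $\overline{F}_{\phi}(a,b)=\min\{M(a,b),\underline{f}_{a,b}^{-1}(\phi)\}$. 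This already accounts for the final branch $M(a,b)$ in \eqref{F_upper1}, which occurs precisely when $\underline{f}_{a,b}^{-1}(\phi)\geqslant M(a,b)$.

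The core of the proof is then to make $\underline{f}_{a,b}^{-1}$ explicit. Because $c_1=d-W(a,b)$ forces $d_1=d$, I read $\underline{f}_{a,b}(d)=\tfrac32\cQ(M,\underline{C}^{(a,b)}_{c_1})-\tfrac12$ off the nine cases of Proposition~\ref{prop1}\textit{(a)} with $d_1$ replaced by $d$, and in each case solve $\cQ(M,\underline{C}^{(a,b)}_{c_1})=\tfrac{1+2\phi}{3}$ for $d$. The two extreme cases give $\cQ\equiv0$, i.e. the flat part $\underline{f}_{a,b}\equiv-\tfrac12$ that contributes nothing beyond pushing $d$ up to a case boundary. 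Each of the seven remaining cases is at most quadratic in $d$, and taking the branch that increases with $\phi$ yields exactly the seven functions $\delta^{i}_{a,b}(\phi)$; for instance the square cases $(2d+1-2b)^2$ and $(2d+1-2a)^2$ give $\delta^{1}_{a,b}$ and $\delta^{7}_{a,b}$, the quadratics $d(2+3d-4b)$ and $d(2+3d-4a)$ give $\delta^{2}_{a,b}$ and $\delta^{6}_{a,b}$, the two products give $\delta^{3}_{a,b}$ and $\delta^{5}_{a,b}$, and the generic case $2d(1+d-a-b)-(a-b)^2$ gives $\delta^{4}_{a,b}$.

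It remains to identify the regions. For each $i$ I would substitute $d=\delta^{i}_{a,b}(\phi)$ into the inequalities defining the corresponding case of Proposition~\ref{prop1}\textit{(a)} and adjoin the cap condition $\delta^{i}_{a,b}(\phi)\leqslant M(a,b)$; a direct computation then shows this system is equivalent to $\Delta^{i}_{\phi}$, the cap condition furnishing the third defining inequality of each region. To reduce the bookkeeping I would invoke Lemma~\ref{lem:symm}\textit{(a)}, which applies because $\phi$ satisfies (C1) and (C8): the transpose symmetry $\overline{F}_{\phi}(a,b)=\overline{F}_{\phi}(b,a)$ and the radial symmetry $\overline{F}_{\phi}=\widehat{\overline{F}}_{\phi}$ generate a group whose orbits on $\{1,\dots,7\}$ are $\{1,7\}$, $\{2,3,5,6\}$ and $\{4\}$ (one checks $\delta^{i}_{b,a}=\delta^{8-i}_{a,b}$ and that the radial map carries $\delta^{2}_{a,b}$ to $\delta^{5}_{a,b}$, whence the stated orbits), so it suffices to treat $\Delta^{1}_{\phi}$, $\Delta^{2}_{\phi}$ and $\Delta^{4}_{\phi}$ by hand. \textbf{The main obstacle} is exactly this region-translation step: one must verify that the seven curved regions together with the $M(a,b)$-region tile $\II^2$ with no gaps or overlaps and that their curved boundaries match those inherited from Proposition~\ref{prop1}. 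This requires careful sign analysis when squaring the inequalities with nested radicals, together with a check that the selected quadratic branch stays within $[W(a,b),M(a,b)]$ throughout each region.
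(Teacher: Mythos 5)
Your proposal is correct and follows essentially the same route as the paper: sandwich any $C$ with $C(a,b)=d$ between the extremal copulas of Nelsen's Theorem 3.2.3, reduce the supremum to inverting the nondecreasing function $\underline{f}_{a,b}$ read off Proposition \ref{prop1}(a) (yielding the $\delta^i_{a,b}$ and the cap $M(a,b)$), and use the symmetries of Lemma \ref{lem:symm}(a) to shrink the case analysis to the representatives $\delta^1,\delta^2,\delta^4$ — the paper merely applies the symmetry first, restricting to the triangle $a\leqslant b$, $a+b\leqslant 1$ where only four cases of Proposition \ref{prop1}(a) survive, whereas you apply it at the region-identification stage, an inessential reordering. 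Your explicit attainment argument (convexity of the fibre $\{C\in\CC : C(a,b)=d\}$ together with continuity of $\phi$ and the intermediate value theorem) spells out a step the paper leaves implicit, and your deferral of the region-matching algebra parallels the paper's own deferral of those domain computations to Mathematica.
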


Before we give the proof we gather some observations in the following corollary:

\begin{corollary}\label{cor phi2}
Suppose that $\overline{F}_{\phi}$ is the supremum given in Theorem \ref{thm_phi_upp1}. Then:
\begin{enumerate}[(i)]
 \item We have $\overline{F}_{-\frac12}$ is a shuffle of $M$, i.e., $\overline{F}_{-\frac12}  = M(2,\{[0,\frac12],[\frac12,1]\},(2,1),1)$ and $\overline{F}_{\phi}=M$ for $\phi \in \left[ \frac14, 1 \right]$.
 \item For $\phi \in \left( -\frac12, \frac14\right)$ the bound $\overline{F}_{\phi}$ is not a copula, but a proper quasi-copula.
 \item $\overline{F}_{\phi}$ is increasing in $\phi$ (in the concordance order on quasicopulas).
 \item $\overline{F}_{\phi}$ is symmetric and radially symmetric: $\overline{F}_{\phi}(a,b)=\overline{F}_{\phi}(b,a)$ and $\overline{F}_{\phi}(a,b)=\widehat{\overline{F}}_{\phi}(a,b)$.
 \item If we extend the weak measure of concordance $\phi$ to any quasicopula $Q$ by defining $$\phi(Q)=6\int_0^1 Q(t,t) dt - 2$$
     then 
     we have $\phi\left(\overline{F}_{\phi}\right)<\phi$ for all $\phi\in(0,1)$. (See Figure \ref{phi(phi)}.)
 \end{enumerate}
\end{corollary}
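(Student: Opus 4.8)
For part (v) I will carry out the explicit computation rather than invoke the supremum property; the plan below reads all five assertions off the formula \eqref{F_upper1} together with the region count recorded just before Theorem \ref{thm_phi_upp1}.

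Parts (iv) and (i) are the quickest. Part (iv) is immediate from Lemma \ref{lem:symm}(a): Spearman's footrule is a weak measure of concordance, satisfying (C1) and, by \eqref{C8 for phi}, (C8), so $\overline{F}_\phi$ is symmetric and radially symmetric. For part (i) I would substitute the two boundary values of $\phi$. At $\phi=-\tfrac12$ the term $1+2\phi$ drops out of every radicand in the $\delta_{a,b}^i$, the square roots collapse to absolute values, and the area $\Delta_{-1/2}^4$ shrinks to the main diagonal; checking the remaining pieces and their shared boundaries then identifies $\overline{F}_{-1/2}$ with the two-segment shuffle $M(2,\{[0,\tfrac12],[\tfrac12,1]\},(2,1),1)$. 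For $\phi\in(\tfrac14,1]$ every $\Delta_\phi^i$ is empty, so \eqref{F_upper1} returns $M$; and at $\phi=\tfrac14$ the only surviving area $\Delta_{1/4}^4$ degenerates to the single point $(\tfrac12,\tfrac12)$, where $\tfrac23(1+2\phi)=1$ forces $\delta_{a,b}^4=M$, so $\overline{F}_{1/4}=M$ as well.

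For part (iii) I would fix $(a,b)$ and follow $\phi\mapsto\overline{F}_\phi(a,b)$. On each $\phi$-range for which $(a,b)$ stays in a fixed area the value is a single $\delta_{a,b}^i(\phi)$ whose dependence on $\phi$ enters only through a square-root term that is strictly increasing in $\phi$, so that piece is non-decreasing; on the complementary range the value is the constant cap $M(a,b)$. Since \eqref{F_upper1} is continuous across the internal boundary curves (adjacent $\delta^i$ agree there, and meet $M$ where an area abuts the $M$-region), the pieces join without a downward jump, so $\phi\mapsto\overline{F}_\phi(a,b)$ is non-decreasing, which is monotonicity in the concordance order (equivalently the pointwise order) on quasi-copulas. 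Part (ii) uses that $\overline{F}_\phi$, being a pointwise supremum of copulas, is automatically a quasi-copula, so it suffices to show it fails to be $2$-increasing for $\phi\in(-\tfrac12,\tfrac14)$. Mimicking the copula test of Corollary \ref{cor_phi} via \cite[Theorem 2.1]{DuJa}, but now aiming at the opposite sign, I would compute the absolutely continuous density inside $\Delta_\phi^4$, namely $\partial_a\partial_b\,\delta_{a,b}^4$, and find it negative near the centre: at $(\tfrac12,\tfrac12)$ it equals $-\tfrac12\bigl(\tfrac23(1+2\phi)\bigr)^{-1/2}<0$ for every $\phi>-\tfrac12$, in contrast with the positive density \eqref{F_lower-ab} of the lower bound. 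By continuity the density stays negative on a neighbourhood, giving an open sub-rectangle of negative $\overline{F}_\phi$-mass; hence $\overline{F}_\phi$ is a proper quasi-copula throughout $(-\tfrac12,\tfrac14)$, while the negativity degenerates exactly at the endpoints, consistently with (i).

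Finally, for part (v) I would compute $\phi(\overline{F}_\phi)=6\int_0^1\overline{F}_\phi(t,t)\,dt-2$ from the diagonal. The point $(t,t)$ lies in $\Delta_\phi^4$ precisely for $t\in[\tfrac12-r,\tfrac12+r]$ with $r=\tfrac16\sqrt{3(1-4\phi)}$, and in the $M$-region otherwise, where on the central interval $\overline{F}_\phi(t,t)=\tfrac12\bigl(2t-1+\sqrt{(1-2t)^2+\tfrac23(1+2\phi)}\bigr)$. Writing $s=\tfrac23(1+2\phi)$, the affine part $2t-1$ integrates to zero by symmetry about $t=\tfrac12$, while $\int\sqrt{w^2+s}\,dw$ contributes a logarithmic term; the identity $4r^2+s=1$ cleans up the endpoint evaluations and produces a closed form for $\phi(\overline{F}_\phi)$ as a single function of $\phi$ (with $\phi(\overline{F}_\phi)=\phi(M)=1$ on $[\tfrac14,1)$, where $\overline{F}_\phi=M$). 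The remaining and decisive step is to compare this closed form with $\phi$ over $(0,1)$ so as to confirm the strict inequality $\phi(\overline{F}_\phi)<\phi$ asserted in (v); this is a one-variable transcendental inequality, and settling it by a sign-and-monotonicity analysis of the difference is where I expect the main effort of the whole corollary to lie.
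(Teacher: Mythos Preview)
Your approach to (i)--(iv), and your setup of the diagonal integral in (v), match the paper's proof essentially exactly: the paper too invokes Lemma~\ref{lem:symm} for (iv), reads (i) off the vanishing of the regions $\Delta^i_\phi$, computes the mixed partial $\partial_a\partial_b\,\delta^4_{a,b}$ at $(\tfrac12,\tfrac12)$ for (ii) (obtaining $-\tfrac{\sqrt3}{2\sqrt{4\phi+2}}$, which is your value), omits the details of the monotonicity in (iii), and writes down a closed form for $\phi(\overline{F}_\phi)$ in (v).

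There is, however, a genuine problem with your final step in (v). You correctly observe that $\phi(\overline{F}_\phi)=\phi(M)=1$ on $[\tfrac14,1)$, and then propose to ``confirm the strict inequality $\phi(\overline{F}_\phi)<\phi$'' on $(0,1)$. These two statements are incompatible: $1<\phi$ fails for every $\phi\in[\tfrac14,1)$. The inequality as printed is in fact a typo; the correct direction is $\phi(\overline{F}_\phi)>\phi$, parallel to part~(vi) of Corollary~\ref{cor gamma1} for the upper bound $\overline{G}_\gamma$, and consistent with $\overline{F}_\phi$ being the \emph{upper} envelope of $\FF_\phi$. Your own diagonal computation will show this once carried out (for instance at $\phi=0$ one gets $\phi(\overline{F}_0)\approx 0.91>0$). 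So the ``decisive step'' you anticipate is not a delicate transcendental inequality in the stated direction but the recognition that the sign is reversed; after that correction the comparison is routine.
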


\begin{figure}[h]
            \includegraphics[width=6cm]{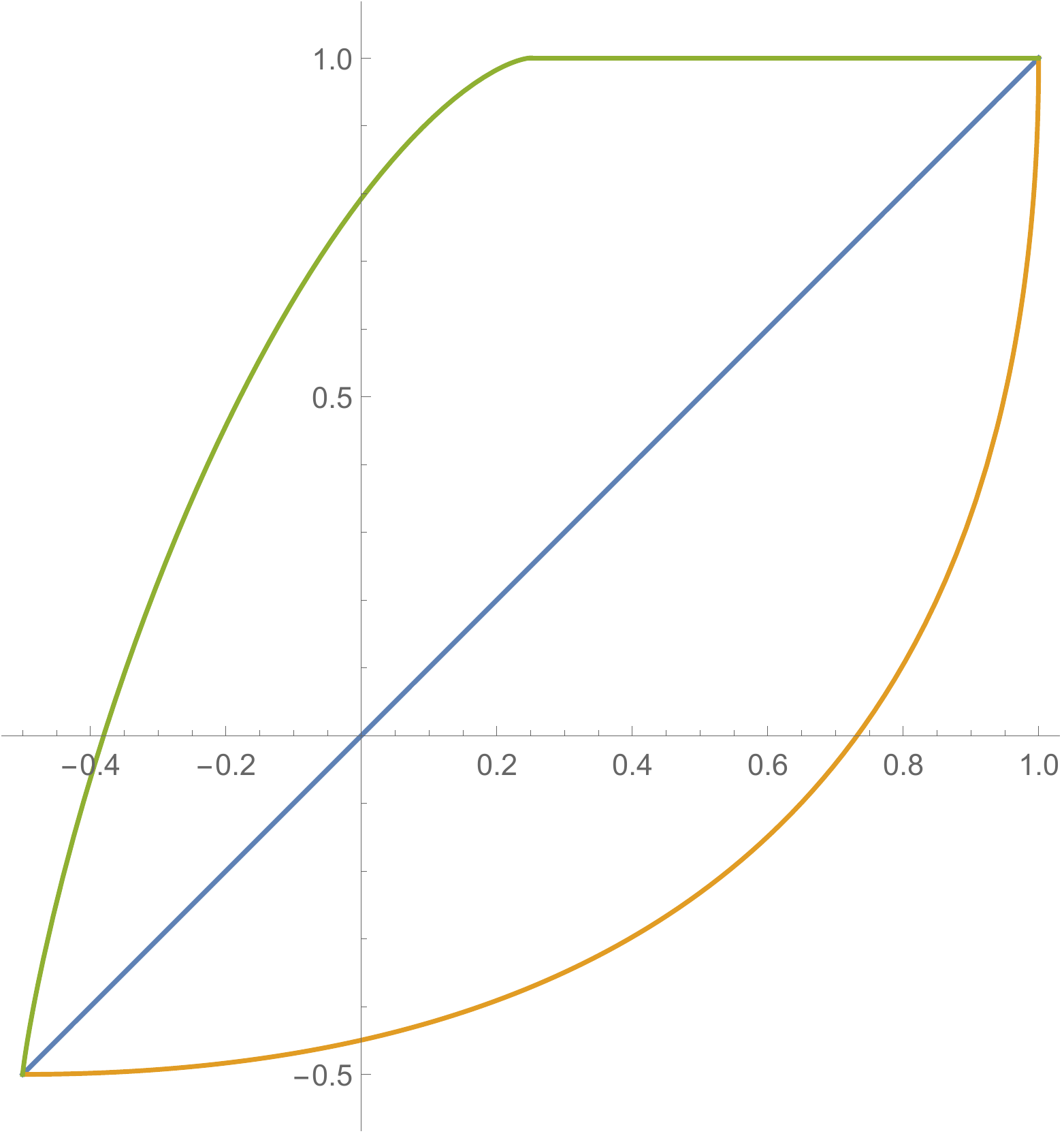}
            \caption{Graphs of values of $\phi(\underline{F}_{\phi})$ (orange) and $\phi(\overline{F}_{\phi})$ (green). } \label{phi(phi)}
\end{figure}

\begin{proof} First notice that $\overline{F}_{\phi}$ equals the Fr\'echet-Hoeffding upper bound $M$ for any $\phi \in [\frac14, 1]$, since then all the regions $\Delta_{\phi}^i$ disappear. If $\phi = -\frac12$, then $\overline{F}_{\phi}$ is a shuffle of $M$, namely
$\overline{F}_{-\frac12}  = M(2,\{[0,\frac12],[\frac12,1]\},(2,1),1)$. For any $\phi \in (-\frac12, \frac14)$,  the point $(\frac12,\frac12)$ lies inside the area $\Delta_\phi^4$ and the second derivative $\frac{\partial^2 \delta_{a,b}^4(\phi)}{\partial a \partial b}$ at the point $(\frac12,\frac12)$ equals $-\frac{\sqrt{3}}{2 \sqrt{4 \phi +2}}<0$. Therefore, \cite[Theorem 2.1]{DuSe} implies that $\overline{F}_{\phi}$ is {not} a copula. So, \textit{(i)} and \textit{(ii)} hold.

A careful analysis that we omit shows that $\overline{F}_{\phi}$ is an increasing function of $\phi$, as \textit{(iii)} asserts. Statement \textit{(iv)} is a consequence of Lemma \ref{lem:symm}.

A rather technical calculation shows that
$$
\phi(\overline{F}_{\phi})=\left\{ \begin{array}{ll}
        \frac12\left(2-\sqrt{3-12\phi}+(1+2\phi)\log\left(3+\sqrt{3-12\phi}\right)\right); & \text{if } \ -\frac12\leqslant\phi\leqslant\frac14, \vspace{1mm}\\
        1;            & \text{otherwise. }
        \end{array} \right.
$$
So \textit{(v)} holds (see Figure \ref{phi(phi)}).
\end{proof}

In the Figure \ref{f zgoraj} we give the 3D plots of the quasicopulas $\overline{F}_{\phi}$ for $\phi = -\frac25$, $-\frac{32}{100}$ and 0.\\

We remark that the set $\FF_{1}$ consists of $M$ only, while $\FF_{-\frac12}$ consist of all copulas $C$ such that $C\leqslant\overline{F}_{-\frac12}$ since $\phi$ is monotone. This is a special case of a result of Fuchs and McCord \cite{FuMcC}.

\begin{figure}[h]
            \includegraphics[width=6cm]{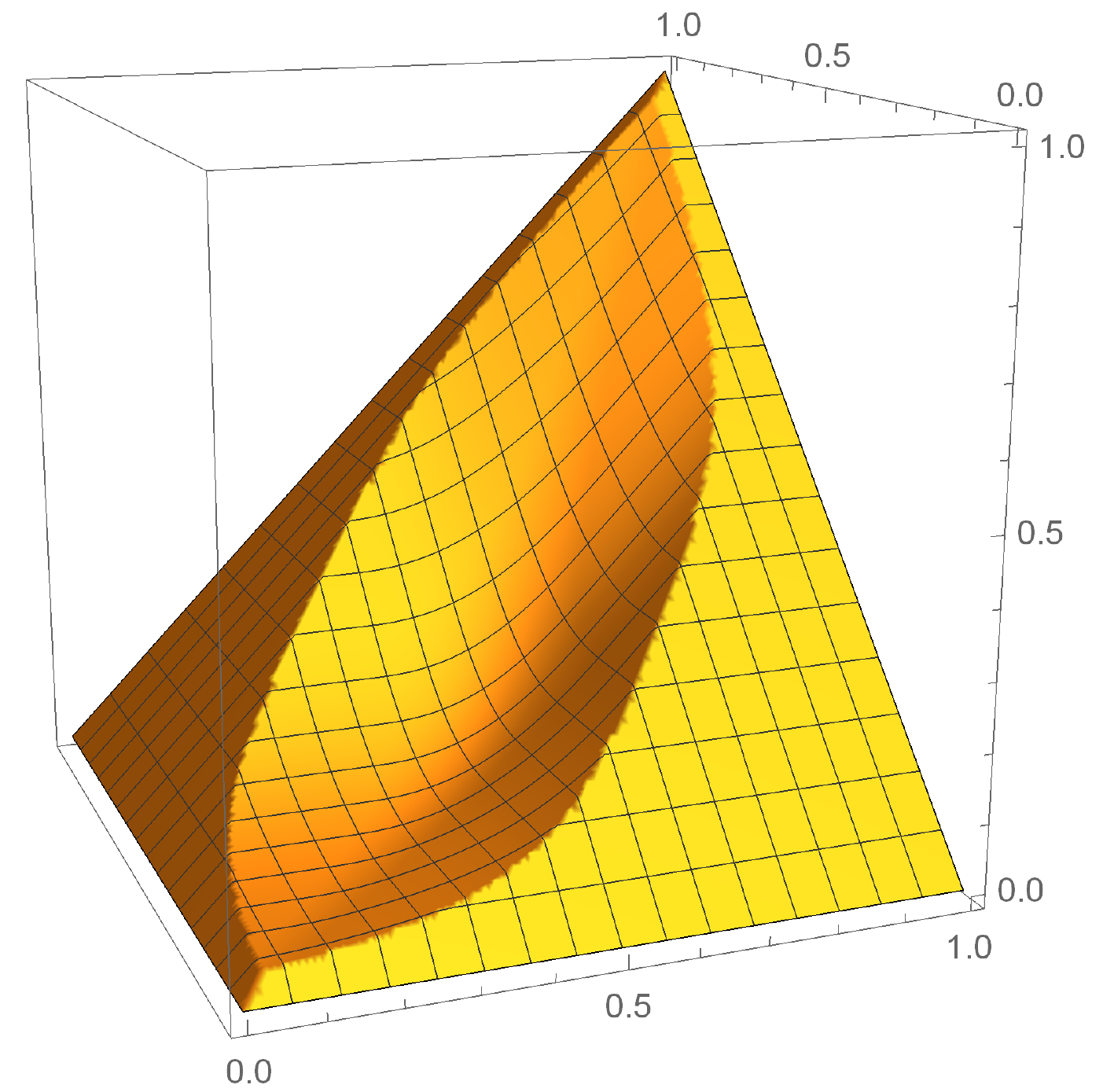} \hfil \includegraphics[width=6cm]{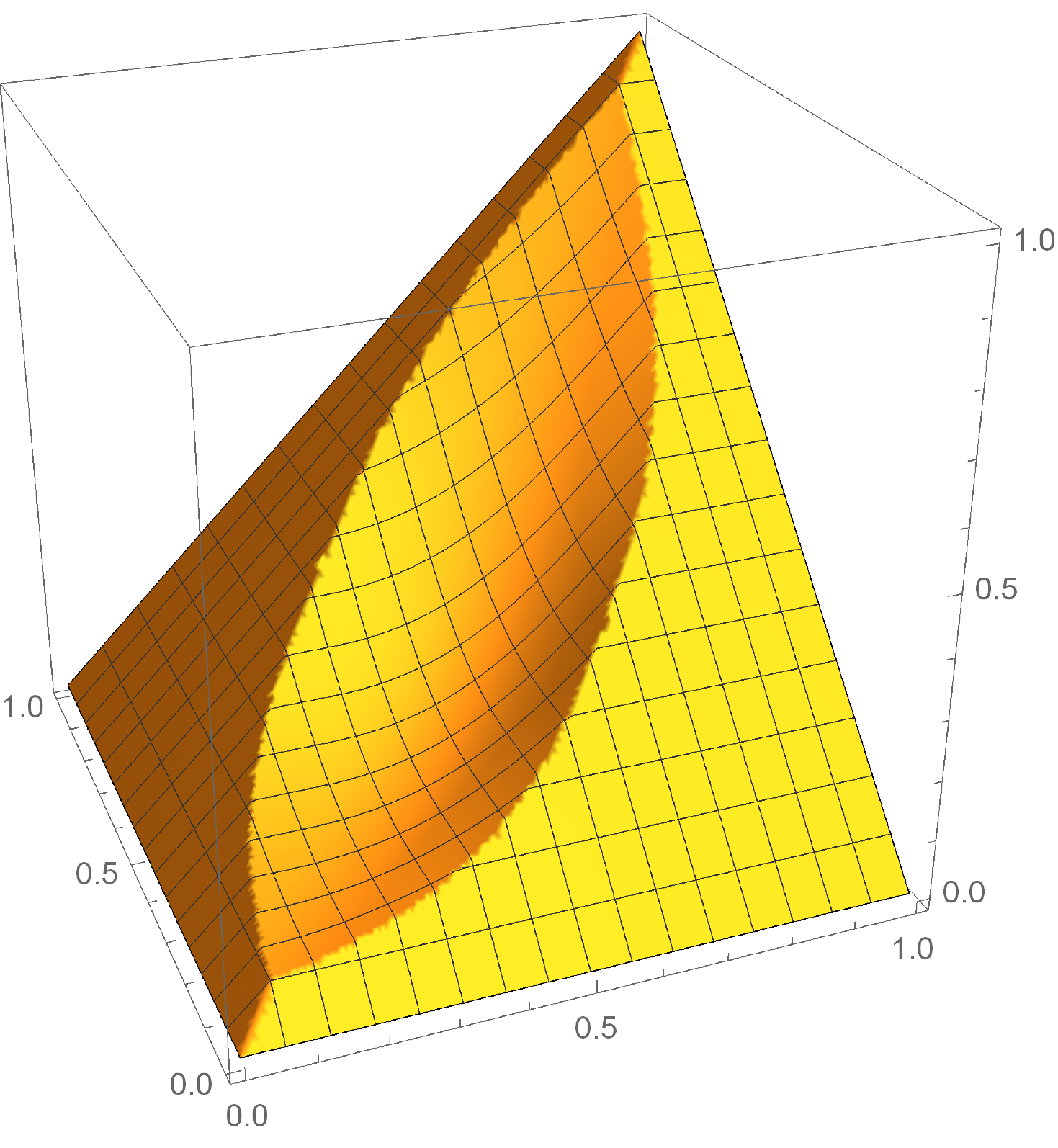}\\
            \includegraphics[width=6cm]{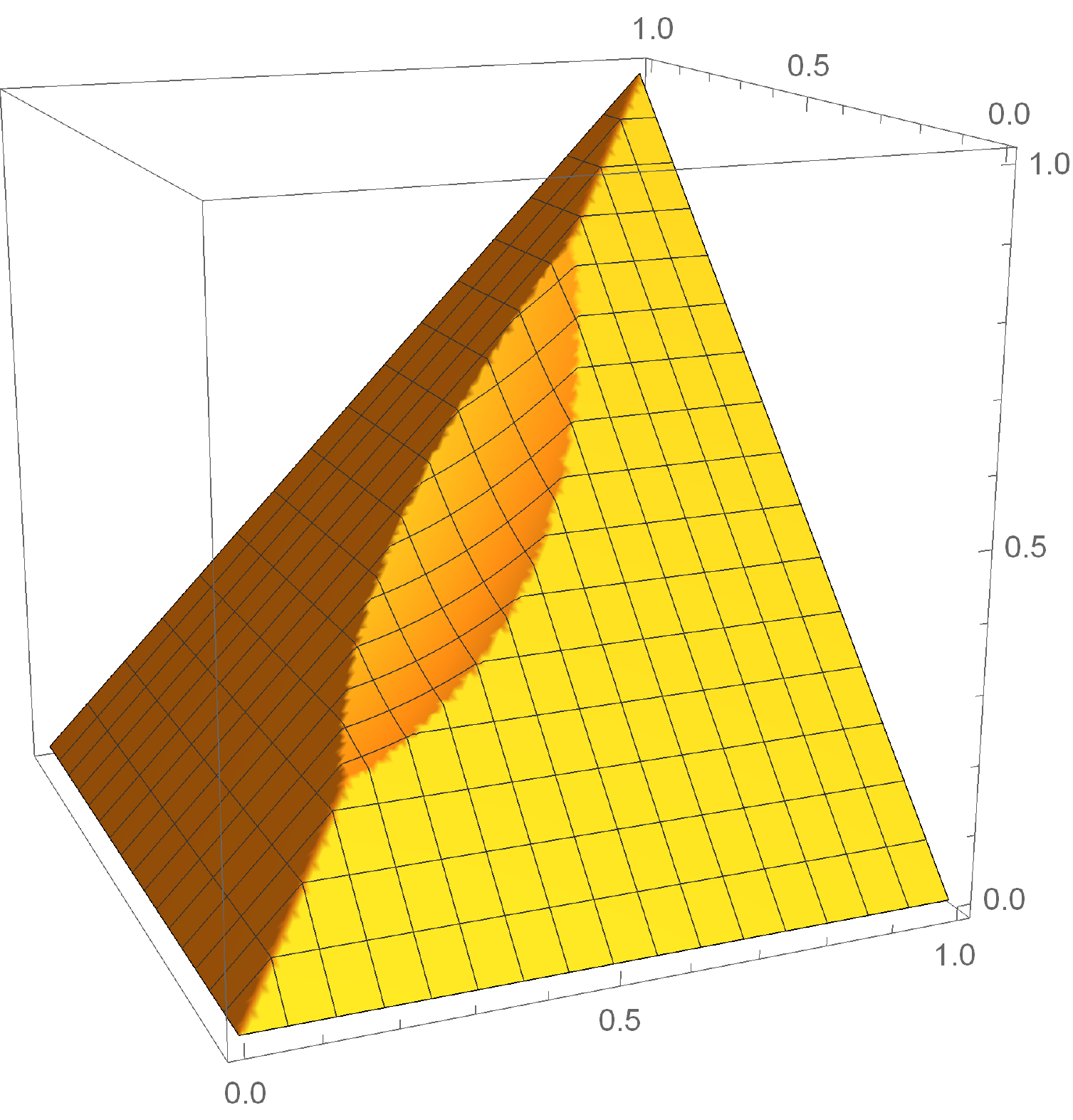}
            \caption{ Graphs of quasicopulas $\overline{F}_{\phi}$ for $\phi = -\frac25$, $-\frac{32}{100}$ and 0.} \label{f zgoraj}
\end{figure}

\begin{proof}[Proof of Theorem \ref{thm_phi_upp1}]
The pointwise supremum $\overline{F}_{\phi}$ is symmetric and radially symmetric by Lemma \ref{lem:symm}.
Thus we may assume that the point $(a,b)$ lies in the triangle $\Delta= \{ (a,b) \in \II^2; \, a \leqslant b, \, a+b \leqslant 1 \}$.

Now, we use Proposition \ref{prop1} to show that for $(a,b)\in\Delta$ we have
\begin{equation}\label{f underline phi}
  \begin{split}
       \underline{f}_{a,b}(d)&=\phi\left(\underline{C}^{(a,b)}_{d-W(a,b)}\right)=\frac32\cQ\left(M,\underline{C}^{(a,b)}_{c_1}\right)-\frac12= \\
       &=\left\{ \begin{array}{ll}
        -\frac12;               & \text{if } b \geqslant d + \frac12, \vspace{1mm}\\
        f_{a,b}^1(d);           & \text{if } \frac12(1+d) \leqslant b \leqslant d + \frac12,  \vspace{1mm}\\
        f_{a,b}^2(d);           & \text{if } a+d \leqslant b \leqslant \frac12(1+d), \vspace{1mm}\\
        f_{a,b}^4(d);           & \text{if } b \leqslant a+d.
                \end{array} \right. ,
	\end{split}
\end{equation}
where
\begin{align*}
f_{a,b}^1(d)&=\frac32\left(2d+1-2b\right)^2-\frac12,\\
f_{a,b}^2(d)&=\frac32 d\left(2+3d-4b\right)-\frac12,\\
f_{a,b}^4(d)&=3d\left(1+d-a-b\right)-\frac32 \left(b-a\right)^2-\frac12.
\end{align*}

Since $d=\underline{C}^{(a,b)}_{c_1}(a,b)$
it follows that $W(a,b)=0\leqslant d\leqslant M(a,b)=a$. For such values of $d$ the expression on the right-hand side of \eqref{f underline phi} is increasing in $d$ and thus, the  maximal possible value of $\underline{f}_{a,b}(d)$ is achieved when $d=a$. Then, we have
$$\underline{f}_{a,b}(a)=\left\{ \begin{array}{ll}
        -\frac12;                              & \text{if } b \geqslant a + \frac12, \vspace{1mm}\\
        6(b-a)^2-6(b-a)+1;                     & \text{if } \frac12(1+a) \leqslant b \leqslant a + \frac12, \vspace{1mm}\\
        \frac32 a\left(3a-4b+2\right)-\frac12; & \text{if } 2a \leqslant b \leqslant \frac12(1+a), \vspace{1mm}\\
        1-\frac32\left((a-1)^2+b^2\right);     & \text{if } b \leqslant 2a.
                \end{array} \right. $$
For the function $\underline{f}_{a,b}:[0, a]\to [-\frac12, 
\underline{f}_{a,b}(a)]$ we need to find its inverse.
If for a given value $\phi \in \left[ -\frac12, 1 \right]$ it holds that $\phi \geqslant \underline{f}_{a,b}(a)$, we take $d=a=M(a,b)$.
Otherwise, we take the inverses of the expressions for $f_{a,b}^i$ which are $\delta_{a,b}^i(\phi)$ for $i=1, 2, 4$.
The inequality $\phi \geqslant \underline{f}_{a,b}(a)$ gives us the area
\begin{align*}
\bigg\{ (a,b)\in \Delta; \,
& \left( b \geqslant  a+\frac12 \left( 1- \frac{\sqrt{3}}{3} \sqrt{1+2\phi} \right) \textrm{ and } b \geqslant \frac12(1+a) \right) \textrm{ or } \\
& \left( a \leqslant \frac13 \left( 2b-1 +\sqrt{(2b-1)^2+1+2\phi} \right)  \textrm{ and } 2a \leqslant b \leqslant \frac12(1+a) \right) \textrm{ or } \\
& \left( b \geqslant \sqrt{\frac23 (1-\phi)-(a-1)^2} \textrm{ and } b\leqslant 2a \right)
\bigg\}
\end{align*}
which is equal to the area $\Delta \setminus (\Delta^1_\phi \cup \Delta^2_\phi\cup \Delta^4_\phi)$. {We continue by considering the inequalities $\phi \geqslant \underline{f}^i_{a,b}(a)$ for $i=1,2,4$. Their  }
careful consideration yields areas where each of the expressions $\delta_{a,b}^i(\phi)$ is valid {and these are exactly} the areas $\Delta^i_\phi \cap \Delta$ for $i=1,2,4$. Now, we reflect the expressions $\delta_{a,b}^1(\phi),  \delta_{a,b}^2(\phi),  \delta_{a,b}^4(\phi)$ over the main diagonal and over the counter-diagonal to obtain the expressions $ \delta_{a,b}^1(\phi), \ldots,  \delta_{a,b}^7(\phi)$. The areas where they are valid are the reflections of the areas $\Delta^1_\phi\cap \Delta, \Delta^2_\phi\cap \Delta, \Delta^4_\phi\cap \Delta$ over the main diagonal and over the counter-diagonal, i.e., the areas $\Delta^1_\phi, \ldots, \Delta^7_\phi$.

We conclude that the required upper bound is given by \eqref{F_upper1}. The detailed calculations of the functions and their domains were done with a help of Wolfram Mathematica software \cite{Mathematica}.
\end{proof}

\section{Local bounds for Gini's gamma}\label{sec:gamma}

In this section we compute the local bounds for Gini's gamma. For each value $\gamma\in[-1,1]$ we write
\begin{equation}\label{eq:gamma}
  \GG_{\gamma}:=\{C\in\CC\,|\,\gamma(C)=\gamma\}.
\end{equation}
Our aim is to find the upper and lower bound of $\GG_{\gamma}$. We denote by $\underline{G}_{\gamma}(a,b)$ the pointwise infimum of $\GG_{\gamma}$ and by $\overline{G}_{\gamma}(a,b)$ the pointwise supremum of $\GG_{\gamma}$. The computation of the upper bound of $\GG_{\gamma}$ is done in a way that is similar to the one used for the upper bound of $\FF_{\phi}$ in the previous section. The lower bound of $\GG_{\gamma}$ is then obtained using the symmetries that hold for Gini's gamma and are proved in Lemma \ref{lem:symm}. Part (b) of Lemma \ref{lem:symm} does not hold for Spearman's footrule, so the argument there had to be different. The reason is the fact that Spearman's footrule is only a weak measure of concordance while Gini's gamma is a measure of concordance.

Now, we fix a value $\gamma\in[-1,1]$ and we choose a point $(a,b)$ in the interior of the unit square $\II^2$. We will find the minimal and maximal value of $C(a,b)$ for all copulas in $\GG_{\gamma}$.
Suppose that $C\in \GG_{\gamma}$ and $C(a,b)=d$. By results of \cite{KoBuKoMoOm2} it follows that
$$\underline{C}^{(a,b)}_{c_1} \leqslant C \leqslant \overline{C}^{(b,a)}_{c_2}.$$
Recall that $d=\underline{C}^{(a,b)}_{c_1}(a,b)=W(a,b)+c_1$ and $d=\overline{C}^{(b,a)}_{c_2}(a,b)=M(a,b)-c_2$ and so (\ref{c_i}) holds. Since concordance functions are monotone it follows that
$$\underline{g}_{a,b}(d)\leqslant \gamma(C)\leqslant \overline{g}_{a,b}(d),$$
where we write
\begin{equation}
\label{g-lower}\underline{g}_{a,b}(d)=\gamma\left(\underline{C}^{(a,b)}_{c_1}\right)=\gamma\left(\underline{C}^{(a,b)}_{d-W(a,b)}\right)
\end{equation}
and
\begin{equation}\label{g-upper}
\overline{g}_{a,b}(d)=\gamma\left(\overline{C}^{(b,a)}_{c_2}\right)=\gamma\left(\overline{C}^{(b,a)}_{M(a,b)-d}\right).
\end{equation}

First, we compute the upper bound $\overline{G}_{\gamma}(a,b)$. To simplify the expressions we introduce some new notation. We divide the unit square in several areas depending on the value of $\gamma\in [-1,1]$.
The shapes of these regions evolve and they vanish one after another with increasing value of $\gamma$. The dynamics can be observed in Figure \ref{fig obmocja gamma}. We define the areas in the unit square as
\begin{align*}
\Omega_{\gamma}^1 = \bigg\{ (a,b)\in \II^2; \,  & a\leqslant \frac12, \,\frac12 \left( 1+\frac{1+\gamma}{1-2a} \right)  \leqslant b \leqslant 1 -\frac{1+\gamma}{4a}\bigg\}
\\
\Omega_{\gamma}^2 = \bigg\{ (a,b)\in \II^2; \, & b \leqslant \frac12 \left(1+ \frac{1+\gamma}{1-2a} \right), (1+2a-2b)^2+4a(1-b) \geqslant 1+\gamma, \\
& b \geqslant \frac13 \left(a+1 +\frac12 \sqrt{(2a-1)^2+3(1+\gamma)} \right), b \geqslant \frac14 \left(6a-1+\frac{1+\gamma}{1-2a} \right)
\bigg\}
\end{align*}
\begin{align*}
\Omega_{\gamma}^3 = \bigg\{ (a,b)\in \II^2; \, &
b \leqslant \frac13 \left(a+1 +\frac12 \sqrt{(2a-1)^2+3(1+\gamma)} \right), b \leqslant \frac18 \left(3a+6- \frac{1+\gamma}{a} \right), \\
& a \leqslant \frac{1}{11} \left(3+5b- \sqrt{9(2b-1)^2+11(1+\gamma)} \right)
\bigg\}  \\
\Omega_{\gamma}^4 =  \bigg\{ (a,b)\in \II^2; \, & a \geqslant \frac13 \left(b+1 - \frac12 \sqrt{(2b-1)^2+3(1+\gamma)} \right),
a \geqslant \frac18 \left(3b-1+ \frac{1+\gamma}{1-b} \right), \\
& b \geqslant \frac{1}{11} \left(3+5a + \sqrt{9(2a-1)^2+11(1+\gamma)} \right)
\bigg\}\\
\Omega_{\gamma}^5 = \bigg\{ (a,b)\in \II^2; \, & \frac{1}{11} \left(3+5b- \sqrt{9(2b-1)^2+11(1+\gamma)} \right) \leqslant a \leqslant \frac{1}{11} \left(3+5b + \sqrt{9(2b-1)^2+11(1+\gamma)} \right), \\
& \frac{1}{11} \left(3+5a- \sqrt{9(2a-1)^2+11(1+\gamma)} \right) \leqslant b \leqslant \frac{1}{11} \left(3+5a + \sqrt{9(2a-1)^2+11(1+\gamma)} \right), \\
& b \leqslant -2a+ \sqrt{3a(a+2)-(1+\gamma)}, a \leqslant -2b+ \sqrt{3b(b+2)-(1+\gamma)}
 \bigg\}  \\
\Omega_{\gamma}^6 =\bigg\{ (a,b)\in \II^2; \, & b \geqslant \frac13 \left(a+1 - \frac12 \sqrt{(2a-1)^2+3(1+\gamma)} \right),
b \geqslant \frac18 \left(3a-1+ \frac{1+\gamma}{1-a} \right), \\
& a \geqslant \frac{1}{11} \left(3+5b + \sqrt{9(2b-1)^2+11(1+\gamma)} \right)
\bigg\}
\\
\Omega_{\gamma}^7 = \bigg\{ (a,b)\in \II^2; \, &  a \leqslant \frac13 \left(b+1 +\frac12 \sqrt{(2b-1)^2+3(1+\gamma)} \right), a \leqslant \frac18 \left(3b+6- \frac{1+\gamma}{b} \right), \\
& b \leqslant \frac{1}{11} \left(3+5a- \sqrt{9(2a-1)^2+11(1+\gamma)} \right)
\bigg\} \\
\Omega_{\gamma}^8 =  \bigg\{ (a,b)\in \II^2; \, & a \leqslant \frac12 \left(1+ \frac{1+\gamma}{1-2b} \right), (1-2a+2b)^2+4b(1-a) \geqslant 1+\gamma, \\
& a \geqslant \frac13 \left(b+1 +\frac12 \sqrt{(2b-1)^2+3(1+\gamma)} \right), a \geqslant \frac14 \left(6b-1+\frac{1+\gamma}{1-2b} \right)
\bigg\} \\
\Omega_{\gamma}^9 = \bigg\{ (a,b)\in \II^2; \,  & b\leqslant \frac12, \,\frac12 \left( 1+\frac{1+\gamma}{1-2b} \right)  \leqslant a \leqslant 1 -\frac{1+\gamma}{4b}\bigg\}
\end{align*}

Notice that for $\gamma$ close to $-1$ all these areas are nonempty. When $\gamma$ increases some of the areas vanish. More precisely, all the areas are nonempty for $\gamma \in [-1, \, -\frac34]$. For $\gamma=-1$ area $\Omega_\gamma^5$ is reduced to the main diagonal and the areas $\Omega_\gamma^2$ and $\Omega_\gamma^8$ are reduced to unions of two perpendicular line segments on the lines $a=\frac12$, $b=\frac12$. For $\gamma \in \left( -\frac34,\, -\frac49 \right]$ only areas $\Omega_\gamma^1$ and $\Omega_\gamma^9$ are empty. For $\gamma \in \left( -\frac49,\, -\frac{4}{13} \right]$ the areas $\Omega_\gamma^1$, $\Omega_\gamma^2$, $\Omega_\gamma^8$ and $\Omega_\gamma^9$ are empty. For $\gamma \in \left( -\frac{4}{13}, \frac12 \right]$ only area $\Omega_\gamma^5$ is nonempty. For $\gamma \in \left( \frac12, 1\right]$ all the areas are empty.
In Figure \ref{fig obmocja gamma} we give the regionplots of the areas  for $\gamma = -1, -\frac{24}{25}, -\frac45$, (first row), and $\gamma= -\frac{7}{10}, -\frac{43}{100}, \, 0$ (second row).

\begin{figure}[h]
            \includegraphics[width=5cm]{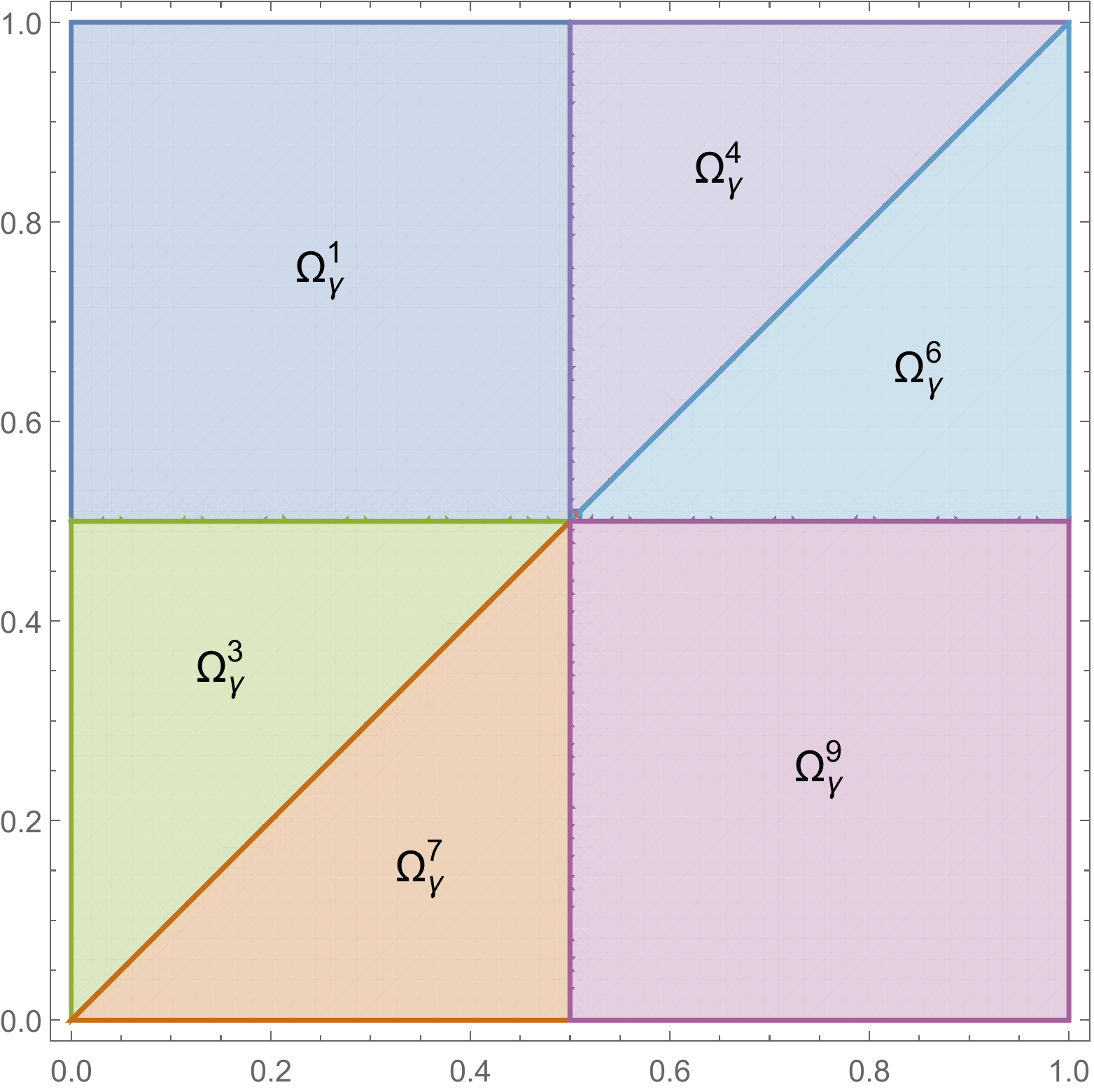} \hfil
            \includegraphics[width=5cm]{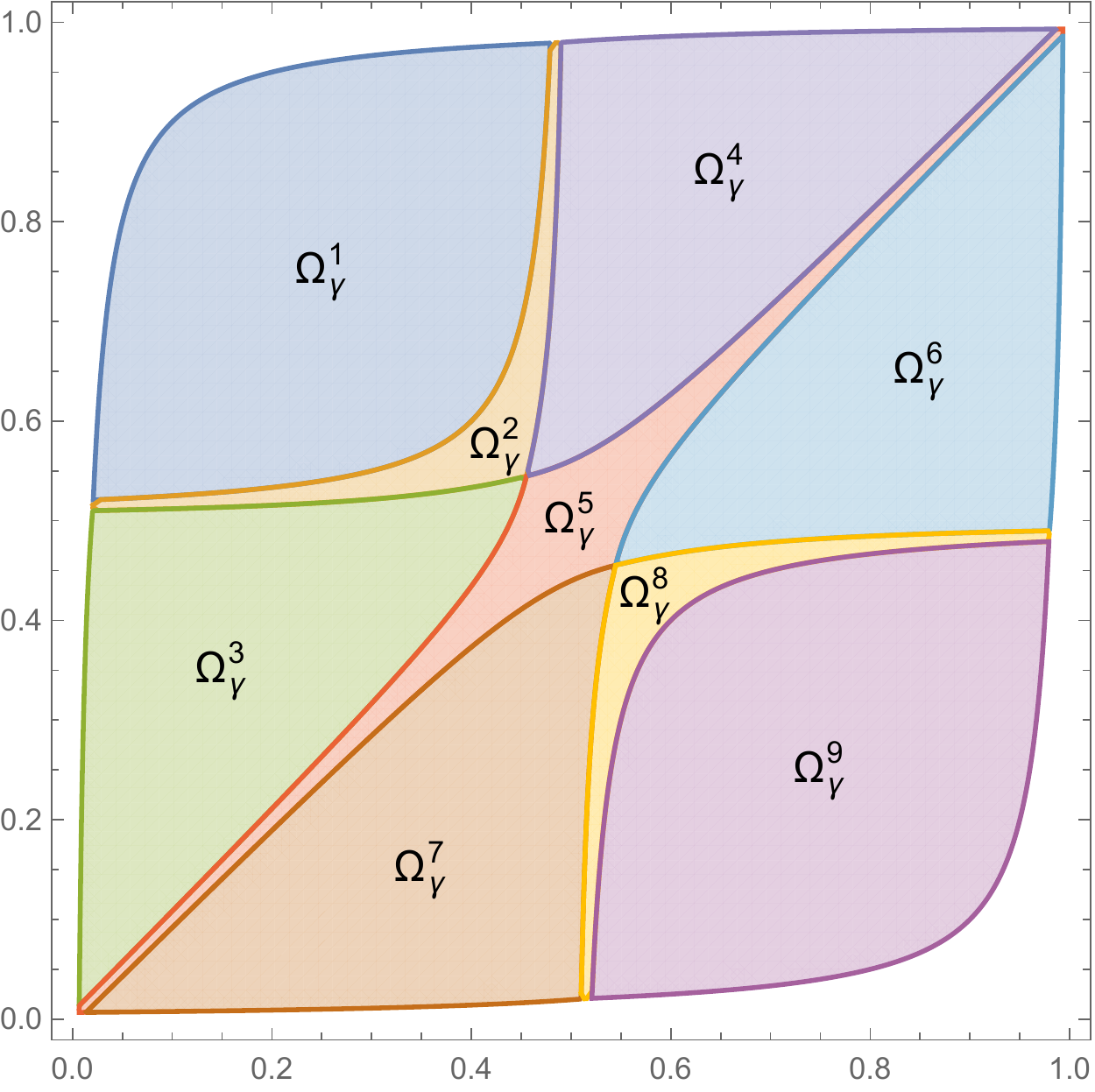} \hfil
            \includegraphics[width=5cm]{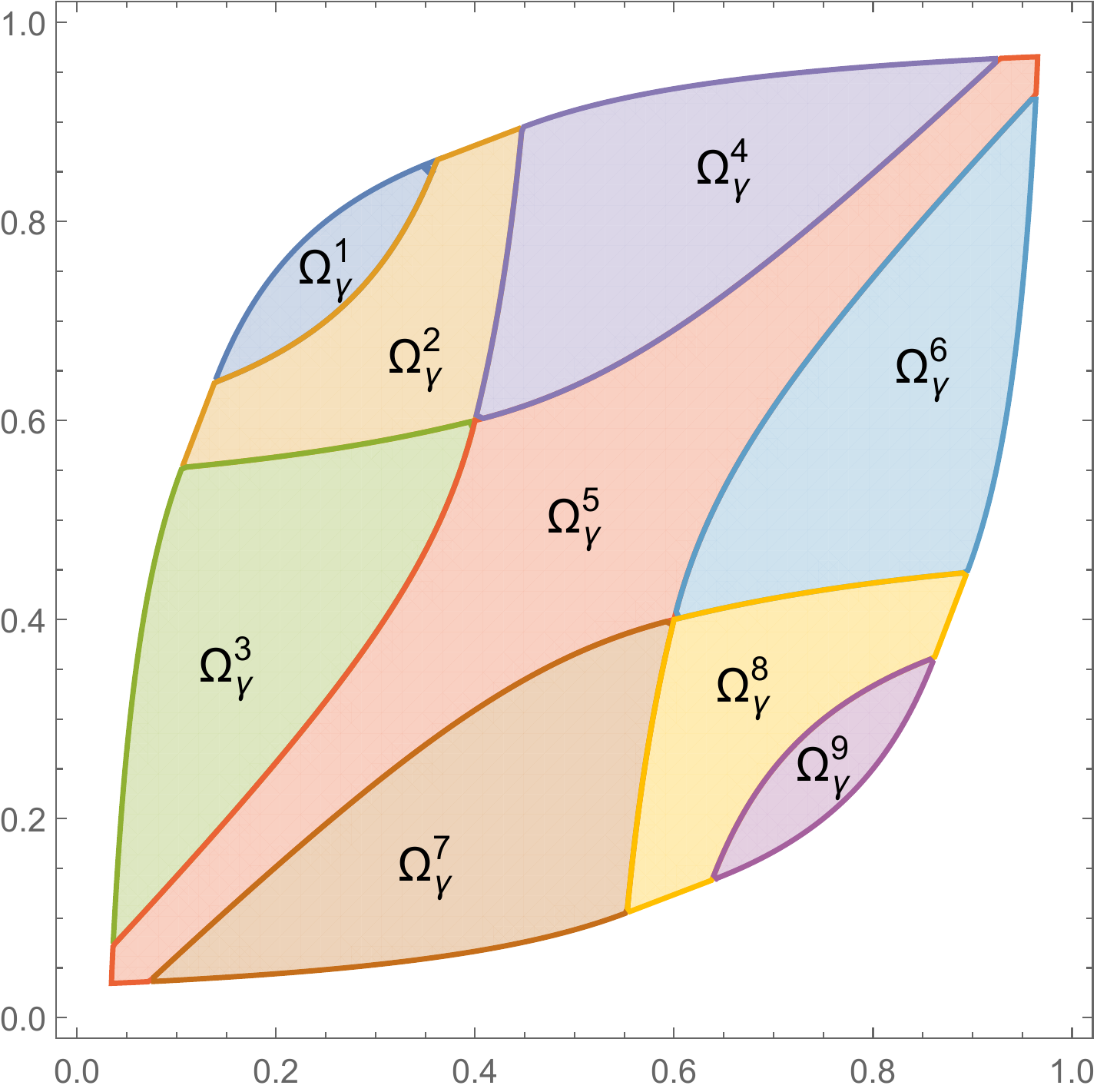} \\
			\includegraphics[width=5cm]{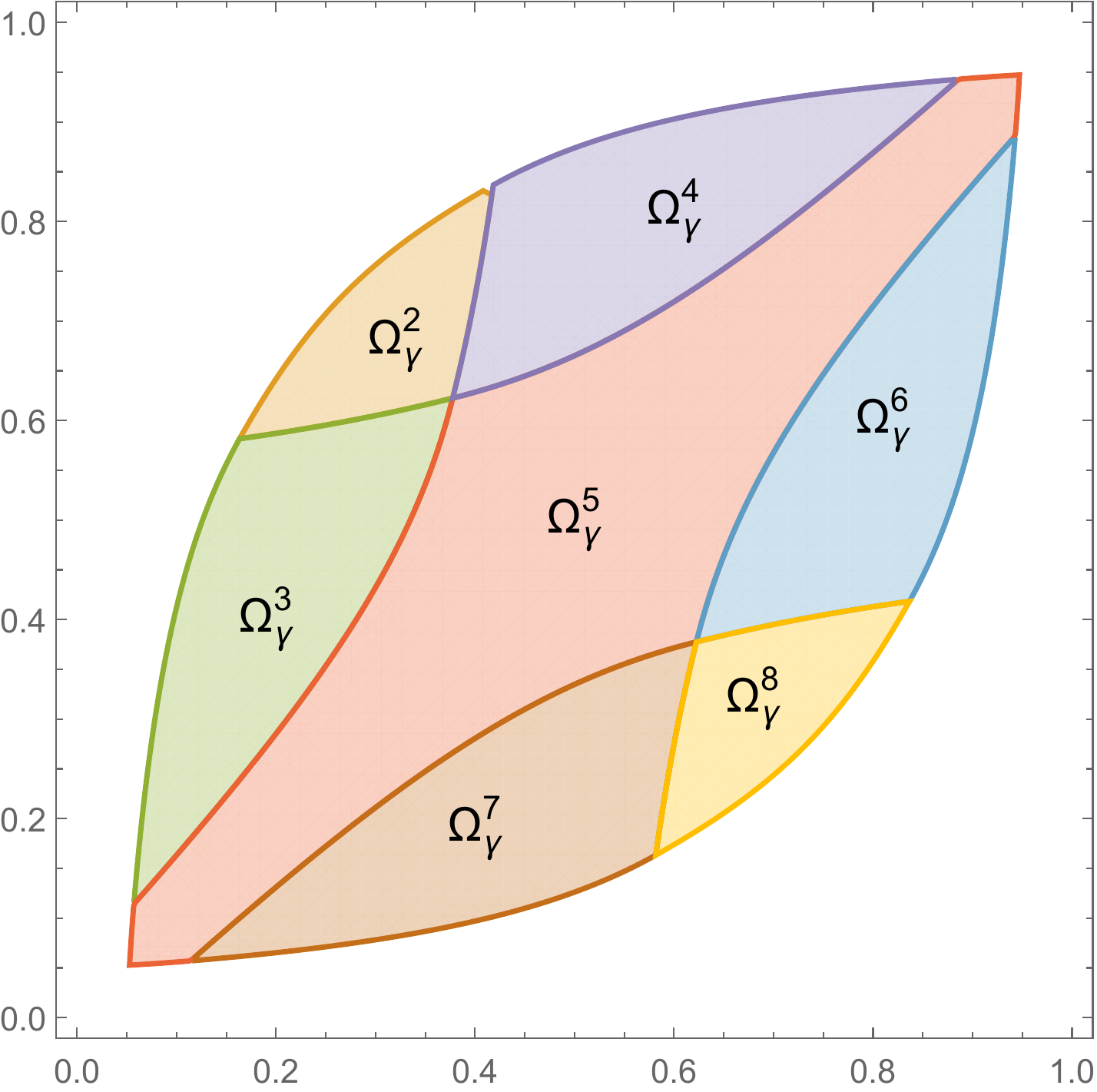} \hfil
            \includegraphics[width=5cm]{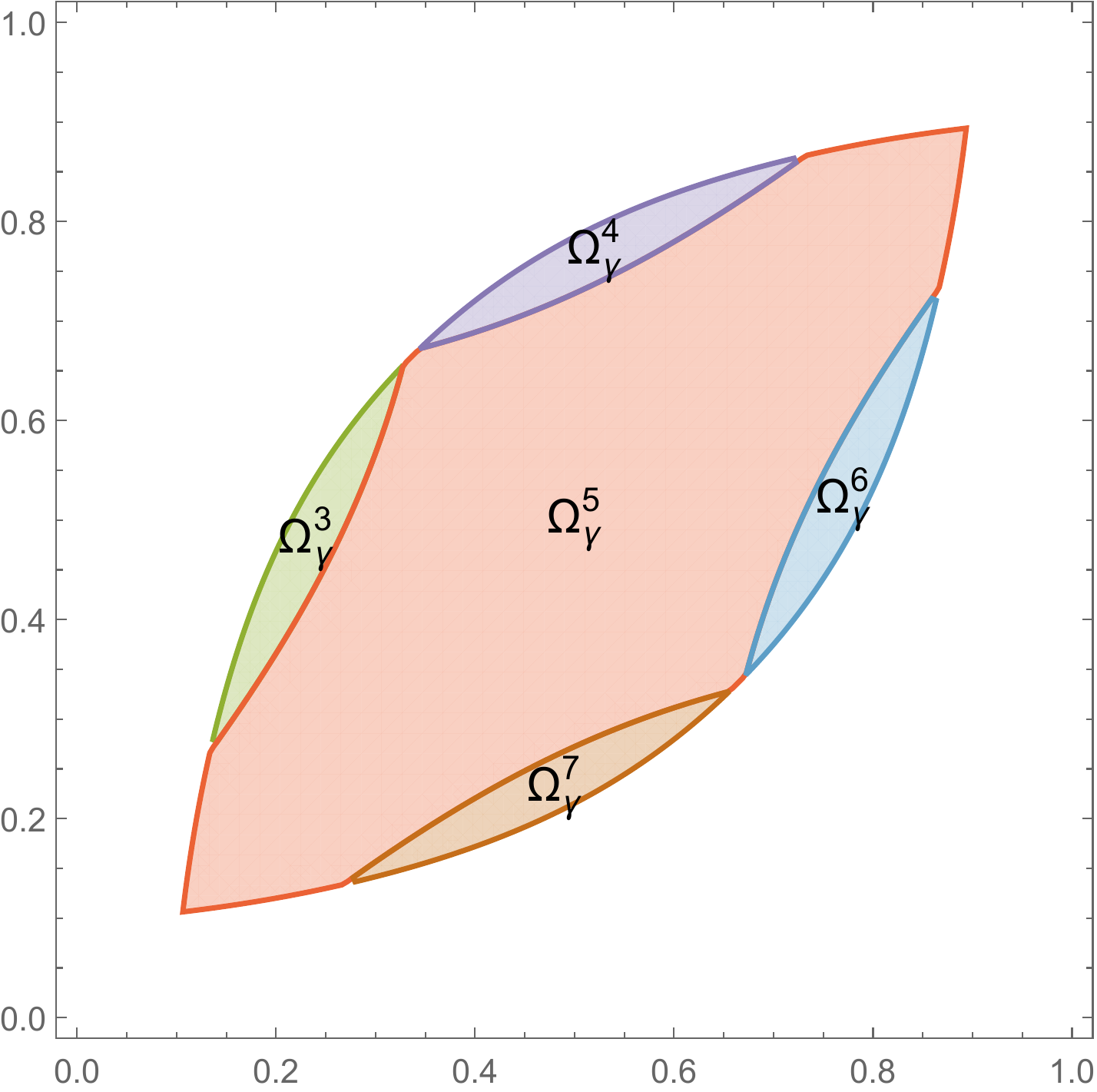} \hfil \includegraphics[width=5cm]{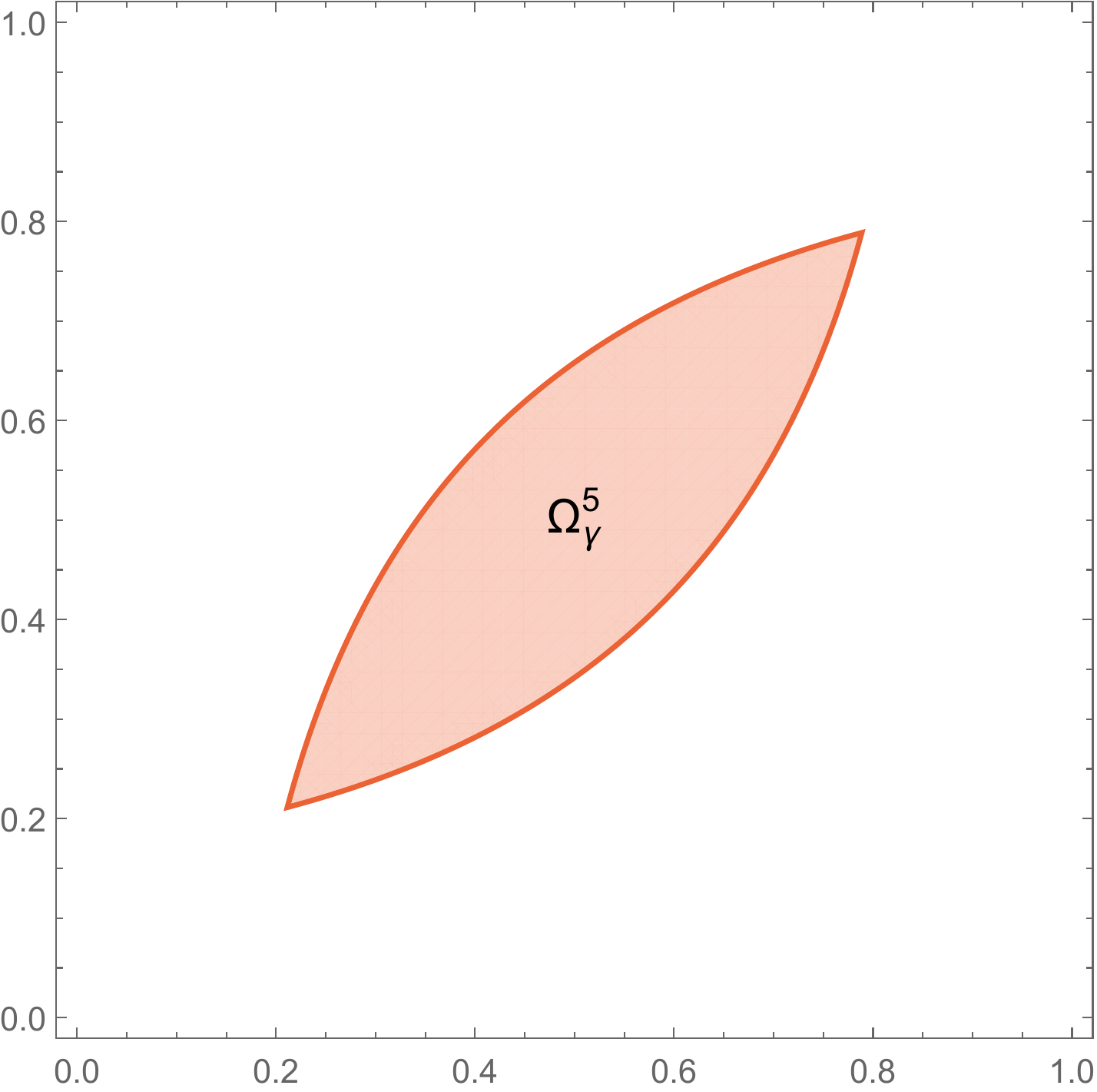}
            \caption{ Regionplots of the areas $\Omega_\gamma^1, \ldots, \Omega_\gamma^9$ for  $\gamma = -1, -\frac{24}{25},-\frac45, -\frac{7}{10}, -\frac{43}{100}, 0$.} \label{fig obmocja gamma}
\end{figure}

We also define functions of $\gamma$ depending on $(a,b)$ on these areas. Note that for a fixed value of $(a,b)$ they are inverses of $\underline{g}_{a,b}$. They are
\begin{align*}\label{omega_ab^i}
\omega_{a,b}^1(\gamma)&=\frac12 \left(a+b-1+\sqrt{(a+b-1)^2+1+\gamma}\right),\\
\omega_{a,b}^2(\gamma)&=\frac14 \left(a+3b-2+\sqrt{(a+b-1)^2+(1-2a)(1-2b)+2(1+\gamma)}\right),\\
\omega_{a,b}^3(\gamma)&=\frac17 \left(2a+4b-3+\sqrt{(2a+4b-3)^2+7(1+\gamma)}\right),\\
\omega_{a,b}^4(\gamma)&=\frac17 \left(3a+5b-4+\sqrt{(4a+2b-3)^2+7(1+\gamma)}\right),\\
\omega_{a,b}^5(\gamma)&=\frac12 \left(a+b-1+\frac{\sqrt{3}}{3}\sqrt{5(a+b-1)^2-2(1-2a)(1-2b)+2(1+\gamma)}\right),\\
\omega_{a,b}^6(\gamma)&=\frac17 \left(5a+3b-4+\sqrt{(2a+4b-3)^2+7(1+\gamma)}\right),\\
\omega_{a,b}^7(\gamma)&=\frac17 \left(4a+2b-3+\sqrt{(4a+2b-3)^2+7(1+\gamma)}\right),\\
\omega_{a,b}^8(\gamma)&=\frac14 \left(3a+b-2+\sqrt{(a+b-1)^2+(1-2a)(1-2b)+2(1+\gamma)}\right).
\end{align*}

We are now ready to state one of our main results.

\begin{theorem}\label{thm_gamma_upp}
The pointwise supremum $\overline{G}_{\gamma}$ 
of $\GG_{\gamma}$ for any $\gamma\in[-1, 1]$ and for any $(a,b)\in\II^2$ 
is given by
\begin{equation}\label{G_upper}
\overline{G}_{\gamma}(a,b)=\left\{ \begin{array}{ll}
        \omega_{a,b}^1(\gamma);      & \text{if } (a,b) \in \Omega_{\gamma}^1 \cup \Omega_{\gamma}^9 , \vspace{1mm}\\
        \omega_{a,b}^2(\gamma);      & \text{if } (a,b) \in \Omega_{\gamma}^2 , \vspace{1mm}\\
				\omega_{a,b}^3(\gamma);      & \text{if } (a,b) \in \Omega_{\gamma}^3 , \vspace{1mm}\\
				\omega_{a,b}^4(\gamma);      & \text{if } (a,b) \in \Omega_{\gamma}^4 , \vspace{1mm}\\
				\omega_{a,b}^5(\gamma);      & \text{if } (a,b) \in \Omega_{\gamma}^5 , \vspace{1mm}\\
				\omega_{a,b}^6(\gamma);      & \text{if } (a,b) \in \Omega_{\gamma}^6 , \vspace{1mm}\\
				\omega_{a,b}^7(\gamma);      & \text{if } (a,b) \in \Omega_{\gamma}^7 , \vspace{1mm}\\
				\omega_{a,b}^8(\gamma);      & \text{if } (a,b) \in \Omega_{\gamma}^8 , \vspace{1mm}\\
        M(a,b);       & \text{otherwise. }
       \end{array} \right.
\end{equation}
\end{theorem}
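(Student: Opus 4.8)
The plan is to follow the template of the proof of Theorem \ref{thm_phi_upp1}, replacing Spearman's footrule by Gini's gamma throughout. By Lemma \ref{lem:symm}\textit{(a)} the supremum $\overline{G}_{\gamma}$ is symmetric and radially symmetric, so it is determined by its values on the fundamental triangle $\Delta=\{(a,b)\in\II^2;\,a\leqslant b,\,a+b\leqslant 1\}$ of the order-four group generated by the reflections over the main diagonal and the counter-diagonal; the values on the rest of the square are then recovered by reflecting. Since $\underline{g}_{a,b}$ and $\overline{g}_{a,b}$ are monotone and bracket $\gamma(C)$ for every $C$ with $C(a,b)=d$, exactly as in Section \ref{sec:footrule} the largest admissible value of $d=C(a,b)$ is governed by the lower function $\underline{g}_{a,b}$: the supremum equals $\underline{g}_{a,b}^{-1}(\gamma)$ when this does not exceed $M(a,b)$, and equals $M(a,b)$ otherwise.

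First I would compute $\underline{g}_{a,b}(d)$ explicitly on $\Delta$. By \eqref{gamma} and the symmetry of $\cQ$ we have
\begin{equation*}
\underline{g}_{a,b}(d)=\gamma\!\left(\underline{C}^{(a,b)}_{c_1}\right)=\cQ\!\left(M,\underline{C}^{(a,b)}_{c_1}\right)+\cQ\!\left(W,\underline{C}^{(a,b)}_{c_1}\right),
\end{equation*}
so I would feed Proposition \ref{prop1}\textit{(a)} and \textit{(c)} into this sum with $d_1=d$ (recall $W(a,b)=0$ on $\Delta$). On $\Delta$ one has $a\leqslant\frac12$ and $0\leqslant d\leqslant a$, so the cases of Proposition \ref{prop1}\textit{(a)} that require $a$ large or $b\leqslant a-d$, namely the last four of the nine, become vacuous, and $\cQ(M,\underline{C}^{(a,b)}_{c_1})$ reduces to a handful of pieces in $d$; adding the single closed-form term $\cQ(W,\underline{C}^{(a,b)}_{c_1})=4d(1-a-b+d)-1$ from Proposition \ref{prop1}\textit{(c)} then yields a piecewise-polynomial expression for $\underline{g}_{a,b}(d)$ on $\Delta$.

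Next I would invert piecewise. Each piece of $\underline{g}_{a,b}$ is increasing in $d$ by monotonicity of $\cQ$, so solving $\underline{g}_{a,b}(d)=\gamma$ on each piece produces the closed-form branches $\omega^i_{a,b}(\gamma)$, while the $d$-breakpoints separating the pieces, rewritten in terms of $\gamma$, become the defining semialgebraic inequalities of the regions $\Omega^i_{\gamma}$ inside $\Delta$; the threshold $\gamma\geqslant\underline{g}_{a,b}(a)$ singles out the region where the bound saturates at $M(a,b)$, producing the ``otherwise'' branch of \eqref{G_upper}. (For instance, the case $\cQ(M,\cdot)=0$ gives $\gamma=\cQ(W,\cdot)$ and hence $\omega^1_{a,b}$, and the central case gives $\omega^5_{a,b}$.) Finally, reflecting the branches obtained on $\Delta$ over the main diagonal and the counter-diagonal, using that $\omega^1_{a,b}$ and $\omega^5_{a,b}$ are already symmetric in $a,b$ while the remaining branches come in diagonal/counter-diagonal pairs, produces all eight functions $\omega^1,\dots,\omega^8$ and all nine regions $\Omega^1,\dots,\Omega^9$, which by the symmetry of $\overline{G}_{\gamma}$ gives \eqref{G_upper} on the whole square.

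The main obstacle is the combinatorial and algebraic bookkeeping. Compared with Spearman's footrule, Gini's gamma carries the extra term $\cQ(W,\cdot)$, which couples the two diagonals; this is precisely why one obtains nine regions instead of seven and why each branch involves a genuinely different quadratic radical. Tracking which of the nine cases of Proposition \ref{prop1}\textit{(a)} is active as $d$ runs over $[0,a]$, assembling $\underline{g}_{a,b}$, inverting each radical branch, and verifying that the $d$-thresholds translate into exactly the stated descriptions of the $\Omega^i_{\gamma}$, including the degenerate configurations at $\gamma=-1$, is the heavy part, and, as in the proof of Theorem \ref{thm_phi_upp1}, I would carry it out with the aid of computer algebra.
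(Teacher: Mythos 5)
Your proposal is correct and follows essentially the same route as the paper's proof: reduction to the triangle $\Delta$ by Lemma \ref{lem:symm}\textit{(a)}, assembly of the piecewise expression for $\underline{g}_{a,b}$ from Proposition \ref{prop1}\textit{(a)} and \textit{(c)} (with exactly the observation that only the first few cases survive on $\Delta$ since $W(a,b)=0$ and $0\leqslant d\leqslant a$), piecewise inversion in $d$ with the threshold $\gamma\geqslant\underline{g}_{a,b}(a)$ producing the $M(a,b)$ branch, and reflection over the two diagonals to obtain all $\omega^i$ and $\Omega^i_\gamma$, with computer algebra handling the bookkeeping. No gaps beyond those the paper itself leaves to Mathematica.
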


Before we give the proof we gather some observations:

\begin{corollary}\label{cor gamma1}
Suppose that $\overline{G}_{\gamma}$ is the supremum given in Theorem \ref{thm_gamma_upp}. Then:
\begin{enumerate}[(i)]
 \item We have $\overline{G}_{-1}=W$ and $\overline{G}_{\gamma}=M$ for $\gamma \in [\frac12, 1]$.
 \item For any $\gamma \in (-1, 0)$ the supremum $\overline{G}_{\gamma}$ is not a copula, but a proper quasi-copula.
 \item For any $\gamma\in [0,\frac12)$ the supremum $\overline{G}_{\gamma}$ is a copula  that is different from Fr\'echet-Hoeffding lower and upper bounds $W$ and $M$. It is singular. Its absolutely continuous part is distributed inside the bounded region enclosed by the graphs of hyperbolas $\omega^5_{a,b}=a$ and $\omega^5_{a,b}=b$ (as functions of $a$ and $b$). Its singular  component is distributed on the boundary of the region and on the two segments of the diagonal $a=b$ outside the region. (See Figure \ref{g zgoraj1}.)
 \item $\overline{G}_{\gamma}$ is increasing in $\gamma$ (in the concordance order on quasi-copulas).
 \item $\overline{G}_{\gamma}$ is symmetric and radially symmetric. 
 \item If we extend the measure of concordance $\gamma$ to any quasi-copula $Q$ by defining \begin{equation}\label{ext_gam}
     \gamma(Q)=4\int_0^1 \left(Q(t,t)+Q(t,1-t)\right) dt - 2
     \end{equation}
     then 
     $\gamma\left(\overline{G}_{\gamma}\right)>\gamma$ for all $\gamma\in(-1,1)$. (See Figure \ref{gamma(gamma)}.)
 \end{enumerate}
\end{corollary}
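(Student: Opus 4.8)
The plan is to dispatch the six items using the explicit formula \eqref{G_upper}, the region analysis preceding Theorem \ref{thm_gamma_upp}, and the two footrule corollaries as templates. Items (i), (iv) and (v) are essentially bookkeeping. For (i) I would substitute $1+\gamma=0$ into each $\omega_{a,b}^i$: on every full-dimensional piece at $\gamma=-1$ the radicand becomes a perfect square, so the radical collapses to an absolute value and $\omega_{a,b}^i(-1)$ reduces to either $0$ or $a+b-1$ according to the sign of the relevant linear form, matching $W(a,b)=\max\{0,a+b-1\}$; the pieces $\Omega^2_{-1},\Omega^5_{-1},\Omega^8_{-1}$ degenerate to null sets, so continuity of the quasi-copula forces $\overline{G}_{-1}=W$. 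For $\gamma\in[\tfrac12,1]$ all $\Omega_\gamma^i$ are empty, leaving only the branch $M$, whence $\overline{G}_\gamma=M$. Item (v) is a direct citation of Lemma \ref{lem:symm}(a), valid because Gini's gamma satisfies (C1) and (C8). For (iv) I would argue as for $\overline{F}_\phi$ in Corollary \ref{cor phi2}(iii): each $\omega_{a,b}^i(\gamma)$ carries $1+\gamma$ with a positive coefficient under its square root, so $\partial_\gamma\omega_{a,b}^i>0$, and since the branches glue continuously and the cap $M$ is constant, $\overline{G}_\gamma$ is pointwise increasing in $\gamma$.

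The heart of the matter is the dichotomy in (ii)--(iii), and the key computation is the mixed second derivative of the central branch. Writing $S=5(a+b-1)^2-2(1-2a)(1-2b)+2(1+\gamma)$, a short computation gives
\[
\frac{\partial^2\omega_{a,b}^5(\gamma)}{\partial a\,\partial b}=\frac{1+\gamma-12\bigl(a-\tfrac12\bigr)\bigl(b-\tfrac12\bigr)}{\sqrt3\,S^{3/2}},
\]
so the absolutely continuous density carried inside $\Omega_\gamma^5$ has the sign of $1+\gamma-12(a-\tfrac12)(b-\tfrac12)$. On the diagonal $a=b$ one checks that $\Omega_\gamma^5$ is the segment $(a-\tfrac12)^2<\tfrac{1-2\gamma}{12}$, while the density is negative precisely where $(a-\tfrac12)^2>\tfrac{1+\gamma}{12}$; these two conditions are simultaneously satisfiable iff $1+\gamma<1-2\gamma$, i.e.\ iff $\gamma<0$. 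Hence for $\gamma\in(-1,0)$ the density is strictly negative on an open subrectangle of $\Omega_\gamma^5$, so $\overline{G}_\gamma$ is not $2$-increasing and, by \cite[Theorem 2.1]{DuSe}, not a copula; being a pointwise supremum of copulas it is nevertheless a quasi-copula by the lattice structure of \cite{NeUbF2}, which proves (ii).

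For (iii), when $\gamma\in[0,\tfrac12)$ only $\Omega_\gamma^5$ is nonempty, and the inequality $(a-\tfrac12)(b-\tfrac12)\le\tfrac{1-2\gamma}{12}\le\tfrac{1+\gamma}{12}$ holds throughout $\Omega_\gamma^5$, the first step requiring that the maximum of $(a-\tfrac12)(b-\tfrac12)$ over the region bounded by the two hyperbolas $\omega^5_{a,b}=a$ and $\omega^5_{a,b}=b$ be attained on the diagonal; thus the density is nonnegative. Combining this with nonnegativity of the jumps of the first-order Dini derivatives across those hyperbolas, which I would verify through \cite[Theorem 2.1]{DuJa} exactly as $\underline{F}_\phi$ was shown to be a copula in Corollary \ref{cor_phi}, yields that $\overline{G}_\gamma$ is $2$-increasing, hence a copula. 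The support then reads off the formula: the density lives inside $\Omega_\gamma^5$, off $\Omega_\gamma^5$ we have $\overline{G}_\gamma=M$ whose mass sits on the two diagonal segments outside the region, and the gluing deposits the remaining singular mass on the boundary hyperbolas; finally $\overline{G}_\gamma(\tfrac12,\tfrac12)=\tfrac1{\sqrt6}\sqrt{1+\gamma}\in(0,\tfrac12)$ shows $\overline{G}_\gamma\neq W,M$.

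Finally, for (vi), a soft argument gives one direction: the extended functional \eqref{ext_gam} is monotone for the pointwise order, and $\overline{G}_\gamma\ge C$ for every $C\in\GG_\gamma$, so $\gamma(\overline{G}_\gamma)\ge\gamma$. To obtain the strict inequality on all of $(-1,1)$ I would, mirroring Corollary \ref{cor phi2}(v), evaluate $\gamma(\overline{G}_\gamma)$ in closed form by integrating the active branches of \eqref{G_upper} along the diagonal and the anti-diagonal, and then check that the resulting (piecewise in $\gamma$) expression strictly exceeds $\gamma$, with equality only at the endpoints. I expect the main obstacle to be item (iii): showing the absolutely continuous density stays nonnegative over all of $\Omega_\gamma^5$ rather than merely along the diagonal requires controlling $(a-\tfrac12)(b-\tfrac12)$ over the hyperbola-bounded region, and the accompanying verification that the singular jumps along those hyperbolas are nonnegative is the delicate point; the closed-form integration in (vi) is heavy but routine, and (i), (iv), (v) reduce to substitution, differentiation, and an appeal to Lemma \ref{lem:symm}.
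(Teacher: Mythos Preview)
Your proposal is correct and follows essentially the same route as the paper: the same mixed-derivative computation for $\omega_{a,b}^5$, the same Dini-derivative verification via \cite{DuJa} for (iii), the same appeal to Lemma~\ref{lem:symm} for (v), and the same ``omitted careful analysis/technical calculation'' stance for (iv) and (vi). The one place where the paper is slightly cleaner than your plan is precisely the point you flag as the main obstacle in (iii): rather than maximizing $(a-\tfrac12)(b-\tfrac12)$ over the hyperbola-bounded region $\Omega_\gamma^5$, the paper observes that $\Omega_\gamma^5$ is contained in the square $\bigl[\tfrac12-\tfrac{\sqrt3}{6}\sqrt{1-2\gamma},\,\tfrac12+\tfrac{\sqrt3}{6}\sqrt{1-2\gamma}\bigr]^2$ (its diagonal endpoints being the diagonal endpoints of $\Omega_\gamma^5$), and on that square the product $(a-\tfrac12)(b-\tfrac12)$ is trivially bounded by $\tfrac{1-2\gamma}{12}\le\tfrac{1+\gamma}{12}$ for $\gamma\ge 0$, so the numerator $1+\gamma-12(a-\tfrac12)(b-\tfrac12)$ is nonnegative throughout.
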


\begin{proof} First notice that $\overline{G}_{\gamma}$ equals the Fr\'echet-Hoeffding upper bound $M$ for any $\gamma \in [\frac12, 1]$, since then all the regions $\Omega_{\gamma}^i$ disappear. If $\gamma = -1$, then $\overline{G}_{-1}=W$. For any $\gamma \in (-1, 0)$ the point
$$\left(\frac12 \left(1- \frac{\sqrt{3}}{3} \sqrt{1-2 \gamma} \right), \frac12 \left(1- \frac{\sqrt{3}}{3} \sqrt{1-2 \gamma} \right)\right)$$
lies in the lower left corner of the area $\Omega_{\gamma}^5$ and the mixed second derivative $\frac{\partial^2 \omega_{a,b}^5(\gamma)}{\partial a \partial b}$ at this point has its value equal to $\frac{\gamma}{3}<0$, so the mixed second derivative is negative also inside the area $\Omega_{\gamma}^5$ due to continuity. Therefore, \cite[Theorem 2.1]{DuSe} implies that $\overline{G}_{\gamma}$ is {not} a copula for $\gamma \in (-1, 0)$. So, \textit{(i)} and \textit{(ii)} hold.

Next, assume that $\gamma\in[0,\frac12)$. Observe that only the area $\Omega^5_{\gamma}$ has nonempty interior for such $\gamma$ and so
$$ \overline{G}_{\gamma}(a,b)=\left\{ \begin{array}{ll}
        \omega_{a,b}^5(\gamma)& \text{if } (a,b) \in \Omega_{\gamma}^5 , \vspace{1mm}\\
        M(a,b);       & \text{otherwise. }
       \end{array} \right.$$
Also, note that $\Omega^5_{\gamma}$ is the area bounded by the graphs of hyperbolas $\omega^5_{a,b}=a$ and $\omega^5_{a,b}=b$. To prove that $\overline{G}_{\gamma}$  is a copula we use \cite[Theorem 2.1]{DuJa}. (See \cite{DuJa} for the definitions of Dini's derivatives as well.) For fixed $b$ the righthand side upper Dini derivative of $\overline{G}_{\gamma}(a,b)$ is
\begin{equation}\label{GG_lower-a}
D^+\overline{G}_{\gamma}(a,b)=\left\{ \begin{array}{ll}
        0; & \text{if } 0\leqslant a\leqslant \min\{b,\omega^5_{a,b}(\gamma)\}, \vspace{1mm}\\
       \frac12 \left(1+\frac{\sqrt{3}\left(5a+b-3\right)}{3\sqrt{5(a+b-1)^2-2(1-2a)(1-2b)+2(1+\gamma)}}\right);      & \text{if } a \text{ is such that } (a,b) \in \Omega_{\gamma}^5, \vspace{1mm}\\
        1;            & \text{otherwise. }
        \end{array} \right.
\end{equation}
For $a\in\II$ such that $(a,b)$ is in the interior of $\Omega_{\gamma}^5$, we have
\begin{equation}\label{GG_lower-ab}
\frac{\partial^2}{\partial a\partial b}\overline{G}_{\gamma}(a,b)=
        \frac{\sqrt{3}(6a+6b-12ab-2+\gamma)}{{3}\left(5(a+b-1)^2-2(1-2a)(1-2b)+2(1+\gamma)\right)^{\frac32}}.
\end{equation}
The diagonal points of $\Omega_{\gamma}^5$ have coordinates $\frac12\pm \frac{\sqrt{3}}{6}\sqrt{1-2\gamma}$. The second derivative in \eqref{GG_lower-ab} is positive on the square $[\frac12- \frac{\sqrt{3}}{6}\sqrt{1-2\gamma},\frac12+ \frac{\sqrt{3}}{6}\sqrt{1-2\gamma}]^2$ since the value of $6a+6b-12ab-2+\gamma$ is positive there and $\Omega_{\gamma}^5$ is contained in the square. The Dini derivative \eqref{GG_lower-a} has a positive jump at points on the boundary of the region that is enclosed by the graphs of hyperbolas $\omega^5_{a,b}=a$ and $\omega^5_{a,b}=b$ and on the two segments of the diagonal $a=b$ outside the region. So, it follows that statement \textit{(iii)} holds.

A careful analysis that we omit shows that $\overline{G}_{\gamma}$ is an increasing function of $\gamma$, as \textit{(iv)} asserts. Statement \textit{(v)} is a consequence of Lemma \ref{lem:symm}.

A technical calculation shows that \textit{(vi)} holds (see Figure \ref{gamma(gamma)} that was drawn using the Mathematica software \cite{Mathematica}).
\end{proof}

In  Figure \ref{g zgoraj} we give the 3D plot of the quasicopulas $\overline{G}_{\gamma}$ for $\gamma = -\frac78, -\frac35, -\frac{2}{13}$. In Figure \ref{g zgoraj1} we give the scatterplots of the copulas $\overline{G}_{\gamma}$ for $\gamma = 0, \frac14$.

\begin{figure}[h]
            \includegraphics[width=6cm]{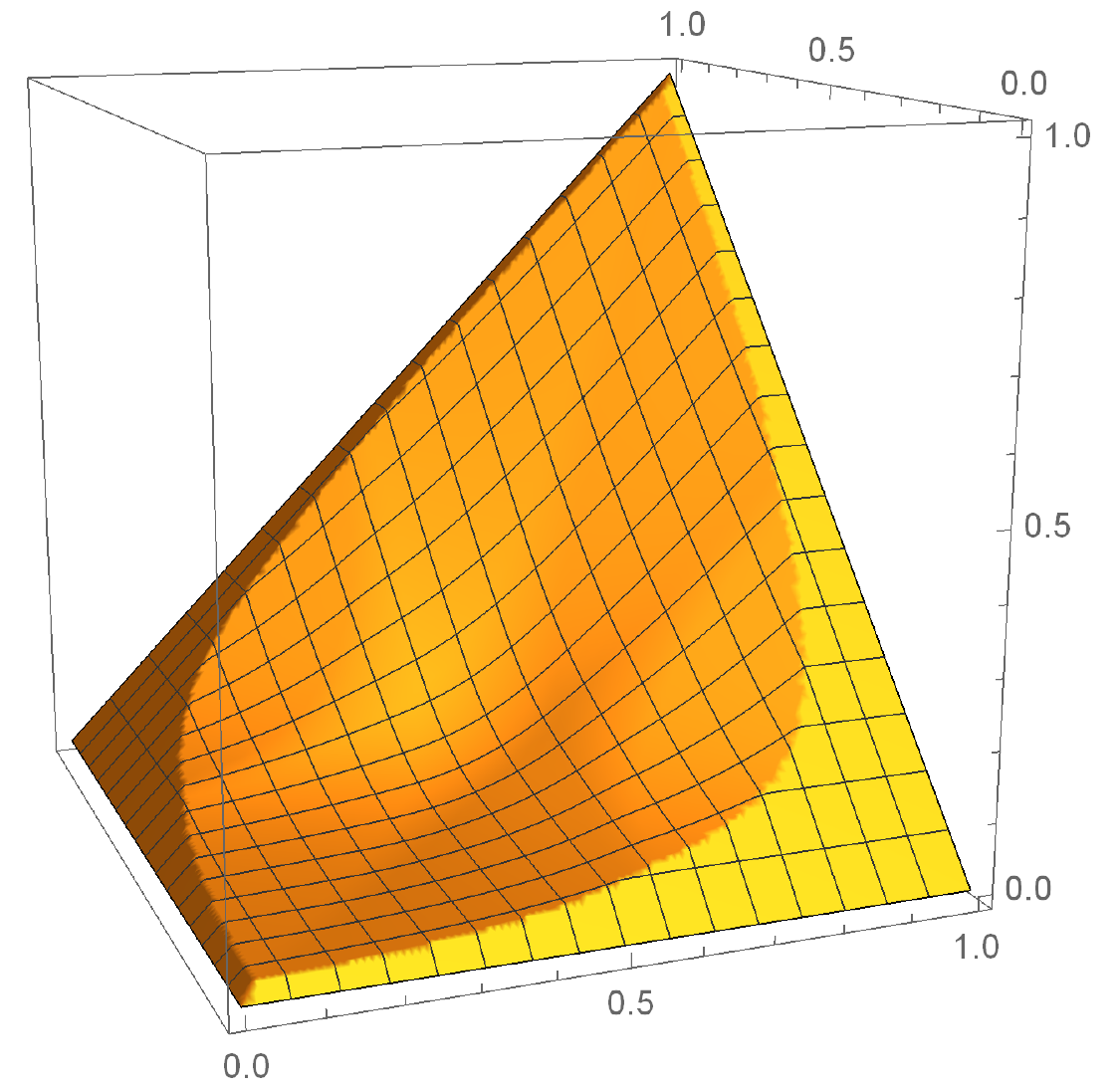}
            \includegraphics[width=5.3cm]{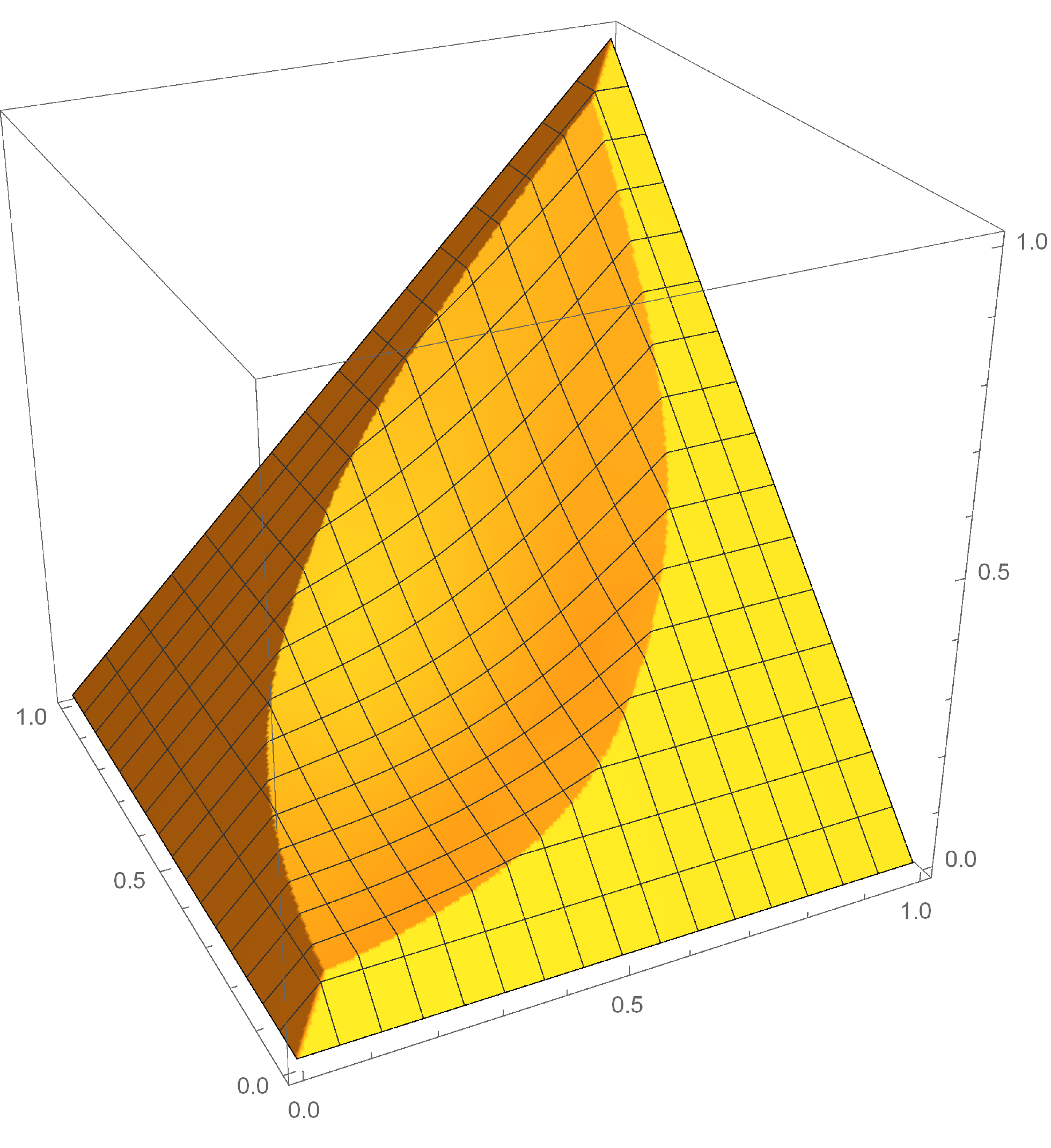}
            \includegraphics[width=6cm]{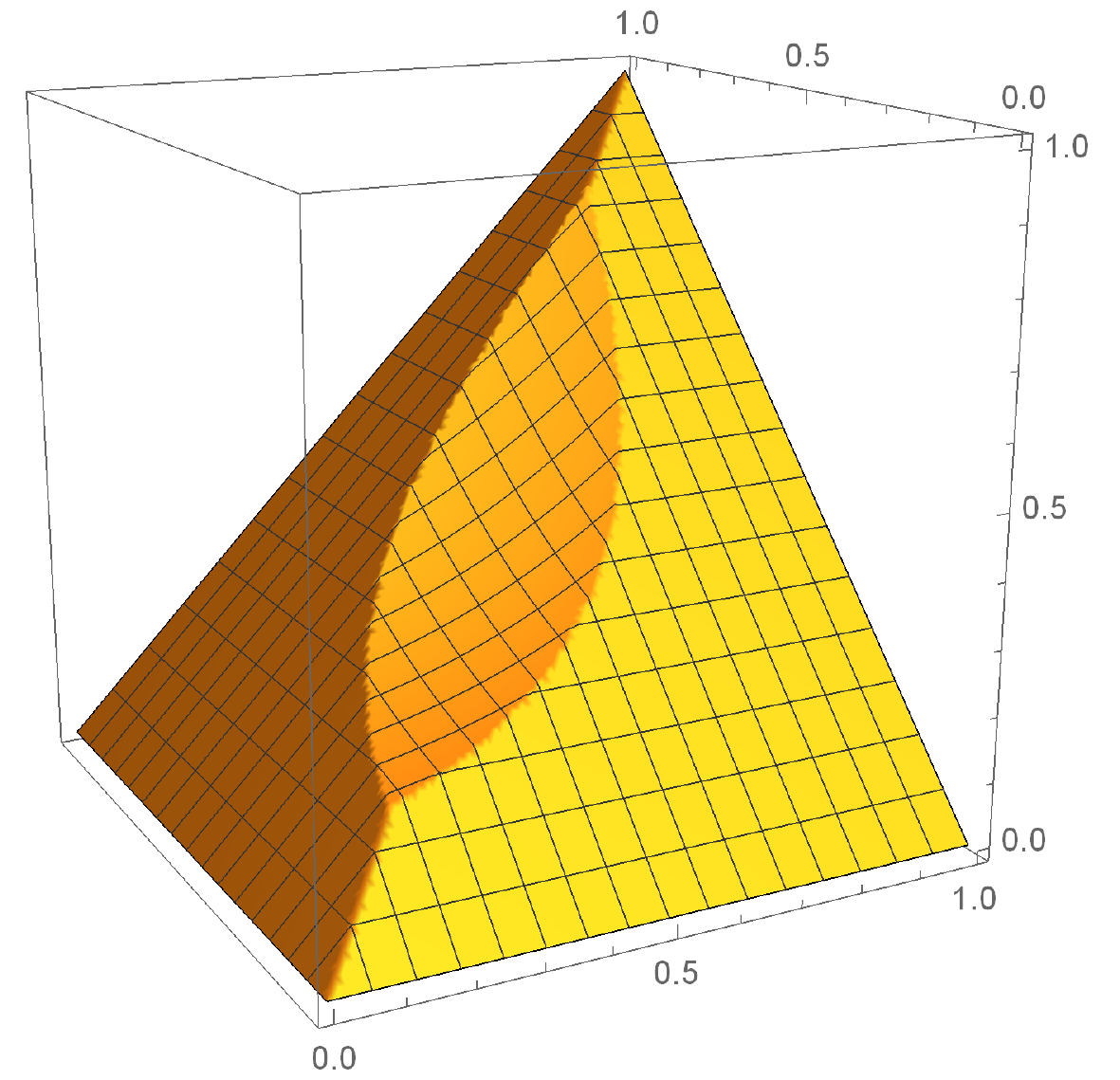}
            \caption{Graphs of quasicopulas $\overline{G}_{\gamma}$ for $\gamma = -\frac78, -\frac35, -\frac{2}{13}$} \label{g zgoraj}
\end{figure}

\begin{figure}[h]
            \includegraphics[width=6cm]{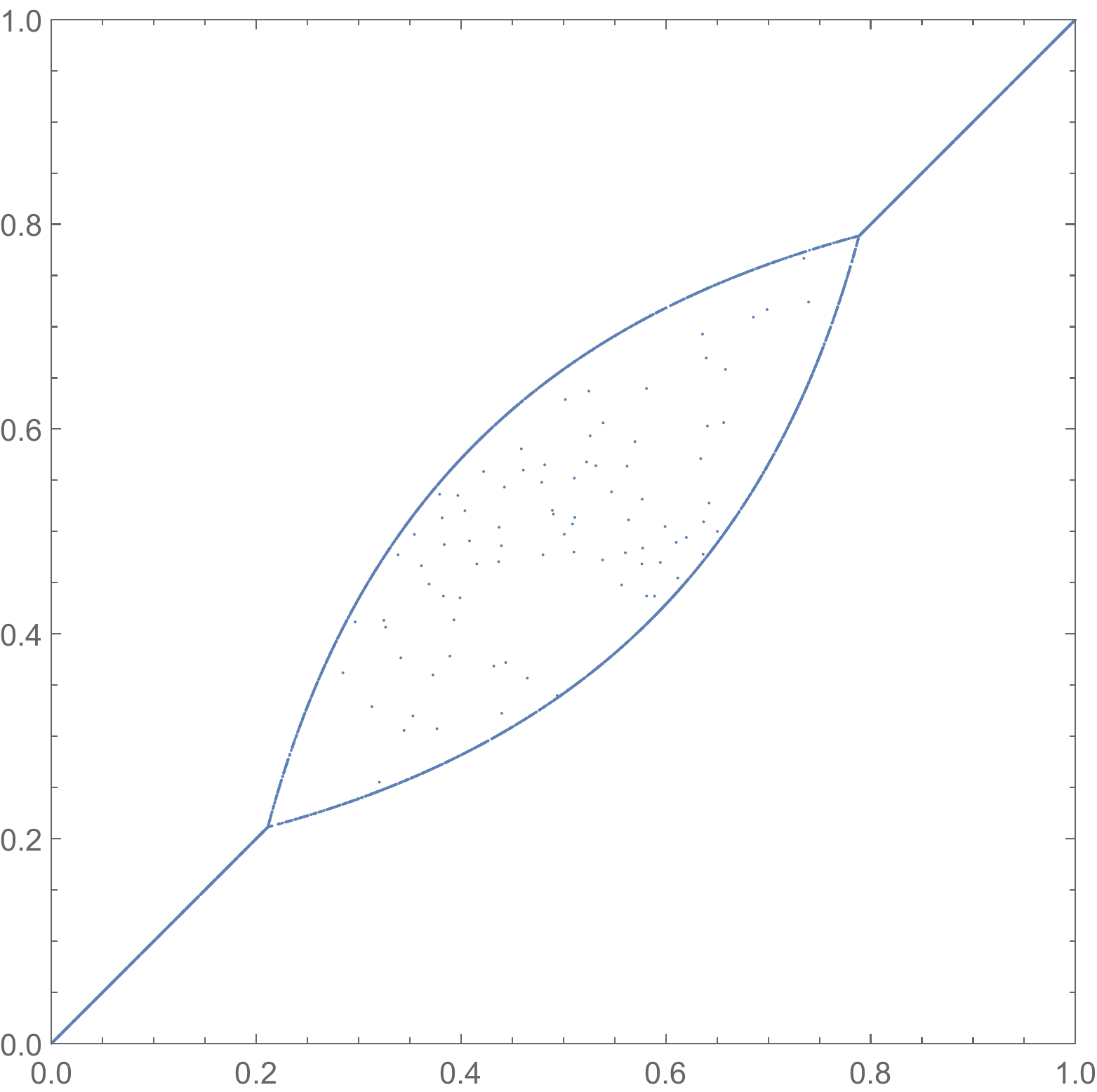} \hfil  \includegraphics[width=6cm]{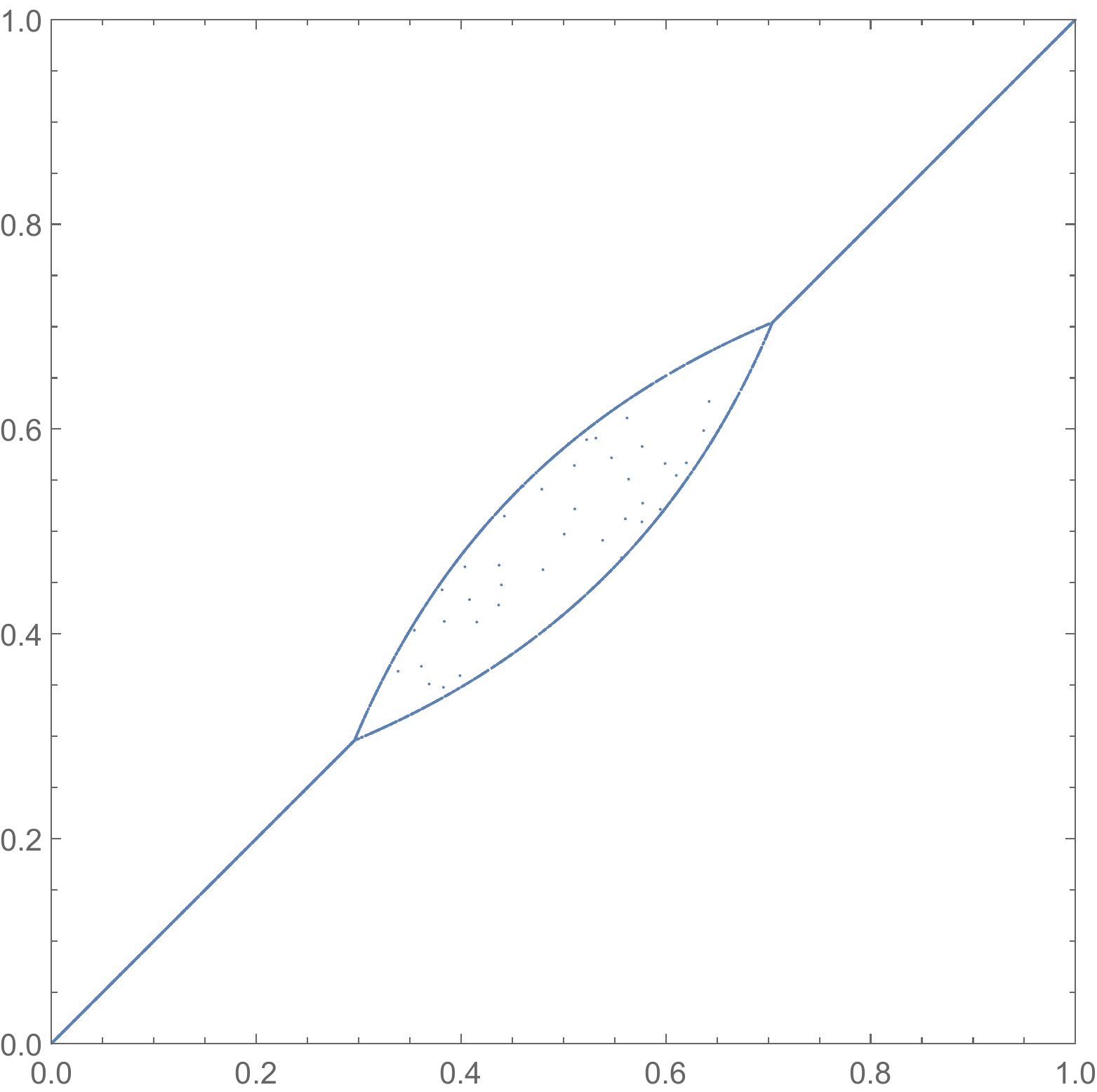}
            \caption{  Scatterplots of copulas $\overline{G}_{\gamma}$ for $\gamma = 0, \frac14$} \label{g zgoraj1}
\end{figure}

\begin{proof}[Proof of Theorem \ref{thm_gamma_upp}]
Due to Lemma \ref{lem:symm}~(a) we may assume that the point $(a,b)$ lies in the triangle $\Delta= \{ (a,b) \in \II^2; \, a \leqslant b, \, a+b \leqslant 1 \}$.
We use Proposition \ref{prop1} to show that for $(a,b)\in\Delta$ we have
\begin{equation}\label{g underline gamma}
  \begin{split}
       \underline{g}_{a,b}(d)&=\gamma\left(\underline{C}^{(a,b)}_{d-W(a,b)}\right)=\cQ\left(M,\underline{C}^{(a,b)}_{c_1}\right)+\cQ\left(W,\underline{C}^{(a,b)}_{c_1}\right)= \\
       &=\left\{ \begin{array}{ll}
        g_{a,b}^1(d);           & \text{if } b \geqslant d + \frac12, \vspace{1mm}\\
        g_{a,b}^2(d);           & \text{if } \frac12(1+d) \leqslant b \leqslant d + \frac12, \vspace{1mm}\\
        g_{a,b}^3(d);           & \text{if } a+d \leqslant b \leqslant \frac12(1+d), \vspace{1mm}\\
        g_{a,b}^5(d);           & \text{if } b \leqslant a+d.
                \end{array} \right. ,
	\end{split}
\end{equation}
where
\begin{align*}
g_{a,b}^1(d)&=4d \left(1-a-b+d\right)-1,\\
g_{a,b}^2(d)&=\left(1-2b+2d\right)^2+4d (1-a-b+d)-1,\\
g_{a,b}^3(d)&= d\left(6-4a-8b+7d\right)-1, \\
g_{a,b}^5(d)&= 6d\left(1-a-b+d\right)- \left(a-b\right)^2-1.
\end{align*}

\begin{figure}[h]
            \includegraphics[width=6cm]{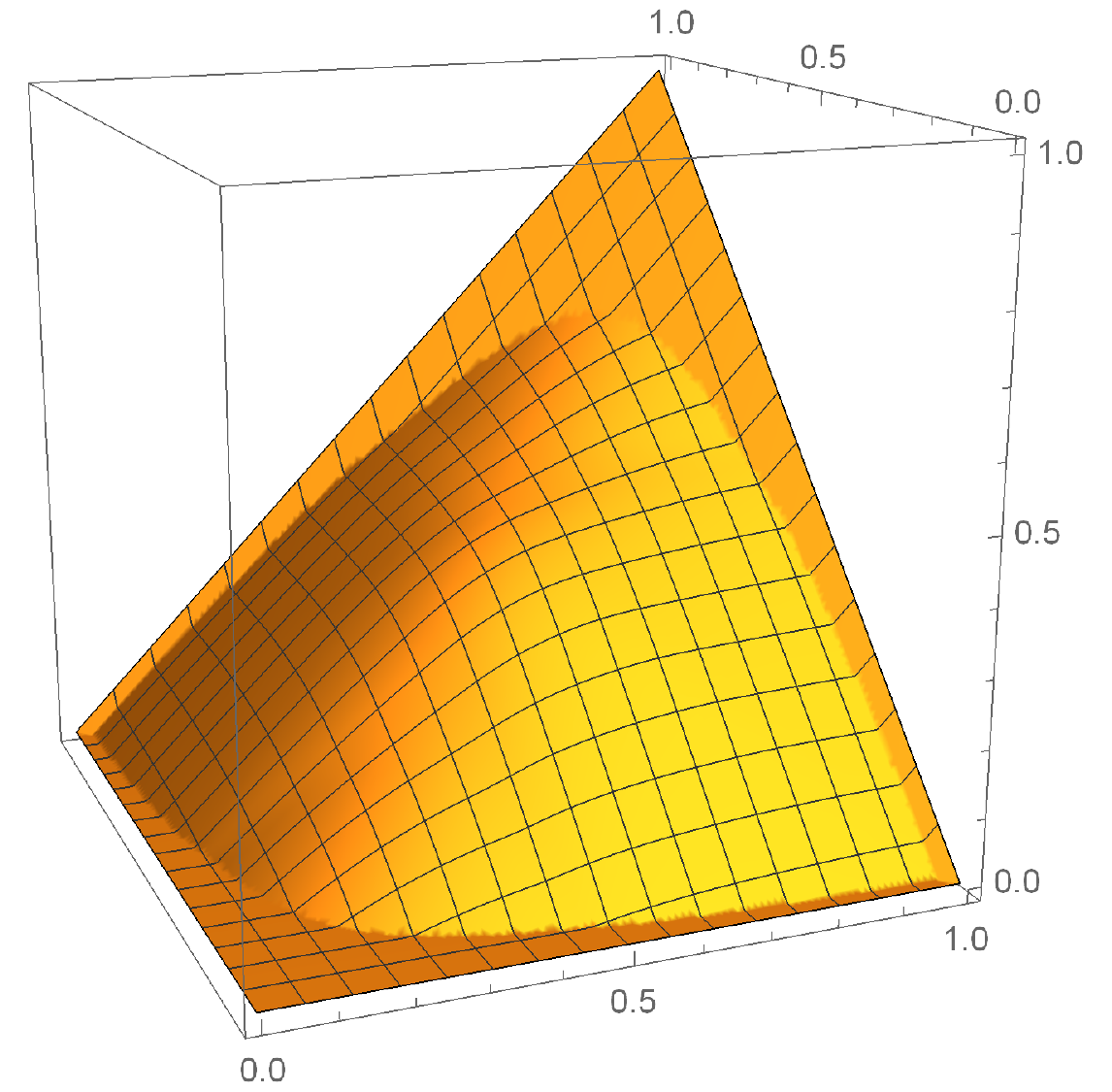}
            \includegraphics[width=5.3cm]{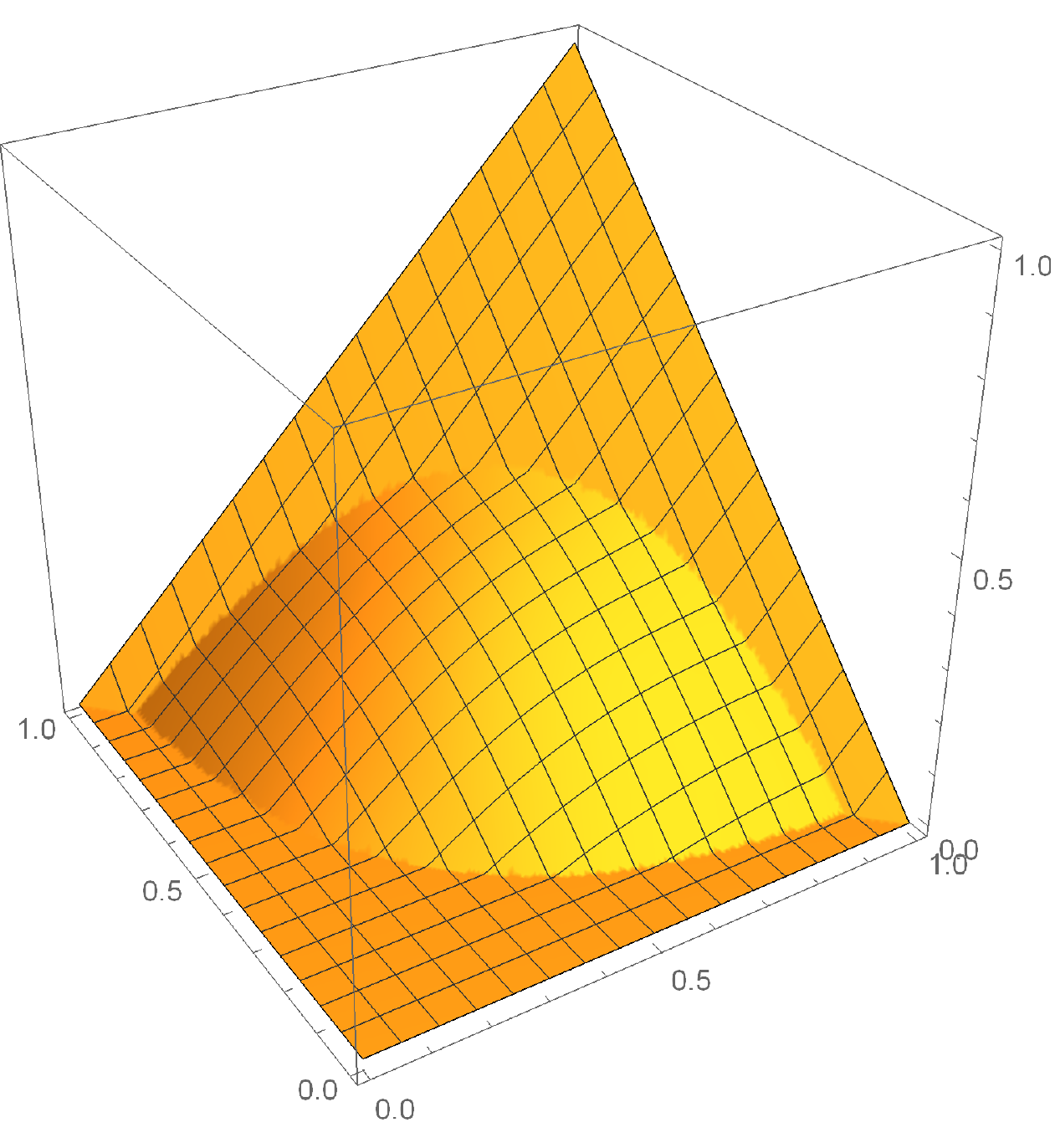}
            \includegraphics[width=6cm]{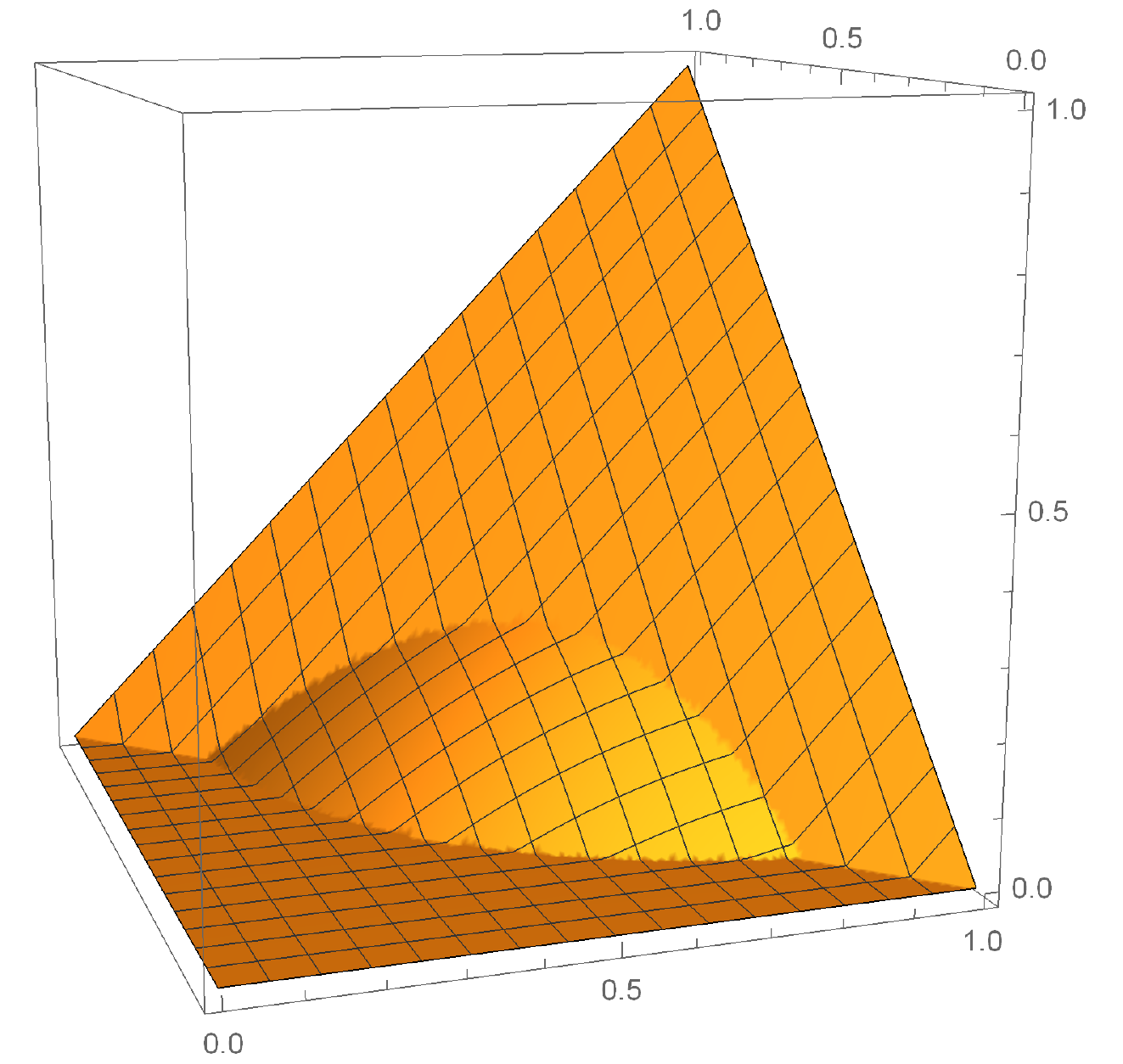}
            \caption{Graphs of quasicopulas $\underline{G}_{\gamma}$ for $\gamma = \frac{2}{13}, \frac35, \frac78$.} \label{g spodaj}
\end{figure}

Since $d=\underline{C}^{(a,b)}_{c_1}(a,b)$
it follows that $W(a,b)=0\leqslant d\leqslant M(a,b)=a$. For such values of $d$ the expression on the right-hand side of \eqref{g underline gamma} is increasing in $d$ and thus, the  maximal possible value of $\underline{g}_{a,b}(d)$ is achieved when $d=a$. Then, we have
$$\underline{g}_{a,b}(a)=\left\{ \begin{array}{ll}
        4a(1-b)-1;                             & \text{if } b \geqslant a + \frac12, \vspace{1mm}\\
        (1+2a-2b)^2+4a(1-b)-1;                     & \text{if } \frac12(1+a) \leqslant b \leqslant a + \frac12, \vspace{1mm}\\
        a(6+3a-8b)-1;		 & \text{if } 2a \leqslant b \leqslant \frac12(1+a), \vspace{1mm}\\
        6a(1-b)-(a-b)^2-1;     & \text{if } b \leqslant 2a.
                \end{array} \right. $$
For the function $\underline{g}_{a,b}:[0, a]\to [-1, \underline{g}_{a,b}(a)]$ we need to find its inverse.
If for a given value $\gamma \in \left[ -1, 1 \right]$ it holds that $\gamma \geqslant \underline{g}_{a,b}(a)$, we take $d=a=M(a,b)$.
Otherwise, we take the inverses of the expressions for $g_{a,b}^i$ which are $\omega_{a,b}^i(\phi)$ for $i=1, 2, 3, 5$.
The inequality $\gamma \geqslant \underline{g}_{a,b}(a)$ gives us the area
\begin{align*}
\bigg\{ (a,b)\in \Delta; \,
& \left( b \geqslant 1- \frac{1+\gamma}{4a}  \textrm{ and } b \geqslant a+\frac12 \right) \textrm { or } \\
& \left( (1+2a-2b)^2+4a(1-b) \leqslant 1+\gamma \textrm{ and } \frac12(1+a) \leqslant b \leqslant a + \frac12 \right) \textrm{ or } \\
& \left( b \geqslant \frac18 \left( 3a+6- \frac{1+\gamma}{a} \right)  \textrm{ and } 2a \leqslant b \leqslant \frac12(1+a) \right) \textrm{ or } \\
& \left( b \geqslant -2a+\sqrt{3a(a+2)-(1+\gamma)} \textrm{ and } b\leqslant 2a \right)
\bigg\}
\end{align*}
which is equal to the area $\Delta \setminus (\Omega^1_\gamma \cup \Omega^2_\gamma\cup \Omega^3_\gamma\cup \Omega^5_\gamma)$. We continue by considering the inequalities $\gamma \geqslant \underline{g}^i_{a,b}(a)$ for $i=1,2,3,5$. Their careful consideration yields areas where each of the expressions $\omega_{a,b}^i(\gamma)$ is valid. These are the areas $\Omega^i_\gamma \cap \Delta$ for $i=1,2,3,5$. Now, we reflect the expressions $\omega_{a,b}^1(\gamma),  \omega_{a,b}^2(\gamma),\omega_{a,b}^3(\gamma),\omega_{a,b}^5(\gamma) $ over the main diagonal and over the counter-diagonal to obtain the expressions $ \omega_{a,b}^1(\gamma), \ldots,  \omega_{a,b}^8(\gamma)$. The areas where they are valid are the reflections of the areas $\Omega^1_\gamma\cap \Delta, \Omega^2_\gamma\cap \Delta, \Omega^3_\gamma\cap \Delta, \Omega^5_\gamma\cap \Delta$ over the main diagonal and over the counter-diagonal, i.e., the areas $\Omega^1_\gamma, \ldots, \Omega^9_\gamma$.

\begin{figure}[h]
            \includegraphics[width=6cm]{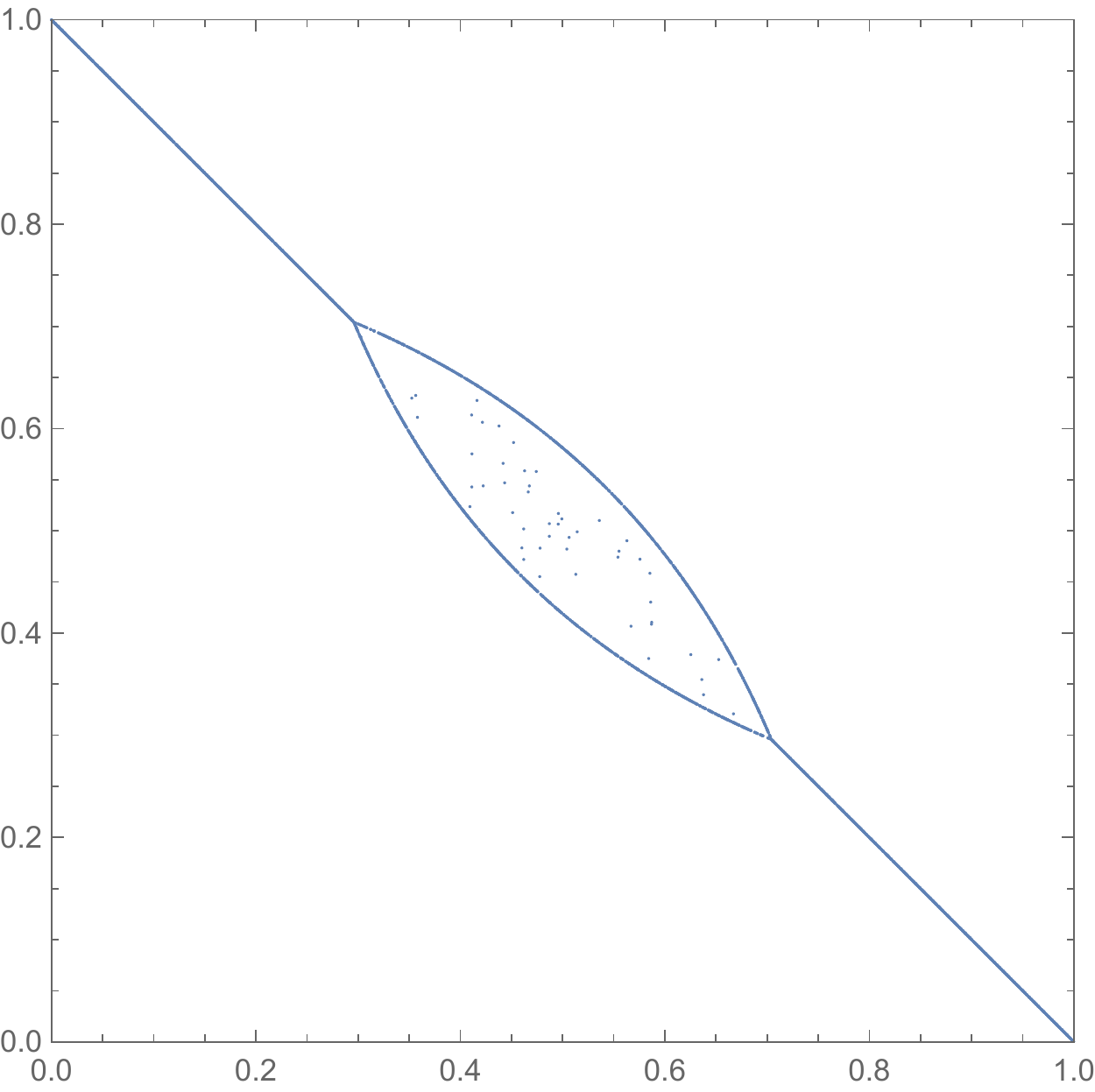} \hfil  \includegraphics[width=6cm]{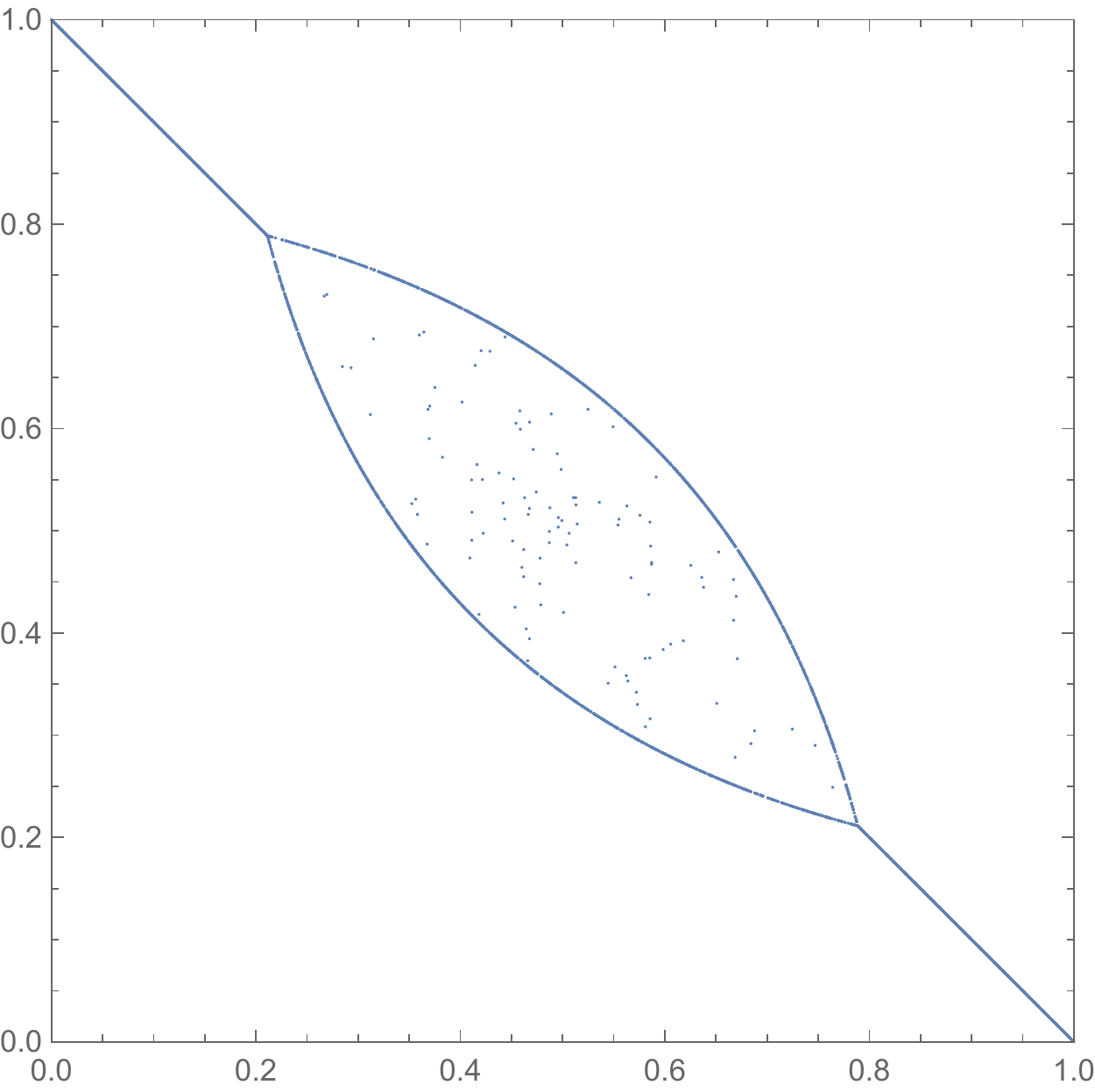}
            \caption{  Scatterplots of copulas $\underline{G}_{\gamma}$ for $\gamma = -\frac14$, $0$.} \label{g spodaj1}
\end{figure}

We conclude that the required upper bound is given by \eqref{G_upper}. The detailed calculations of the functions and their domains were done with a help of  using Wolfram Mathematica software \cite{Mathematica}.
\end{proof}

A direct consequence of Lemma \ref{lem:symm} (b) is the following corollary.

\begin{corollary}\label{thm_gamma_low}
The pointwise infimum $\underline{G}_{\gamma}$
of $\GG_{\gamma}$ for any $\gamma\in[-1, 1]$ is given by reflecting pointwise supremum $\overline{G}_{-\gamma}$ with respect to either variable, i.e.
$$\underline{G}_{\gamma}(a, b) = a- \overline{G}_{-\gamma}(a, 1-b) = b - \overline{G}_{-\gamma}(1-a, b).$$
\end{corollary}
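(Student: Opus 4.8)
The plan is to obtain this corollary as an immediate specialization of Lemma \ref{lem:symm}(b), and the one thing I would check carefully at the outset is that part (b) is actually applicable here. Part (b) requires $\kappa$ to be a \emph{proper} measure of concordance, i.e.\ to satisfy Property (C4), and this is exactly the point where Gini's gamma differs from Spearman's footrule: as recorded in Section \ref{sec:prelim}, $\gamma$ satisfies (C1)--(C8) and in particular (C4), whereas $\phi$ fails (C4). This is precisely why the corollary is phrased for $\gamma$ and why the analogous shortcut was unavailable for the footrule, forcing the longer direct computation of $\underline{F}_\phi$ in Section \ref{sec:footrule}.

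Granting applicability, I would instantiate Lemma \ref{lem:symm}(b) with $\kappa=\gamma$. In the notation of that lemma we have $\KK_\gamma=\GG_\gamma$, so that $\underline{G}_\gamma=\underline{K}_\gamma$ and $\overline{G}_\gamma=\overline{K}_\gamma$. Reading the second identity of part (b) with the parameter $-\gamma$ in place of $k$ then yields
$$\underline{G}_{\gamma}(a,b)=\overline{K}_{-\gamma}^{\sigma_i}(a,b)=\overline{G}_{-\gamma}^{\sigma_i}(a,b)$$
for both $i=1$ and $i=2$. Thus the entire content of the corollary reduces to writing the two reflections $\overline{G}_{-\gamma}^{\sigma_1}$ and $\overline{G}_{-\gamma}^{\sigma_2}$ out explicitly.

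Finally I would simply unwind the definitions of the reflections from Section \ref{sec:prelim}, namely $F^{\sigma_1}(u,v)=v-F(1-u,v)$ and $F^{\sigma_2}(u,v)=u-F(u,1-v)$. Taking $F=\overline{G}_{-\gamma}$ gives
$$\overline{G}_{-\gamma}^{\sigma_2}(a,b)=a-\overline{G}_{-\gamma}(a,1-b)\quad\text{and}\quad\overline{G}_{-\gamma}^{\sigma_1}(a,b)=b-\overline{G}_{-\gamma}(1-a,b),$$
and combining these with the displayed identity produces exactly the two claimed expressions for $\underline{G}_\gamma(a,b)$. There is essentially no computational obstacle; the only substantive step is the verification in the first paragraph that (C4) holds for $\gamma$, which legitimizes the invocation of Lemma \ref{lem:symm}(b) and is exactly what is absent in the footrule case.
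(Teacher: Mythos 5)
Your proposal is correct and follows exactly the paper's route: the paper derives this corollary as "a direct consequence of Lemma \ref{lem:symm}(b)," which is precisely your instantiation with $\kappa=\gamma$ (legitimate since $\gamma$, unlike $\phi$, satisfies (C4)) followed by unwinding the definitions of $\sigma_1$ and $\sigma_2$. Your reading of the second identity of part (b) with $k=-\gamma$ and the resulting two formulas are both accurate, so there is nothing to add.
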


\begin{figure}[h]
            \includegraphics[width=6cm]{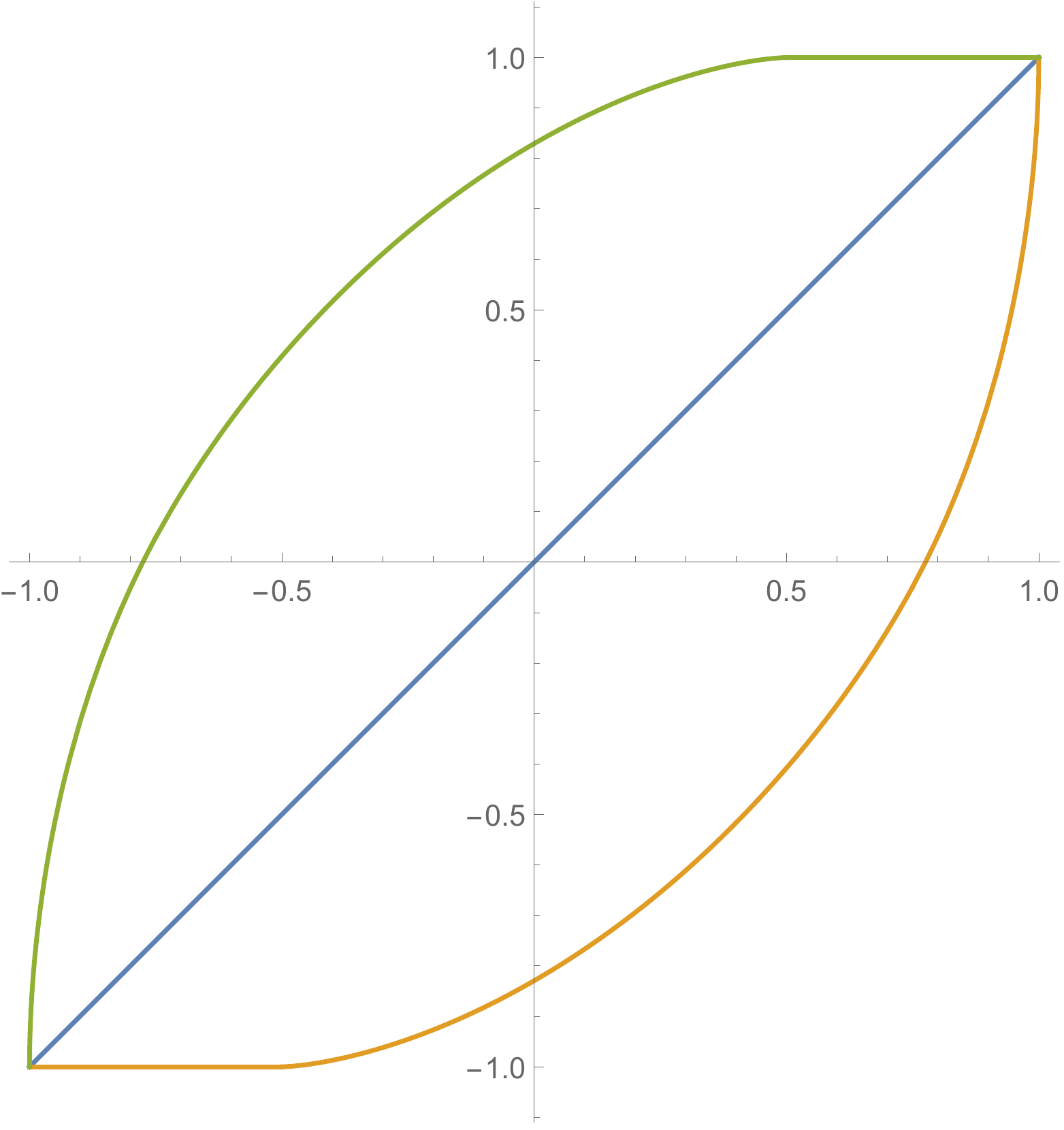}
            \caption{Graphs of values of $\gamma(\underline{G}_{\gamma})$ (orange) and $\gamma(\overline{G}_{\gamma})$ (green). } \label{gamma(gamma)}
\end{figure}

In  Figure \ref{g spodaj} we give the 3D plot of the quasicopulas $\overline{G}_{\gamma}$ for $\gamma = \frac{2}{13},  \frac35, \frac78,$ and in Figure \ref{g spodaj1} we give the scatterplots of the copulas $\overline{G}_{\gamma}$ for $\gamma = -\frac14, 0$.

Further observations then follow directly by Corollary \ref{cor gamma1}.

\begin{corollary}\label{cor gamma2}
Suppose that $\underline{G}_{\gamma}$ is the infimum given in Corollary \ref{thm_gamma_low}. Then:
\begin{enumerate}[(i)]
 \item We have $\underline{G}_{1}=M$ and $\underline{G}_{\gamma}=W$ for $\gamma \in [-1,-\frac12]$.
 \item For any $\gamma \in (0,1)$ the infimum $\underline{G}_{\gamma}$ is not a copula, but a proper quasi-copula.
 \item For any $\gamma\in (-\frac12,0]$ the infimum $\underline{G}_{\gamma}$ is a copula  that is different from Fr\'echet-Hoeffding lower and upper bounds $W$ and $M$. It is singular. Its absolutely continuous part is distributed inside the bounded region enclosed by the graphs of hyperbolas $\omega^5_{1-a,b}=1-a$ and $\omega^5_{a,1-b}=1-b$ (as functions of $a$ and $b$). Its singular  component is distributed on the boundary of the region and on the two segments of the anti-diagonal $a+b=1$ outside the region. (See Figure \ref{g spodaj1}.)
 \item $\underline{G}_{\gamma}$ is increasing in $\gamma$ (in the concordance order on quasi-copulas).
 \item $\underline{G}_{\gamma}$ is symmetric and radially symmetric. 
 \item If we extend the measure of concordance $\gamma$ to any quasi-copula $Q$ by \eqref{ext_gam}
     then 
     $\gamma\left(\underline{G}_{\gamma}\right)<\gamma$ for all $\gamma\in(-1,1)$. (See Figure \ref{gamma(gamma)}.)
 \end{enumerate}
\end{corollary}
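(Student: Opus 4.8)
The plan is to obtain every item of the corollary directly from the corresponding item of Corollary~\ref{cor gamma1} by pushing it through the reflection supplied by Corollary~\ref{thm_gamma_low}. The first thing to notice is that the two identities there say exactly that $\underline{G}_{\gamma}=(\overline{G}_{-\gamma})^{\sigma_2}=(\overline{G}_{-\gamma})^{\sigma_1}$, because $C^{\sigma_2}(a,b)=a-C(a,1-b)$ and $C^{\sigma_1}(a,b)=b-C(1-a,b)$. Write $\sigma:=\sigma_1$ for brevity. I would record three facts about $\sigma$ at the outset: it is an involution that carries copulas to copulas and quasi-copulas to quasi-copulas, so it preserves \emph{properness}; it reverses the pointwise order, since $C\leqslant D$ gives $D^{\sigma}\leqslant C^{\sigma}$; and $W^{\sigma}=M$, $M^{\sigma}=W$ (already used in the excerpt).

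With these in hand, items (i), (ii), (iv) and (v) are immediate. For (i), Corollary~\ref{cor gamma1}(i) gives $\overline{G}_{-1}=W$ and $\overline{G}_{\gamma}=M$ for $\gamma\in[\tfrac12,1]$; applying $\sigma$ yields $\underline{G}_{1}=W^{\sigma}=M$ and $\underline{G}_{\gamma}=M^{\sigma}=W$ for $-\gamma\in[\tfrac12,1]$, i.e.\ $\gamma\in[-1,-\tfrac12]$. For (ii), $-\gamma\in(-1,0)$ holds exactly when $\gamma\in(0,1)$, and there $\overline{G}_{-\gamma}$ is a proper quasi-copula, so its $\sigma$-reflection $\underline{G}_{\gamma}$ is one as well. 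For (iv), monotonicity of $\gamma\mapsto\overline{G}_{\gamma}$ together with the order-reversal of $\sigma$ and the sign flip $\gamma\mapsto-\gamma$ combine to give that $\gamma\mapsto\underline{G}_{\gamma}$ is increasing. Item (v) is just Lemma~\ref{lem:symm}(a) applied to $\kappa=\gamma$, a genuine measure of concordance.

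For (iii) and (vi) a little more care is needed. For (iii) the range $\gamma\in(-\tfrac12,0]$ corresponds to $-\gamma\in[0,\tfrac12)$, where Corollary~\ref{cor gamma1}(iii) describes $\overline{G}_{-\gamma}$ as a singular copula whose absolutely continuous part fills the region bounded by $\omega^5_{a,b}=a$ and $\omega^5_{a,b}=b$ and whose singular mass sits on that boundary together with the two off-region segments of the diagonal $a=b$. I would then track how $\sigma$ moves this picture: the substitution $a\mapsto 1-a$ turns the two defining hyperbolas into $\omega^5_{1-a,b}=1-a$ and $\omega^5_{a,1-b}=1-b$ and sends the diagonal $a=b$ to the anti-diagonal $a+b=1$, while $\sigma$ preserves the copula/singularity dichotomy, yielding exactly the description in (iii). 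For (vi) the one genuinely new computation is that the \emph{extended} $\gamma$ of \eqref{ext_gam} still satisfies $\gamma(Q^{\sigma})=-\gamma(Q)$; this follows by substituting $s=1-t$ in the two integrals $\int_0^1 Q^{\sigma}(t,t)\,dt$ and $\int_0^1 Q^{\sigma}(t,1-t)\,dt$ after expanding $Q^{\sigma}(a,b)=b-Q(1-a,b)$. Granting this, $\gamma(\underline{G}_{\gamma})=-\gamma(\overline{G}_{-\gamma})$, and Corollary~\ref{cor gamma1}(vi) applied at $-\gamma$ gives $\gamma(\overline{G}_{-\gamma})>-\gamma$, whence $\gamma(\underline{G}_{\gamma})<\gamma$ for all $\gamma\in(-1,1)$.

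The only step that is not a one-line transfer is the geometric bookkeeping in (iii): one has to verify that reflecting the explicit region and its singular boundary produces precisely the hyperbolas and the anti-diagonal segments named in the statement, rather than merely an unidentified order-reversing copy. Everything else reduces to the three formal properties of $\sigma$ and the single integral identity $\gamma(Q^{\sigma})=-\gamma(Q)$ for the extended functional.
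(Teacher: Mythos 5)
Your strategy is the same as the paper's: the paper gives no argument beyond the remark that these observations ``follow directly by Corollary \ref{cor gamma1}'' through the reflection of Corollary \ref{thm_gamma_low}, and your explicit transfer is a correct filling-in of the details for items (i), (ii), (iv), (v) and (vi). In particular, the three formal properties of $\sigma_1$ that you isolate (an involution carrying copulas to copulas and quasi-copulas to quasi-copulas, hence preserving properness; order reversal; $W^{\sigma_1}=M$ and $M^{\sigma_1}=W$) and the substitution identity $\gamma(Q^{\sigma_1})=-\gamma(Q)$ for the extended functional \eqref{ext_gam} are all valid and yield exactly those five items.

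The gap is in item (iii), precisely the step you flagged as the only nontrivial bookkeeping. Under the mass reflection $(a,b)\mapsto(1-a,b)$, the hyperbola $\omega^5_{a,b}=a$ does go to $\omega^5_{1-a,b}=1-a$, but the hyperbola $\omega^5_{a,b}=b$ goes to $\omega^5_{1-a,b}=b$, \emph{not} to $\omega^5_{a,1-b}=1-b$ as you claim. Indeed, write $P(u,v)$ for the radicand of $\omega^5_{u,v}$ at the relevant parameter value; the elementary identity $(2a-1)(1-2b)=(1-2a)(2b-1)$ gives $P(1-a,b)=P(a,1-b)$, and then the equations $\omega^5_{1-a,b}=1-a$ and $\omega^5_{a,1-b}=1-b$ both reduce to the \emph{same} equation $\frac{\sqrt{3}}{3}\sqrt{P(1-a,b)}=2-a-b$, so the two curves named in the statement are one and the same hyperbola. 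The image of $\omega^5_{a,b}=b$ is instead the distinct curve $\frac{\sqrt{3}}{3}\sqrt{P(1-a,b)}=a+b$ (equivalently, $\omega^5_{a,1-b}=a$), which meets the first curve in two points of the anti-diagonal; the support of $\underline{G}_{\gamma}$'s absolutely continuous part is the lens enclosed by these \emph{two distinct} curves, and a single curve encloses nothing. So your assertion that the reflection ``yields exactly the description in (iii)'' cannot be verified as written: what it actually yields is the region bounded by $\omega^5_{1-a,b}=1-a$ and $\omega^5_{1-a,b}=b$. (This redundancy is present in the paper's own formulation of (iii), so you have faithfully reproduced what appears to be a typo rather than detected it; a careful proof of (iii) would either name the correct pair of curves or point out the discrepancy.)
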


\section{Comparison of local bounds}

In this section we give a comparison of effectiveness of bounds for Spearman's footrule and Gini's gamma and compare them with respect to the same bounds for Kendall's tau, Spearman's rho in Blomqvist's beta given by Nelsen and \'Ubeda-Flores in \cite{NeUbFl}.

Suppose that $\kappa:\mathcal{C}\to[-1,1]$ is a given measure of concordance, $k$ a fixed value in the range of $\kappa$ and that $\underline{K}_{k}$ {and} $\overline{K}_{k}$ are the lower and the upper bound of \eqref{eq:kappa}, respectively. Then the effectiveness of $\kappa$ is measured by the function \cite{NeUbFl}
\begin{equation}
m_{\kappa}(k)=1-6\int\int_{\II^2} \left|\overline{K}_{k}-\underline{K}_{k}\right|\,du\, dv.
\end{equation}
Here, the double integral represents the volume between the upper and lower local bound, and it is scaled so that $m_{\kappa}(k)=0$ means there is no improvement on the bound, i. e., $\overline{K}_{k}=M$ and $\underline{K}_{k}=W$, and $m_{\kappa}(k)=1$ means that the two bounds coincide. In the two tables below we give the values for $m_{\kappa}(k)$ for $\kappa=\phi$ and $\kappa=\gamma$. For Gini's gamma the effectiveness function $m_{\gamma}$ is an even function by Corollary \ref{thm_gamma_low}, so we give its values for $k\in[0,1]$ only. Their graphs are presented in Figure \ref{ff spodaj1}.
\vskip 15pt
\begin{center}
\begin{tabular}{|c|c|}
\hline
\parbox[c]{20mm}{\centering value of $k$} &
\parbox[c]{20mm}{\centering $m_{\phi}(k)$}  \\
\hline\hline
-0.5 & 0.7500\\
-0.4 &
  0.3718\\ -0.3&
  0.2244\\ -0.2&
  0.1352\\ -0.1&
  0.0820\\ 0.0&
  0.0574\\ 0.1&
  0.0569\\ 0.2&
  0.0763\\ 0.3&
  0.1108\\ 0.4&
  0.1562\\ 0.5&
  0.2146\\ 0.6&
  0.2895\\ 0.7&
  0.3860\\ 0.8&
  0.5130\\ 0.9&
  0.6889\\ 1.0&
  1.0000 \\
\hline
\end{tabular}
\quad\quad\quad
\begin{tabular}{|c|c|}
\hline
\parbox[c]{20mm}{\centering value of $k$} &
\parbox[c]{20mm}{\centering $m_{\gamma}(k)$} \\
\hline\hline
0.0& 0.0581\\
0.1& 0.0633\\
0.2& 0.0792\\
0.3& 0.1059 \\
0.4& 0.1438\\
0.5& 0.1942\\
0.6& 0.2587\\
0.7& 0.3422 \\
0.8& 0.4565\\
0.9& 0.6320\\
1.0& 1.0000\\
\hline
\end{tabular}
\vskip 10pt
{\sc Table 1.} Values of the effectiveness function for \\ Spearman's footrule (left) and Gini's gamma (right).
\end{center}

\vskip 15pt

A comparison of values in Table 1 with those given in \cite[Table 1]{NeUbFl} shows that Spearman's footrule and Gini's gamma have higher values of the effectiveness function, so their bounds are stricter, as compared to Spearman's rho and Kendall's tau. Blomqvist's beta, however, has highest values for all $k$ considered except for values of $k$ very close to $1$.

\vskip 15pt
\begin{figure}[h]
            \includegraphics[width=4.8cm]{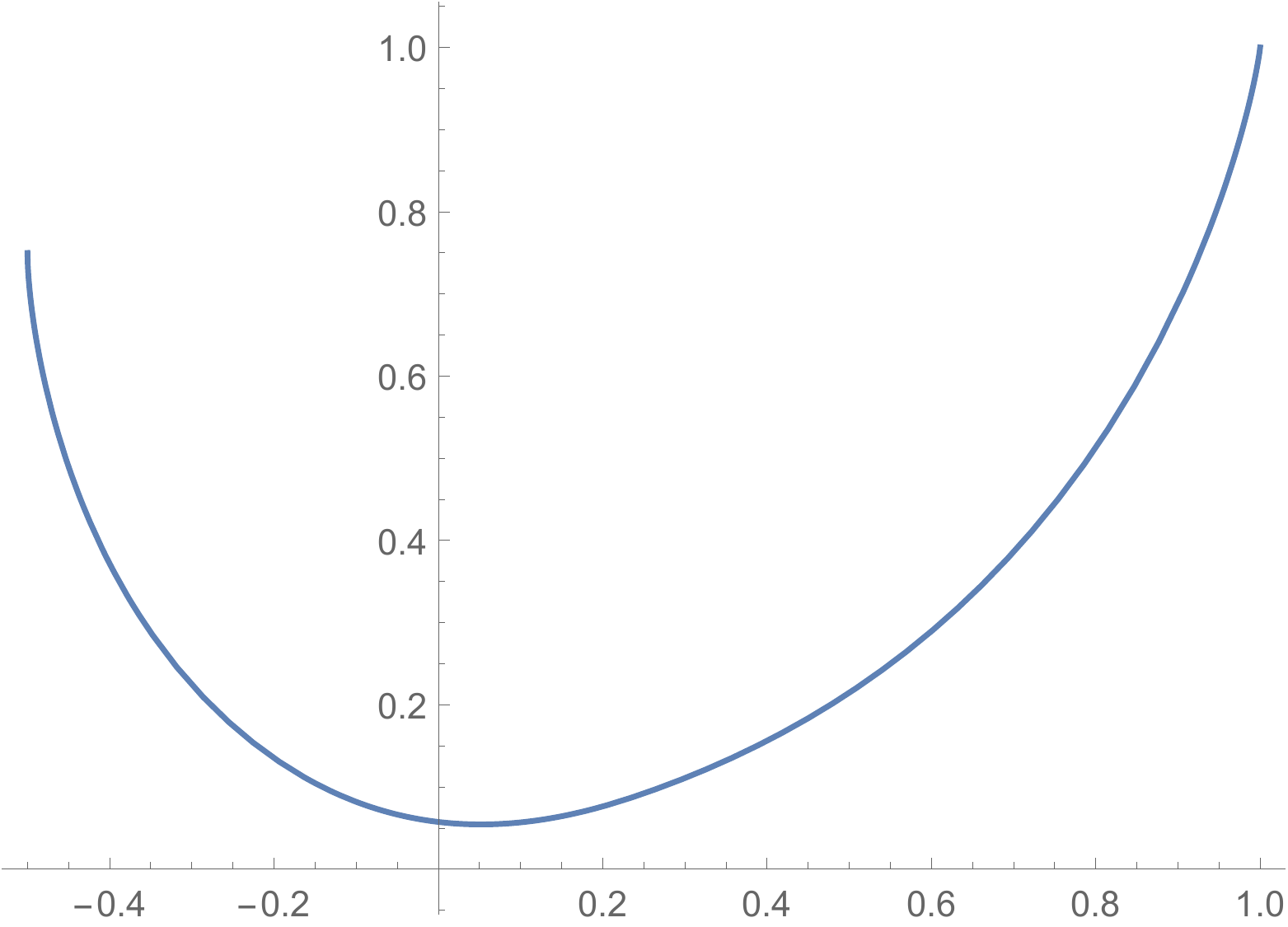} \hfil  \includegraphics[width=6cm]{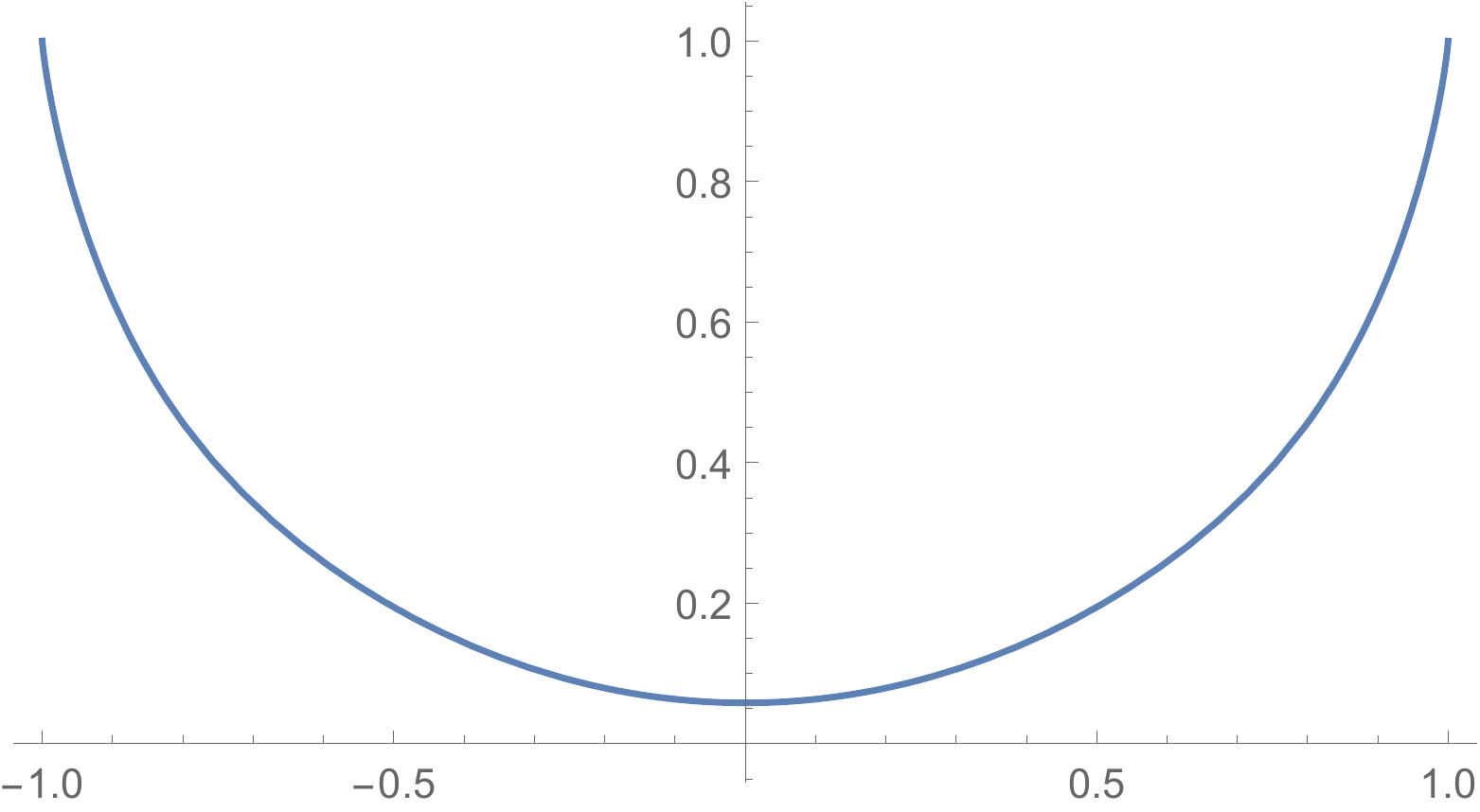}
            \caption{Graphs of effectiveness functions of Spearman's footrule (left) and Gini's gamma (right).} \label{ff spodaj1}
\end{figure}

\section{Spearman's footrule and Gini's gamma vs. Blomqvist's beta}\label{sec:beta}

In this section we give the exact regions of possible pairs of values $\left(\kappa(C),\beta(C)\right)$, $C\in \mC$, where $\kappa$ is either Spearman's footrule or Gini's gamma. Historically, a possible region of pairs of values of a pair of measure of concordance was studied first for Spearman's rho and Kendall's tau. In 1950s it was considered by Daniels, Durbin and Stuart, and Kruskal \cite{Dani,DuSt,Krus} (see also \cite[\S5.1.3]{Nels}), and later by other authors e. g. in \cite{Dani,DuSt,FrNe,GeNe2,Krus}. The exact region of possible pairs of values $(\rho(C),\tau(C))$ was given recently in \cite{ScPaTr}. For other pairs of measures of concordance we are aware only of the exact regions of possible pairs $\left(\kappa(C),\beta(C)\right)$ for $\kappa\in\{\rho,\tau,\gamma\}$ that are stated by Nelsen as Exercise 5.17 of \cite{Nels} with a hint of a proof. Our proof for the exact region between Gini's gamma and Blomqvist's beta uses results of our previous section and it is different from the one suggested in \cite{Nels}. The exact region for Spearman's footrule and Blomqvist's beta seems to be new.

\begin{theorem}\label{thm:phi} Let $C$ be any copula.
   \begin{enumerate}[(a)]
   \item If $\phi(C)=\phi$ for some $\phi\in\left[-\dfrac12,1\right]$, then
   \[
     1-2\sqrt{\frac{2}{3}(1-\phi)}\
     \leqslant \beta(C)\leqslant
     \begin{cases}       -1+2\sqrt{\frac23\left(1+2\phi\right)}, &
\mbox{if } -\dfrac12\leqslant\phi\leqslant \dfrac14\\       1, &
\mbox{if } \dfrac14\leqslant\phi\leqslant 1.     \end{cases}
   \]
     \item If $\beta(C)=\beta$ for some $\beta\in\left[-1, 1\right]$, then
   \[
     \dfrac{3(1+\beta)^2}{16}-\dfrac12\
     \leqslant \phi(C)\leqslant
     1-\dfrac{3(1-\beta)^2}{8} .
   \]
   \end{enumerate}
     The bounds are attained.
\end{theorem}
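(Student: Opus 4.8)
The plan is to exploit that Blomqvist's beta sees a copula only at its center: by \eqref{beta} one has $\beta(C)=4C(\tfrac12,\tfrac12)-1$, so with $d:=C(\tfrac12,\tfrac12)$ the whole problem is equivalent to describing which pairs $\bigl(\phi(C),d\bigr)$ occur, and this is controlled by the extremal copulas with a prescribed value at $(\tfrac12,\tfrac12)$ from Section~\ref{sec:local}. Since $W(\tfrac12,\tfrac12)=0$ and $M(\tfrac12,\tfrac12)=\tfrac12$, every copula $C$ with $C(\tfrac12,\tfrac12)=d$ obeys $\underline{C}^{(1/2,1/2)}_{d}\leqslant C\leqslant \overline{C}^{(1/2,1/2)}_{1/2-d}$ by \cite[Theorem 3.2.3]{Nels}. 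Because $\phi$ is monotone, applying it to these bounds and reading off their footrule from \eqref{f underline phi} and \eqref{f_ab_dep_on_d_2} at $(a,b)=(\tfrac12,\tfrac12)$, where $\underline{f}_{1/2,1/2}(d)=3d^{2}-\tfrac12$ and $\overline{f}_{1/2,1/2}(d)=1-6(\tfrac12-d)^{2}$, yields the master inequality
\begin{equation*}
3d^{2}-\tfrac12\ \leqslant\ \phi(C)\ \leqslant\ 1-6\bigl(\tfrac12-d\bigr)^{2}.
\end{equation*}

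For part (b) I would substitute $d=\tfrac{1+\beta}{4}$, so that $\tfrac12-d=\tfrac{1-\beta}{4}$; the two halves of the master inequality become precisely $\phi(C)\geqslant\tfrac{3(1+\beta)^{2}}{16}-\tfrac12$ and $\phi(C)\leqslant 1-\tfrac{3(1-\beta)^{2}}{8}$. For part (a) I would instead read the master inequality as two constraints on $d$ for a fixed $\phi$: the inequality $3d^{2}-\tfrac12\leqslant\phi$ together with $0\leqslant d\leqslant\tfrac12$ forces $d\leqslant\sqrt{(1+2\phi)/6}$, while $\phi\leqslant 1-6(\tfrac12-d)^{2}$ forces $d\geqslant\tfrac12-\sqrt{(1-\phi)/6}$; translating back through $\beta=4d-1$ reproduces the two bounds of (a). The single genuine case distinction surfaces here: as soon as $\phi\geqslant\tfrac14$ the threshold $\sqrt{(1+2\phi)/6}$ exceeds $\tfrac12$, so the active constraint becomes $d\leqslant M(\tfrac12,\tfrac12)=\tfrac12$ and the upper bound on $\beta$ saturates at $1$, which is the origin of the branch in (a). As a cross-check one may instead evaluate the global local bounds, using Theorem~\ref{thm_phi_low} to get $\underline{F}_{\phi}(\tfrac12,\tfrac12)=\tfrac12\bigl(1-\sqrt{\tfrac23(1-\phi)}\bigr)$ and, via Corollary~\ref{cor phi2} (the center lies in $\Delta_{\phi}^{4}$ for $\phi<\tfrac14$ and $\overline{F}_{\phi}=M$ for $\phi\geqslant\tfrac14$), Theorem~\ref{thm_phi_upp1} to get $\overline{F}_{\phi}(\tfrac12,\tfrac12)=\tfrac12\sqrt{\tfrac23(1+2\phi)}$.

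Attainment requires no compactness argument, which is the pleasant feature of this route. The extremal copulas $\underline{C}^{(1/2,1/2)}_{c}$ and $\overline{C}^{(1/2,1/2)}_{c}$ are shuffles of $M$, hence genuine copulas with center value $c$, respectively $\tfrac12-c$, and footrule equal to the relevant endpoint. For fixed $\phi\leqslant\tfrac14$ the copula $\underline{C}^{(1/2,1/2)}_{\sqrt{(1+2\phi)/6}}$ realizes the largest admissible $\beta$ and $\overline{C}^{(1/2,1/2)}_{\sqrt{(1-\phi)/6}}$ the smallest; the same two shuffles, reindexed by $d=\tfrac{1+\beta}{4}$, realize the extreme values of $\phi$ in part (b). The only endpoint needing a separate construction is the saturated value $\beta=1$ for $\phi\in(\tfrac14,1)$: there I would take a convex combination $\lambda M+(1-\lambda)\underline{C}^{(1/2,1/2)}_{1/2}$, which keeps the center value at $\tfrac12$ (so $\beta=1$) while its footrule $\lambda+(1-\lambda)\tfrac14$ sweeps all of $[\tfrac14,1]$.

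I expect the work to be bookkeeping rather than conceptual. The main points to pin down are that at $(\tfrac12,\tfrac12)$, which lies on the common boundary $a=b$, $a+b=1$ of the cases in \eqref{f underline phi}, the piecewise footrule formula does collapse to $3d^{2}-\tfrac12$ (the branch $b\leqslant a+d$ is the relevant one for $d>0$), and that the $\phi=\tfrac14$ transition and the degenerate endpoints behave correctly: at $\beta=-1$ both bounds of (b) collapse to $\phi=-\tfrac12$, and at $\phi=-\tfrac12$ both bounds of (a) collapse to $\beta=-1$, as they must.
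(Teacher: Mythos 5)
Your proposal is correct, and its core reduction is the same as the paper's: Blomqvist's beta only sees $C(\tfrac12,\tfrac12)$, so everything comes down to sandwiching $\phi(C)$ between the footrules of the extremal copulas $\underline{C}^{(1/2,1/2)}_{c_1}$ and $\overline{C}^{(1/2,1/2)}_{c_2}$, with attainment by these shuffles of $M$. The difference is in what you invoke: the paper's proof cites its two main local-bound theorems (Theorems \ref{thm_phi_low} and \ref{thm_phi_upp1}) and simply evaluates $\underline{F}_{\phi}$ and $\overline{F}_{\phi}$ at the center --- exactly your ``cross-check'' --- whereas your main route bypasses those theorems entirely, deriving the master inequality $3d^{2}-\tfrac12\leqslant\phi(C)\leqslant 1-6(\tfrac12-d)^{2}$ directly from \cite[Theorem 3.2.3]{Nels}, monotonicity of $\phi$, and Proposition \ref{prop1} specialized to $(a,b)=(\tfrac12,\tfrac12)$. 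This makes your argument self-contained relative to the preliminaries, and it lets you prove (b) by direct substitution $d=\tfrac{1+\beta}{4}$ rather than by inverting the functions in (a). One genuine improvement: the paper's attainment claim (``the bounds are attained by $\overline{C}^{(1/2,1/2)}_{c_2}$ and $\underline{C}^{(1/2,1/2)}_{c_1}$'') does not literally cover the saturated branch $\beta=1$ with $\phi\in(\tfrac14,1)$, since $\underline{C}^{(1/2,1/2)}_{1/2}$ has footrule $\tfrac14$, not $\phi$; your convex combination $\lambda M+(1-\lambda)\underline{C}^{(1/2,1/2)}_{1/2}$, whose footrule $\tfrac14+\tfrac34\lambda$ sweeps $[\tfrac14,1]$ while the center value stays at $\tfrac12$, supplies exactly the missing construction.
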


\begin{proof}
Suppose that $\phi(C)=\phi$. Then $\underline{F}_{\phi}(\frac12,
\frac12) \leqslant C(\frac12, \frac12)\leqslant
\overline{F}_{\phi}(\frac12, \frac12)$.
 From Theorem \ref{thm_gamma_low} we have $\underline{F}_{\phi}(\frac12,
\frac12) = \frac12\left(1-\sqrt{\frac{2}{3}(1-\phi)}\right)$. Since for
$\phi \leqslant \frac14$ the point $(\frac12, \frac12)$ lies in the area
$\Delta_{\phi}^4$ we have $\overline{F}_{\phi}(\frac12, \frac12) =
\delta_{\frac12, \frac12}^4(\phi) =
\frac12\sqrt{\frac23\left(1+2\phi\right)}$. For $\phi \geqslant \frac14$
we have $\overline{F}_{\phi}(\frac12, \frac12)= M(\frac12, \frac12) =
\frac12$. Now, $\beta(C) = 4C(\frac12, \frac12) - 1$ gives us point (a).
The bounds are attained by copulas $\overline{C}^{(\frac12,
\frac12)}_{c_2}$ and $\underline{C}^{(\frac12, \frac12)}_{c_1}$,
respectively.  Next, point (b) is obtained from (a) by inverting the
functions.
\end{proof}

In Figure \ref{fig3} we display the set of all possible  pairs
$(\phi(C), \beta(C))$ for a copula $C$. The expressions for the bounds
of the shaded regions are given in Theorem \ref{thm:phi}.

\begin{figure}[h]
             \includegraphics[width=5cm]{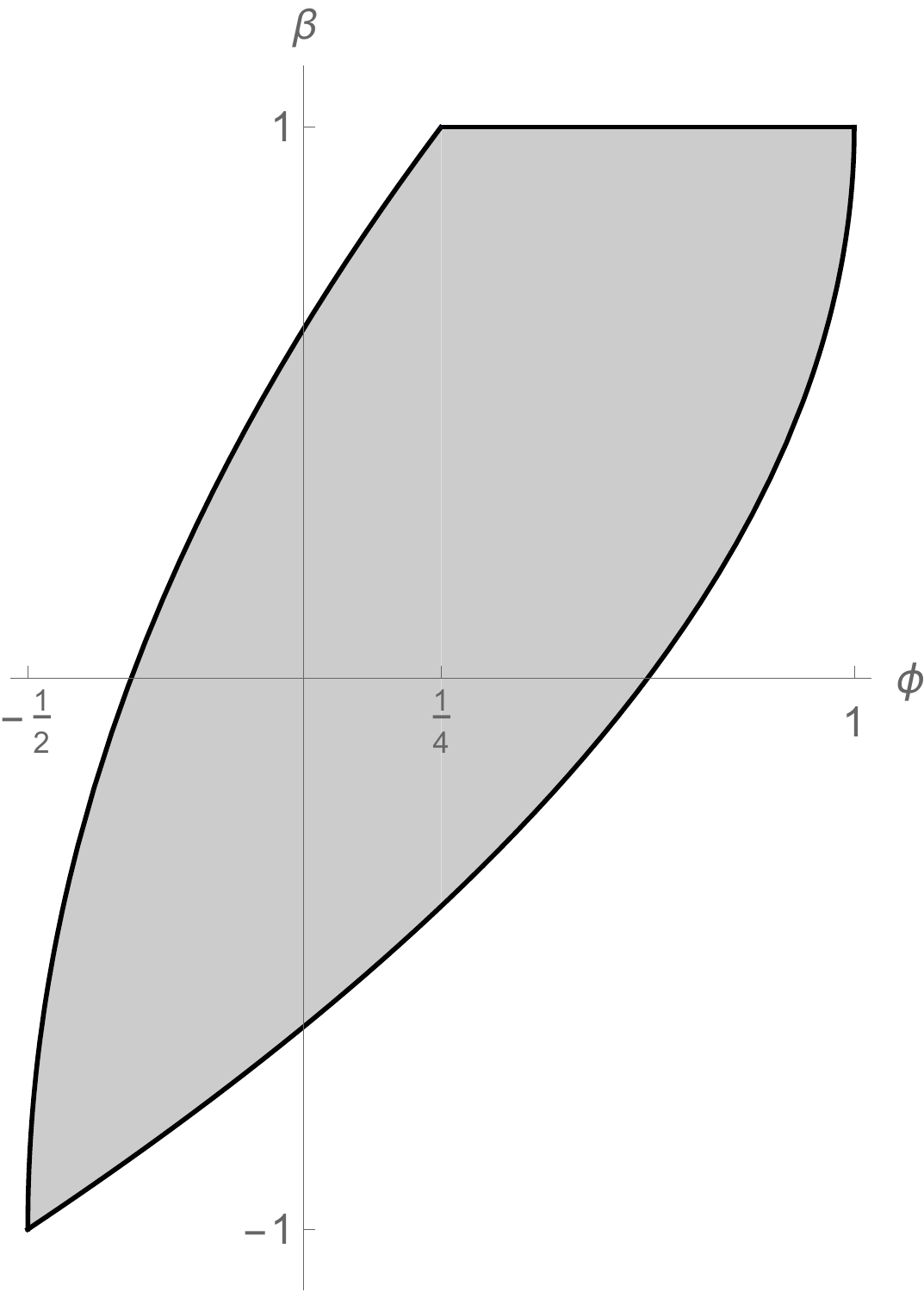} \hfil
\includegraphics[width=6.7cm]{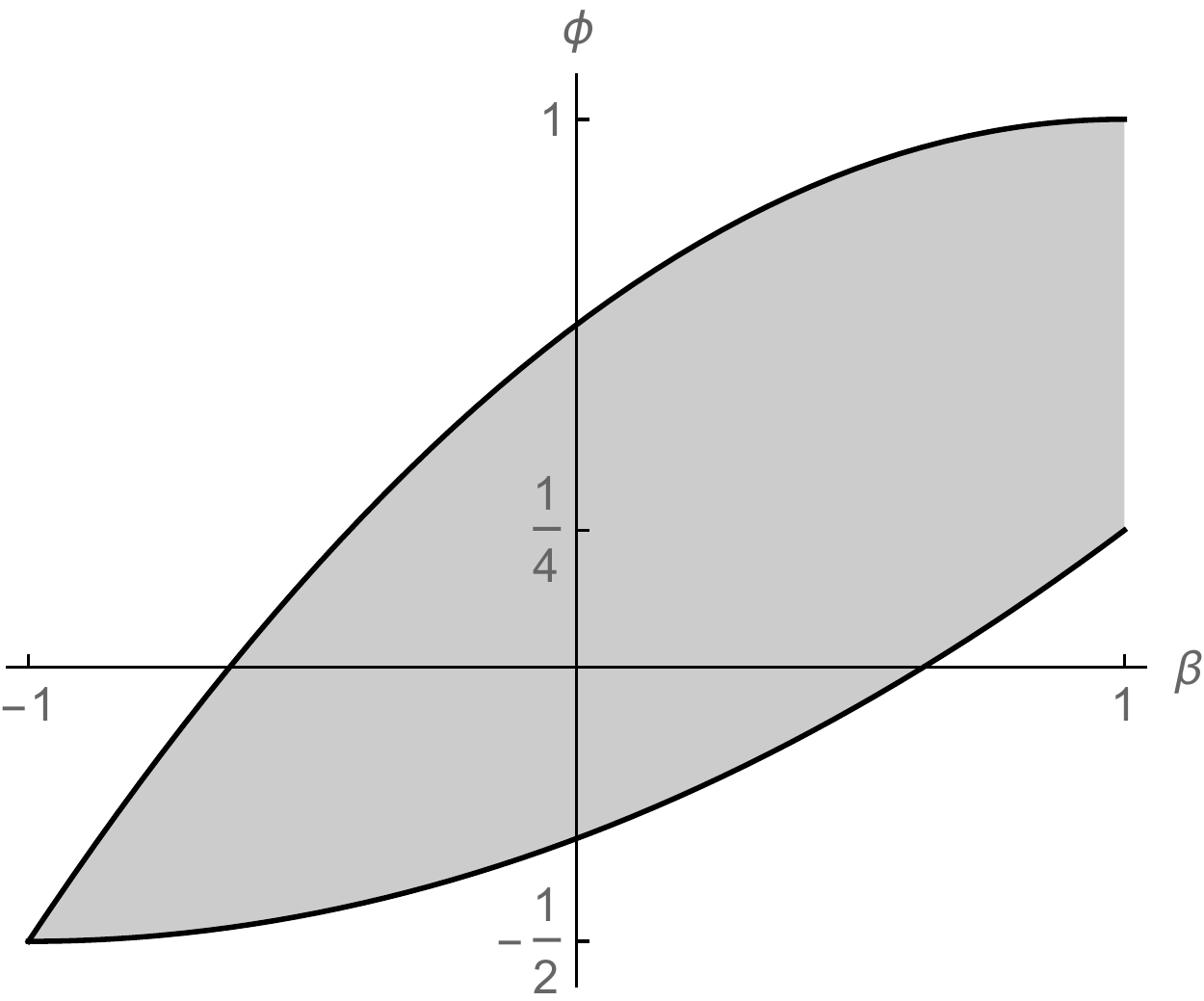}
             \caption{ Spearman's footrule vs.\ Blomqvist's beta
}\label{fig3}
\end{figure}

\begin{theorem}\label{thm:gamma} Let $C$ be any copula.
   \begin{enumerate}[(a)]
   \item If $\gamma(C)=\gamma$ for some $\gamma\in\left[-1, 1\right]$, then
   \[
     \left.\begin{array}{ll}
       -1, & \mbox{if } -1\leqslant\gamma\leqslant-\dfrac12 \\
       1-2\sqrt{\frac23(1-\gamma)}, & \mbox{if }
-\dfrac12\leqslant\gamma\leqslant1
     \end{array}\right\}
     \leqslant \beta(C)\leqslant
     \begin{cases}     -1+2\sqrt{\frac23(1+\gamma)}, & \mbox{if }
-1\leqslant\gamma\leqslant\dfrac12 \\  1, & \mbox{if }
\dfrac12\leqslant\gamma\leqslant1 .     \end{cases}
   \]
     \item If $\beta(C)=\beta$ for some $\beta\in\left[-1, 1\right]$, then
   \[
     \dfrac{3(1+\beta)^2}{8}-1\
     \leqslant \gamma(C)\leqslant
     1-\dfrac{3(1-\beta)^2}{8} .
   \]
   \end{enumerate}
     The bounds are attained.
\end{theorem}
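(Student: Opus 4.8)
The plan is to mirror the proof of Theorem~\ref{thm:phi}, exploiting the fact that Blomqvist's beta sees a copula only through its value at the center of the square, namely $\beta(C)=4C\left(\frac12,\frac12\right)-1$. Since the hypothesis $\gamma(C)=\gamma$ places $C$ inside the local bounds, so that $\underline{G}_{\gamma}\leqslant C\leqslant\overline{G}_{\gamma}$ pointwise by Theorem~\ref{thm_gamma_upp} and Corollary~\ref{thm_gamma_low}, evaluating this chain at $\left(\frac12,\frac12\right)$ gives
\[
\underline{G}_{\gamma}\left(\tfrac12,\tfrac12\right)\leqslant C\left(\tfrac12,\tfrac12\right)\leqslant\overline{G}_{\gamma}\left(\tfrac12,\tfrac12\right),
\]
and part (a) then follows by the affine substitution $\beta=4C\left(\frac12,\frac12\right)-1$. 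Thus the whole of (a) reduces to evaluating the two bounds at the center point.

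For the upper bound I would first observe that, by Corollary~\ref{cor gamma1}, for $\gamma<\frac12$ the only surviving region is $\Omega^5_{\gamma}$, and that it contains the center $\left(\frac12,\frac12\right)$. There $\overline{G}_{\gamma}(a,b)=\omega^5_{a,b}(\gamma)$, and substituting $a=b=\frac12$ (so that $a+b-1=0$ and $(1-2a)(1-2b)=0$) collapses the radical to $\omega^5_{\frac12,\frac12}(\gamma)=\frac12\sqrt{\frac23(1+\gamma)}$, which yields the stated upper bound $-1+2\sqrt{\frac23(1+\gamma)}$ on $\beta(C)$. For $\gamma\geqslant\frac12$ all regions have vanished and $\overline{G}_{\gamma}=M$, so $\overline{G}_{\gamma}\left(\frac12,\frac12\right)=\frac12$ and the bound is simply $\beta(C)\leqslant1$.

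The lower bound I would read off from the reflection identity $\underline{G}_{\gamma}\left(\frac12,\frac12\right)=\frac12-\overline{G}_{-\gamma}\left(\frac12,\frac12\right)$ of Corollary~\ref{thm_gamma_low}, reusing the center values just found with $\gamma$ replaced by $-\gamma$: for $\gamma>-\frac12$ this gives $\underline{G}_{\gamma}\left(\frac12,\frac12\right)=\frac12\bigl(1-\sqrt{\frac23(1-\gamma)}\bigr)$ and hence $\beta(C)\geqslant1-2\sqrt{\frac23(1-\gamma)}$, while for $\gamma\leqslant-\frac12$ it gives $\underline{G}_{\gamma}\left(\frac12,\frac12\right)=0$ and $\beta(C)\geqslant-1$. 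Attainment is witnessed by the extremal copulas $\overline{C}^{(\frac12,\frac12)}_{c_2}$ and $\underline{C}^{(\frac12,\frac12)}_{c_1}$, exactly as in Theorem~\ref{thm:phi}. Part (b) is then pure algebra: inverting $\beta=-1+2\sqrt{\frac23(1+\gamma)}$ gives $\gamma=\frac{3(1+\beta)^2}{8}-1$ and inverting $\beta=1-2\sqrt{\frac23(1-\gamma)}$ gives $\gamma=1-\frac{3(1-\beta)^2}{8}$, the two bounds of (b). The only step demanding genuine care rather than bookkeeping is confirming the region membership $\left(\frac12,\frac12\right)\in\Omega^5_{\gamma}$ together with the diagonal-endpoint evaluation that justifies it; once that is settled, the remaining manipulations are routine.
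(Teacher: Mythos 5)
Your proposal is correct and follows essentially the same route as the paper's own proof: evaluate the local bounds $\underline{G}_{\gamma}$ and $\overline{G}_{\gamma}$ at $\left(\frac12,\frac12\right)$ (the former via the reflection identity of Corollary~\ref{thm_gamma_low}), convert through $\beta(C)=4C\left(\frac12,\frac12\right)-1$, witness attainment by $\overline{C}^{(\frac12,\frac12)}_{c_2}$ and $\underline{C}^{(\frac12,\frac12)}_{c_1}$, and invert the resulting functions to obtain (b). One small slip that does not affect the argument: for $\gamma\leqslant-\frac{4}{13}$ the region $\Omega^5_{\gamma}$ is \emph{not} the only nonempty one (for instance all nine regions are nonempty when $\gamma\in[-1,-\frac34]$); what the proof actually needs, and what you correctly single out as the step requiring care, is only the membership $\left(\frac12,\frac12\right)\in\Omega^5_{\gamma}$ for $-1<\gamma\leqslant\frac12$, which is exactly what the paper invokes.
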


\begin{proof}
Suppose that $\gamma(C)=\gamma$. Then $\underline{G}_{\gamma}(\frac12,
\frac12) \leqslant C(\frac12, \frac12)\leqslant
\overline{G}_{\gamma}(\frac12, \frac12)$.
For $\gamma \leqslant \frac12$ the point $(\frac12, \frac12)$ lies in
the area $\Omega_{\gamma}^5$ so we have $\overline{G}_{\gamma}(\frac12,
\frac12) = \omega_{\frac12, \frac12}^5(\gamma) =
\frac12\sqrt{\frac23(1+\gamma)}$. For $\gamma \geqslant \frac12$ we have
$\overline{G}_{\gamma}(\frac12, \frac12)= M(\frac12, \frac12) =
\frac12$. Next, $\underline{G}_{\gamma}(\frac12, \frac12) = \frac12 -
\overline{G}_{-\gamma}(\frac12, \frac12) =
\frac12\left(1-\sqrt{\frac23(1-\gamma)}\right)$ for $\gamma \geqslant
-\frac12$ and $\underline{G}_{\gamma}(\frac12, \frac12) = 0$ otherwise.
Now, $\beta(C) = 4C(\frac12, \frac12) - 1$ gives us point (a). The
bounds are attained by copulas $\overline{C}^{(\frac12, \frac12)}_{c_2}$
and $\underline{C}^{(\frac12, \frac12)}_{c_1}$, respectively.  Finally,
point (b) is obtained from (a) by inverting the functions.
\end{proof}

In Figure \ref{fig4} we display the set of all possible  pairs
$(\gamma(C), \beta(C))$ for a copula $C$. The expressions for the bounds
of the shaded regions are given in Theorem \ref{thm:gamma}.

\begin{figure}[h]
             \includegraphics[width=5cm]{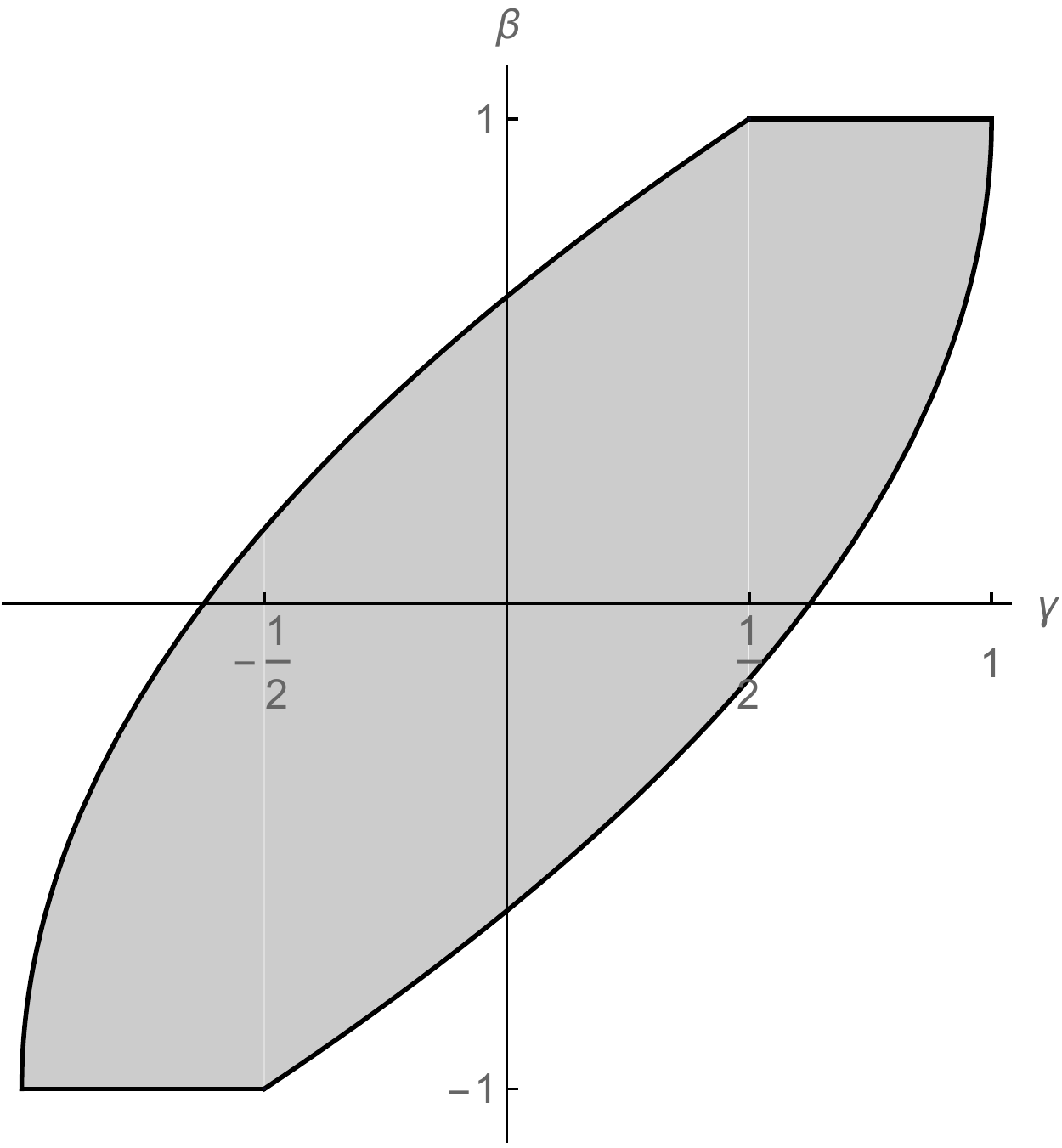} \hfil
\includegraphics[width=5cm]{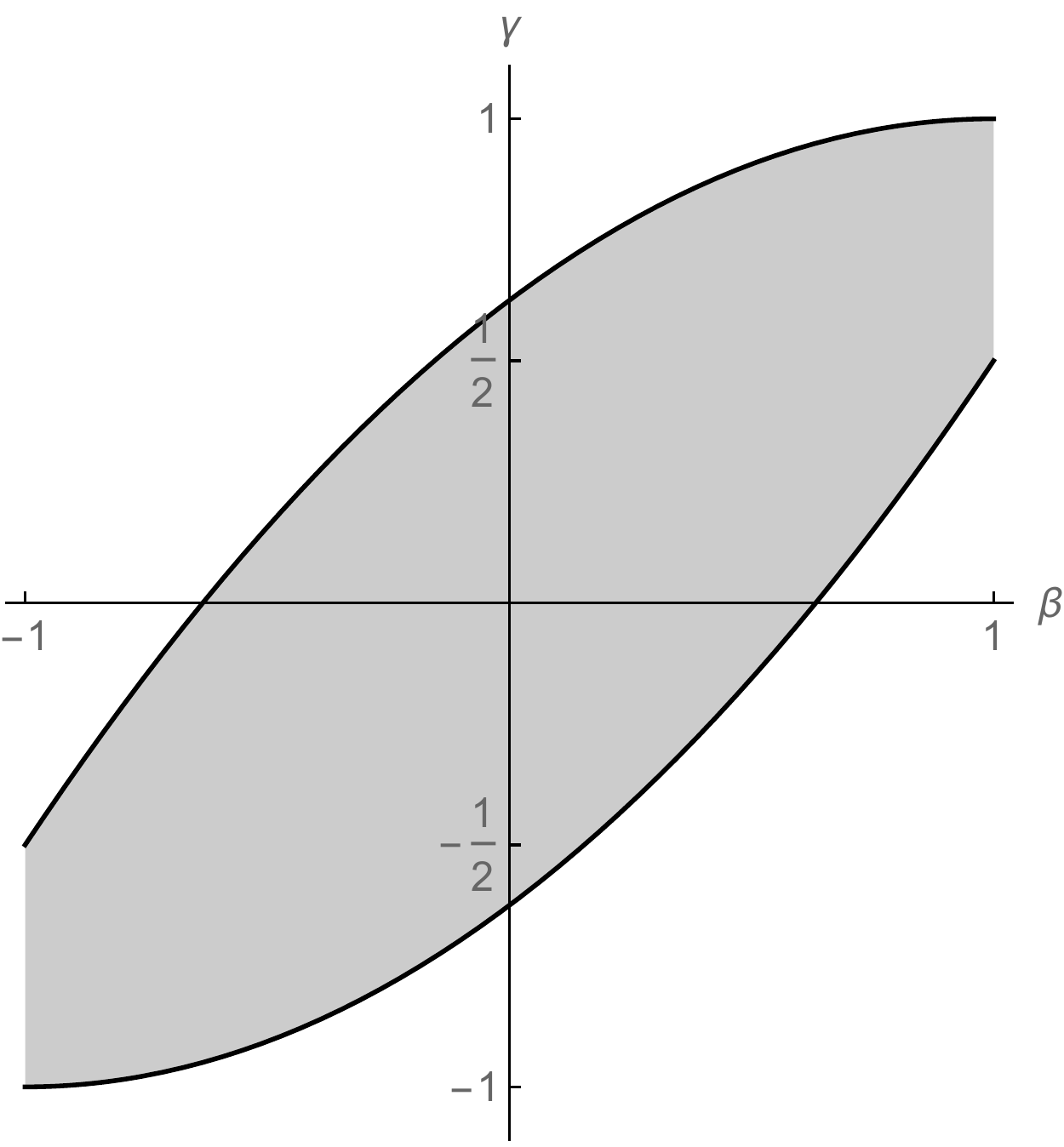}
             \caption{ Gini's gamma vs.\ Blomqvist's beta }\label{fig4}
\end{figure}

\noindent\textbf{Acknowledgement.} The authors are thankful to the referees. Their suggestions helped us to improve the paper. The figures in the paper were drawn using the Mathematica software \cite{Mathematica}.

\end{document}